\IfFileExists{./prepreamble-amspreprint.sty}{\RequirePackage[packages,theorems,changes]{prepreamble-amspreprint}}{}

\documentclass[english]{amsart}

\RequirePackage{biblatex}
\RequirePackage{ltxcmds}
\IfFileExists{./preamble-amspreprint.sty}{\RequirePackage[packages,theorems,changes]{preamble-amspreprint}}{}

\usepackage{manuscript}
\usepackage{KMS-local}
\usepackage{scoop-packages}
\usepackage{scoop-semantic}
\usepackage{changes}
\usepackage{KMS-standard-packages}

\IfFileExists{./postpreamble-amspreprint.sty}{\RequirePackage[packages,theorems,changes]{postpreamble-amspreprint}}{}

\makeatletter
\@ifpackageloaded{changes}{
\definechangesauthor[name = {Fei Chen}, color = {red!80!black}]{FC}
\definechangesauthor[name = {Kirk M Soodhalter}, color = {blue!80!black}]{KMS}
}{}
\makeatother        

\makeatletter
\@ifpackageloaded{hyperref}{%
	\hypersetup{
		pdftitle = {Constraints on admissible behavior of \gmres applied to tridiagonal Toeplitz systems},
		pdfauthor = {Fei Chen, Kirk M Soodhalter},
		pdfkeywords = {Toeplitz matrices, GMRES, Krylov, Admissible convergence behavior},
		pdfcreator = {Created using the Scoop Template Engine version 1.6.0.}
	}
}{
	\pdfinfo{
		/Title (Constraints on admissible behavior of \gmres applied to tridiagonal Toeplitz systems)
		/Author (Fei Chen, Kirk M Soodhalter)
		/Subject ()
		/Keywords (Toeplitz matrices, GMRES, Krylov, Admissible convergence behavior)
		/Creator (Created using the Scoop Template Engine version 1.6.0.)
	}
}
\makeatother

\title[\gmres behavior for tridiagonal Toeplitz systems]{Constraints on admissible behavior of \gmres applied to tridiagonal Toeplitz systems}

\author[F. Chen]{Fei Chen}
\address[F. Chen]{School of Mathematics, Trinity College Dublin, College Green, Dublin 2, Ireland}
\email{\detokenize{chenf2@tcd.ie}}

\author[K. M. Soodhalter]{Kirk M Soodhalter}
\address[K. M. Soodhalter]{School of Mathematics, Trinity College Dublin, College Green, Dublin 2, Ireland}
\email{\detokenize{ksoodha@maths.tcd.ie}}
\urladdr{https://math.soodhalter.com}

\thanks{Both authors have been supported by Research Ireland grand 22/EPSRC/3857.}

\date{\today}

\dedicatory{}

\begin{document}

\begin{abstract}
The result of Greenbaum, Pták, and Strakoš that for a given set of eigenvalues, any
convergence curve is possible [SIMAX 1996] and the subsequent parameterization
of such matrix-right-hand side pairs $(A,\bb)$ of Arioli, Pták, and Strakoš
[BIT 1998] demonstrated that the behavior of the \gmres could not be completely
characterized by the eigenvalues of $A$ alone.  In this paper, we consider how
to use this theory to understand the admissible and attainable \gmres behavior
for matrices with constrained structure, focussing on non-Hermitian
(non-symmetric) tridiagonal Toeplitz matrices.  We show that Toeptliz structure
necessarily constrains the how the theory from these papers can manifest but
that a continuum of admissible behaviors is still attainable.  

\end{abstract}

\keywords{Toeplitz matrices, GMRES, Krylov, Admissible convergence behavior}

\makeatletter
\ltx@ifpackageloaded{hyperref}{%
\subjclass[2010]{\href{https://mathscinet.ams.org/msc/msc2020.html?t=65F10}{65F10}, \href{https://mathscinet.ams.org/msc/msc2020.html?t=65N12}{65N12}, \href{https://mathscinet.ams.org/msc/msc2020.html?t=15B57}{15B57}, \href{https://mathscinet.ams.org/msc/msc2020.html?t=45B05}{45B05}, \href{https://mathscinet.ams.org/msc/msc2020.html?t=45A05}{45A05}}
}{%
\subjclass[2010]{65F10, 65N12, 15B57, 45B05, 45A05}
}
\makeatother

\maketitle

\section{Introduction}
\label{section:introduction}
Discretizing PDEs leads to linear systems with large, sparse coefficient
matrices. When linear, constant-coefficient PDEs with Dirichlet boundary
conditions are discretized on uniform meshes, one can obtain Toeplitz,
multilevel Toeplitz and/or block Toeplitz systems
\cite{localtoepseq,localtoepseq2}. Toeplitz matrices have constant diagonals,
and multilevel and block Toeplitz matrices have related structures that can be
exploited to accelerate the convergence of iterative methods, and aid in
convergence analysis.  Such systems are widely solved by Krylov subspace
iterative methods, and we focus on their convergence behavior in this work. 

We consider the linear system 
\begin{equation}
\label{eqn:Axb}
A\bx
=
\bb
,
\quad
\mbox{with}
\quad 
A\in\R^{n\times n}
,
\quad
\mbox{and}
\quad
\bx, \bb 
\in 
\R^n
,
\end{equation}
where the matrix $A$ is non-symmetric tridiagonal and Toeplitz, $\bb$ is a
known right-hand side, and $\bx$ is the unknown solution. The Generalized
Minimum Residual Method (\gmres) \cite{SaadSchultz:1986:1} has long been a
workhorse for solving such large-scale problems arising in the computational
sciences and engineering. However, there remain gaps in our theoretical
understanding of its convergence behavior.

For symmetric systems, the spectrum and the right-hand side completely
determine the convergence behavior of a Krylov subspace method. However, it has
long been established that the residual convergence behavior of \gmres applied
to a general, non-symmetric linear system cannot fully be described by the
distribution of the eigenvalues of $A$; in extreme, pathological cases, the
eigenvalues bear no relation to the convergence rate
\cite{GreenbaumPtakStrakos:1996:1}. It is proven constructively that one can build a
matrix/right-hand-side pair $(A,\bb)$ with arbitrary eigenvalues exhibiting any
\gmres admissible convergence behavior. 

In this paper, we describe this construction in terms of constraints on the pair $(A,\bb)$, 
and we show that this construction is much more constrained when we
restrict $A$ to be \emph{non-symmetric}, \emph{tridiagonal}, and \emph{Toeplitz}.  Following the
theory developed in
\cite{GreenbaumPtakStrakos:1996:1,ArioliPtakStrakos:1998:1,Meurant:2012:1}, we
show:
\begin{itemize}
	\item that the structure of $A$ highly constrains the structure of the orthonormal residual Krylov basis,
     \cf \Cref{definition.Krylov-basis};
	\item attainable convergence behavior of \gmres applied to these problems is proscribed;
	\item there is necessarily some connection of convergence behavior to the eigenvalues via
		the relation \cf \eqref{eqn:h-Rs-relation}.
\end{itemize}

The rest of this paper is organized as follows.  In \Cref{section:background},
we give a brief description of \gmres to establish our language followed by
descriptions of the \gmres convergence specification framework and
previous analysis of \gmres applied to tridiagonal Toeplitz systems.  In
\Cref{section:gmres-toeplitz-parameterization}, we develop theory concerning
how the \gmres convergence specification framework is constrained in the case
that the matrix is tridiagonal and Toeplitz.  In
\Cref{section:numerical-demonstrations}, we show some small numerical
demonstrations demonstrating the theory, and in \Cref{section:conclusions} we
concisely summarize the meaning of these results and lay out what future work
there is to pursue.

\section{Background}
\label{section:background}
The \gmres method often begins with an initial approximation, $\bx_0$, and we
solve the problem $A(\bx_0 + \bt) = \bb\iff A\bt = \br_0$, where $\br_0 =
A(\bx-\bx_0) = \bb - A\bx_0$.  Thus \Wlog we consider the case $\bx_0=\bnull$
when applying \gmres to \eqref{eqn:Axb}. At the $k$th iteration, \gmres
selects $\bx_k$, such that
\begin{align*}
	\bx_k\in \cK_k(A,\bb) 
	\coloneq 
	\Span\braces{\bb, A\bb,\cdots,A^{k-1}\bb}
	.
\end{align*}  
Although this basis is rarely used in computation
for numerical stability reasons, it is important for our presentation in \cf
\Cref{subsection:gmres-parameterization} to denote as the $k$th Krylov matrix 
\begin{align}
	K_k 
	= 
	\begin{bmatrix}
		\bb 
		& 
		A \bb 
		& 
		A^2 \bb 
		& 
		\cdots 
		& 
		A^{k-1} \bb
	\end{bmatrix}
	\in 
	\R^{n\times k}.
	\label{eqn:krylov-matrix}
\end{align}
\begin{definition}\label{definition.Krylov-basis}
	For $k=n$, we denote the columns of the Krylov matrix $\braces{\bb, A\bb, \ldots, A^{n-1}\bb}$ 
	as the \emph{Krylov basis}.
\end{definition}
The approximation $\bx_k \in \cK_k(A,\bb)$ is selected according to the minimum
residual criterion
\begin{align}
	\bx_k 
	= 
	\argmin_{
		x\in\cK_k(A,\bb)
	}
	\norm[auto]{
		\bb
		- 
		A \bx
	}
	.
	\label{eqn:GMRES-full-LS}
\end{align}
It is well established that \gmres can also be formulated as a residual
projection method, whereby $\bx_k$ is chosen such that 
\begin{align}
	\br_k 
	\coloneq 
	\bb 
	- 
	A\bx_k
	\perp 
	A\cK_k(A,\bb) 
	\iff 
	\br_k 
	= 
	\prn{
		I - Q_k
	}
	\bb
	,
	\label{eqn:gmres-resid-orth-constraint}
\end{align}
where $Q_k$ is the orthogonal projector onto $A\cK_k(A,\bb)$.

Standard implementation details can be found in \cite{SaadSchultz:1986:1} and are elaborated upon in, \eg, \cite[Chapter 6.5]{Saad2003Iterative}.
We give a brief synopsis of practical \gmres implementation to motivate the construction allowing for specification of \gmres convergence from \cite{GreenbaumPtakStrakos:1996:1}.
The Arnoldi iteration builds an orthonormal basis $\braces{\bv_i}_{i=1}^k$ for $\cK_k(A,\bb)$, often via a Gram-Schmidt type process.  The process is compactly encapsulated 
as the Arnoldi relation 
\begin{align}
	A V_k 
	= 
	V_{k+1} \underline{H_k} 
	= 
	V_k H_k + h_{k+1,k}v_{k+1}e_k^T
	,
	\label{eqn:arnoldi-relation}
\end{align}
with $V_k$ having the basis vectors as columns, and $\underline{H_k}\in\R^{(k+1)\times k}$ being upper Hessenberg with entries begin the coefficients generated by 
the orthogonalization process.  The Hessenberg matrix $H_k\in\R^{k\times k}$ contains the first $k$ rows of $\underline{H_k}$.  Since $V_k$ has orthonormal columns, 
it follows from \eqref{eqn:arnoldi-relation} that $H_k = V_k^T A V_k$, meaning the Arnoldi iteration is a partial orthogonal Hessenberg factorization of $A$ generated by the starting 
vector $\bb$.  Generically, if there is no breakdown, the iteration can be carried to $k=n-1$ to produce a complete orthogonal upper Hessenberg factorization of $A$,
\begin{align}
	A 
	=
	V_n H_n V_n^T 
	\eqcolon 
	V H V^T,
	\label{eqn:full-Hessenberg-factorization}
\end{align}
where to simplify the notation, we have dropped the indices for the full factorization.  

\subsubsection*{Relevant theory of \gmres convergence}
Many authors have presented analyses illuminating the mechanics of \gmres convergence.  
We give a brief overview of work relevant to what we develop in \cf \Cref{section:gmres-toeplitz-parameterization}.

In the years before the developments in \cite{GreenbaumPtakStrakos:1996:1},
much work was done to understand the bounds on attainable behavior of the
\gmres method.  
\begin{assumption}
	For our purposes, we restrict ourselves to the case that $\bb$ 
	has components in all directions of the (generalized) eigenbasis,
	to avoid cases of early-terminating \gmres.
\end{assumption}   
Worst-case behavior of \gmres has explored extensively, \eg, in
\cite{ZavorinOLearyElman:2003:1,LiesenTichy:2004:1}.  Conversely, the authors
of \cite{GreenbaumTrefethen:1994:1} studied the best case behavior of \gmres
for a given spectrum, and they developed a notion of a \gmres \emph{envelope},
in which the \gmres residual curves for general non-early-terminating \gmres
convergence curves must live.  For $4\times 4$ matrices, examples which do and
do not achieve the ideal are explicitly constructed \cite{Toh:1997:1}. The
well-known field of values bound of Elman encodes how the convergence of \gmres is
affected by non-normality \cite{EisenstatElmanSchultz:1983:1,Elman:1982:1}.

In \cite{Ipsen:2000:1}, the author developed bounds on the \gmres residual, in
particular for Toeplitz and Jordan blocks.  This work takes advantage of the
Toeplitz structure, and the author also characterizes how the decrease in
residual norm is connected to the ill conditioning of the Krylov basis, \cf
\eqref{eqn:krylov-matrix}. Special cases, such as $\bb$ being a standard basis
vector, are also considered, simplifying some of the bounds. The author of
\cite[Theorem 1]{Sadok:2005:1} characterizes $\norm[auto]{\br_k}$ using a well-known fact
involving the Gram determinant, namely that $\norm[auto]{\br_k}$ can be  
characterized as the distance of $\bb$ to $A\cK_k(A,\bb)$, and this can be quantified 
as a ratio of two Gram determinants.  Similarly, this was also quantified 
in \cite[Equation 2.4]{EiermannErnst:2001:1} in terms of subspace angles. These give the two 
residual norm identities
\begin{align}
	\norm[auto]{\br_k}^2
	= 
	\frac{
		\det\prn{K_{k+1}^\ast K_{k+1}}
	}{
		\det\brackets{\prn{AK_k}^\ast AK_k}
	}
	= 
	\norm[auto]{\br_0}^2
	\sin^2\angle\prn{\br_0,A\cK\prn{A,\br_0}}
	.
	\label{eqn:gmres-gram-determinant-subspace-angle}
\end{align}
These give two definitions of direct mappings between the parameters
determining $(A,\bb)$ and the \gmres residual norm convergence history. 

The theory of \gmres convergence quantifies how various properties of $A$ and
$\bb$ determine the residual convergence behavior of \gmres. The language of
orthogonal Hessenberg factorizations allows us to more explicitly develop the
tools of \gmres behavior parameterization and specification.

\subsection{General parameterization of \gmres behavior}
\label{subsection:gmres-parameterization}
	We describe the theory presented in, \eg,
	\cite{ArioliPtakStrakos:1998:1,GreenbaumPtakStrakos:1996:1,GreenbaumStrakos:1994:1},
	that we leverage to gain further understanding of the behavior of
	\gmres for Toeplitz systems.  We note that Liesen has previously used
	this approach to obtain general \gmres convergence results
	\cite{Liesen:2000:1,Liesen:1998:1}.

	We restrict ourselves to the case that \gmres runs for a full $n-1$
	iterations \footnote{The case of early-terminating \gmres has been
	characterized in \cite{DuintjerTebbensMeurant:2013:1}}. The Arnoldi
	iteration constructs a sequence of partial orthogonal Hessenberg
	factorizations of $A$.  Carried to final step, it generically produces
	the full factorization \eqref{eqn:full-Hessenberg-factorization}.  Let
	$C\in\C^{n\times n}$ be the companion matrix associated to the
	characteristic polynomial of $A$, $p_A(z)=\det(zI - A)=z^n -
	\sum_{i=0}^{n-1} c_i z^i$. which has the same eigenvalues as $A$, with
	multiplicity, meaning $\deg p_A= n$. This matrix has the structure
	\begin{align*}
		C
		=
		\begin{bmatrix}
			0 & 0 & \dots & 0 & c_0 \\
			1 & 0 & \dots & 0 & c_1 \\
			0 & 1 & \dots & 0 & c_2 \\
			\vdots & \vdots & \ddots & \vdots & \vdots \\
			0 & 0 & \dots & 1 & c_{n-1}
		\end{bmatrix}
	\end{align*}
	and is also similar to $A$.  Extending the Krylov matrix
	\eqref{eqn:krylov-matrix} to the $k=n$ case (denoting $K\coloneq K_n$), we
	have the Krylov companion matrix relation 
	\begin{align}
		AK
		=
		KC
		\iff 
		A 
		=
		KCK^{-1}
		.
		\label{eqn:companion-matrix-relation}
	\end{align}
	We note that the full set of Arnoldi vectors is the unitary factor in a \qr-factorization of $K$; i.e.,
	\begin{align*}
		K 
		= 
		V
		(DU)
	\end{align*}
	with the upper triangular factor being decomposed into a diagonal component $D$ and a unit-diagonal triangular component $U$. This
	allow us to express the coefficient matrix according to the decomposition
	\begin{align}
		A
		=
		V\underbrace{DUCU^{-1}D^{-1}}_{H}V^{\ast}.
		\label{eqn:A-full-gmres-decomp}
	\end{align}
	
	\subsubsection*{Degrees of freedom (\dof)}
	The proof of the existence of pairs $(A,\bb)$ for which the residual
	convergence of \gmres and the development of Ritz values can be
	completely unrelated to the eigenvalues is constructive. 
	Observe that
	over the complex numbers, there are $2n^2$ real \dof
	\footnote{
		by which we mean that for each entry of $A$, the real and
		imaginary parts represent two real \dof.
	}
	determining $A$ and $2n$ \dof determining $\bb$.  Thus, there are
	$2n^2+2n$ \dof we can use to construct pairs $(A,\bb)$ exhibiting
	prescribed \gmres residual convergence behavior and having prescribed
	eigenvalues.
	\begin{remark}
		\label{remark:real-vs-complex-dof}
		We note for the reader that as in the general theory, we pose this
		construction over $\C$ and count real \dof  If we restrict to
		constructing real matrices with real \dof, we still count any
		complex conjugate pairs as being specified by two real \dof 
		A similar count shows that a real matrix $A$ is
        	specified in total by $n^2$ \dof and $\bb$ by
		$n$ \dof When restricting to the real matrix case, we consider
		orthogonal transformations instead of unitary transformations.
	\end{remark} 
	
	We count the number of quantities (\eg eigenvalues, Ritz values,
	other spectral structures) we can assign in terms of the number of \dof each
	assignment fixes to understand what we can assign	independently.
	A key observation in works such as
	\cite{GreenbaumPtakStrakos:1996:1,TebbensMeurant:2012:1} is that we can
	work to a great extent directly with \eqref{eqn:A-full-gmres-decomp} to
	fix all the key pieces of information in $A$.

	\subsubsection*{Eigenvalues} 
	By assigning the $n$ coefficients in the characteristic polynomial in
	the last column of $C$, we assign the $n$ eigenvalues of $A$. This
	accounts for $2n$ real \dof In the general case, this can be done
	independently of any other quantities in the decomposition, and it is
	the main result of \cite{GreenbaumPtakStrakos:1996:1} that they can
	also be assigned fully independently of \gmres residual convergence. In
	the real case, eigenvalues account for $n$ \dof

	\subsubsection*{Ritz values for all iterations}
	Following \cite{TebbensMeurant:2012:1}, we consider the case of specifying the Ritz values at each iteration.
	For every iteration $j$, there are $j$ Ritz values.  Taken together,
	there are thus $n(n-1)/2$ Ritz values in total.  It has been shown that the strictly upper triangular entries  of each column of
	the unit-diagonal, upper-triangular matrix 
	\begin{align*}
		U^{-1}
		=
		\begin{bmatrix}
		    1 & c_0^{(1)} & c_0^{(2)} & c_0^{(3)} & \cdots & c_0^{(n-1)} \\
		    0 & 1 & c_1^{(2)} & c_1^{(3)} & \cdots & c_1^{(n-1)} \\
		    0 & 0 & 1 & c_2^{(3)} & \cdots & c_2^{(n-1)} \\
		    0 & 0 & 0 & 1 & \cdots & \vdots \\
			\vdots & \vdots & \vdots & \vdots & \ddots & c_{n-2}^{(n-1)} \\
		    0 & 0 & 0 & 0 & 0 & 1
		\end{bmatrix}
	\end{align*}
	are the coefficients of the characteristic polynomials of $H_j$ for each $j$ \cite{TebbensMeurant:2012:1}.  In other words, the characteristic polynomial of
	$H_j$ for $1\leq j < n$ is
	\begin{align*}
		p_j(z)
		=
		\det(zI - H_j)
		=
		z^j - \sum_{i=0}^{j-1} c_i^{(j)} z^i
		.
	\end{align*}
	This demonstrates that the $n(n-1)$ real \dof uniquely determining the
	Ritz values can be assigned by appropriately selecting the strictly
	upper-triangular entries of $U^{-1}$. In the real case, this becomes
	$n(n-1)/2$ \dof It was shown in \cite{TebbensMeurant:2012:1} that these
	can be assigned independently of the eigenvalues and the \gmres
	residual convergence, in most circumstances.  The one exception is that
	it is well documented \cite[Chapter 6.5]{Saad2003Iterative} that if \gmres stagnates at iteration $j$, then
	it must follow that at least one Ritz value at that iteration must be
	zero.

	\subsubsection*{Orthonormal basis for residual Krylov subspace}  
	The final degrees of freedom uniquely determining $A$ can be fixed
	through selection of unitary (orthogonal) matrix with columns being an orthonormal
	basis of the Krylov subspace.  However, as established in
	\cite{GreenbaumStrakos:1994:1}, it is useful to consider fixing the
	orthonormal basis for the \emph{residual} Krylov subspace. As we only
	consider the case in which we construct $A$ to be non-singular, this is
	equivalent to specifying $V$. Let
	$\paren[auto]{\{}{\}}{\bw_1,\bw_2,\ldots,\bw_n}$ be orthonormal basis
	for $\C^n$, which we take to be the columns of unitary $W\in\C^{n\times
	n}$ such that  
	\begin{align*}
		\mathrm{span}\paren[auto]{\{}{\}}{\bw_1,\bw_2,\ldots,\bw_j} = A\cK_j(A,\bb)
	\end{align*}
	for all $j\leq n$, and it follows that this basis can be obtained via
	the \qr-factorization
	\begin{align}
		AK 
		= 
		WR
		.
		\label{eqn:residual-Krylov-qr}
	\end{align}
	\begin{remark}
		\label{remark:AK-WR-column-relationship}
		We note that \eqref{eqn:residual-Krylov-qr} implies that
		$A^j\bb = \sum_{i=1}^j r_{ij}\bw_j$ where $R\eqcolon
		\prn{r_{k\ell}}$.
	\end{remark}
	
	Unitary matrices do not form a subspace in the vector space of matrices
	due to lack of closure under linear combination.  However, the number
	of real \dof can be counted by observing that the equations $W^\ast W =
	I$ constrain $n^2$ real \dof  (following
	\cite{EdelmanAriasSmith:1998:1})
	\footnote{
		That unitary matrices have $n^2$ real \dof arises from the fact
		that $W^\ast W=I$ implies  that $\bw_j^\ast\bw_i=0$, $i\neq j$
		which generates $n(n-1)/2$ complex constraints.  Each of these
		is  equivalent to solving two real equations fixing the real
		and imaginary parts of the scalar product.  Thus, they
		contribute $n(n-1)$ real constraints.  In addition, the
		equations $\bw_i^\ast\bw_i=1$ contribute $n$ additional real
		equations. 
	}
	. 
	\begin{remark}
		\label{remark:unitary-scaling-W-overcount}
		We note that this overcounts the \dof, since from
		\Cref{remark:AK-WR-column-relationship}, the successive span of
		an orthonormal basis is invariant under unitary scaling of the
		basis vectors.  Thus, for each column $\bw_k$ of $W$, we must
		account for an extra \dof from $\bw_k\mapsto
		e^{i\theta_k}\bw_k$, $\theta_k\in\R$.  Up to unitary scaling of
		the columns, there are $n^2-n$ real \dof determining
		$W$.	
	\end{remark} 
	Similarly, for real, orthogonal matrices, the equations $W^TW=I$
	constrains $n(n-1)/2$ \dof In \cite{GreenbaumPtakStrakos:1996:1}, it
	was shown these can be assigned independently of all other quantities. 
	
	Altogether, this determines complex $A$ uniquely. Counting up the number
	of quantities we can assign verifies that all \dof are accounted for since $2n + n(n-1) +
	n^2 -n = 2n^2$ real \dof For real $A$, we similarly verify $n + 2\cdot
	n(n-1)/2 = n^2$ \dof

	\subsubsection*{Specification of convergence behavior}
	Due to its residual minimization over nested subspaces of increasing
	dimension, \gmres must produce a monotonically non-increasing sequence
	of residuals.  
	\begin{definition}
		\label{definition:admissible-convergence}
		A sequence $\braces{f_i}_{i=0}^{n-1}\subset\R^+$ is called an \emph{admissible \gmres convergence sequence} if it is monotonically non-increasing; \ie
		\begin{align*}
			f_0 \geq f_1 \geq f_2 \geq \cdots \geq f_{n-1} > 0 
			.
		\end{align*}
	\end{definition}
	The specification of an admissible \gmres residual convergence (in the
	sense of \Cref{definition:admissible-convergence}) is used to determine
	an appropriate $\bb$.  It has been observed that we can write $\bb =
	VDU\be_1$, meaning that the right-hand side is determined by the $n$
	diagonal entries of $D$.  However, this does not yield a clear avenue
	for specifying a wanted convergence pattern.  Instead, we lean on the
	residual orthogonality constraint
	\eqref{eqn:gmres-resid-orth-constraint}, using the basis $W$.  
	
	  If we express the right-hand side with respect to this basis $\bb=\sum_{i=1}^n \eta_{i}\bw_i$, then we observe that
	the residual orthogonality constraint \eqref{eqn:gmres-resid-orth-constraint} implies that $\br_j=\sum_{i=j+1}^n \eta_{i}\bw_i$.
	Exploiting basic properties of norms and orthonormal bases, it is observed in \cite{GreenbaumPtakStrakos:1996:1} that we can thus
	write $|\eta_{i}|^2 = \norm[auto]{\br_{i-1}}^2 - \norm[auto]{\br_i}^2$, for each $i$.  
	Thus we can construct $\bb$ in this basis by determining the $n$
	coefficients $\curly{\eta_{i}}_{i=1}^n$ (\ie $2n$ real \dof), and we can choose them to have absolute
	values that allow us to implicitly specify a right-hand side $\bb$
	such that \gmres will exhibit any wanted admissible residual
	convergence pattern for the matrix $A$.  

	It is shown in \cite[Theorem 2.1]{krylovseq} how to construct a
	parameterization of any nonsingular matrix $A$ and how to construct a
	linear system with prescribed spectrum and convergence behavior.  They
	specify $A$ using the equivalent \emph{APS parameterization}, using $W$
	and $R$ from \eqref{eqn:residual-Krylov-qr}.  We restate the theorem
	using this parameterization.

	\begin{theorem}{(\cite[Theorem 2.1]{krylovseq})}
	\label{thm:APS-parameterization}
	Let $\braces{f_i}_{i=0}^{n-1}$ be an admissible convergence sequence as
	defined in \Cref{definition:admissible-convergence} and let
	$\lambda_1,\cdots, \lambda_n$ be $n$ non-zero complex numbers. Let
	$A\in\C^{n\times n}$, $\bb\in\C^n$, and $\bx_0=\bnull$. Then the following
	assertions are equivalent:
	\begin{enumerate}
		\item The spectrum of $A$ is $\{\lambda_1,\cdots, \lambda_n\}$
			and \gmres applied to the pair $\{A,\bb\}$ yields
			residuals $\bb=\br_0,\cdots, \br_{n-1}$ such that
			\begin{align*}
				\norm[auto]{\br_j}
				=
				f_j,
				\quad 
				j
				=
				0,1,\cdots,n-1
				.
			\end{align*}
		\item The matrix $A$ can be expressed in terms of the residual Krylov basis 
			\begin{align}
				A
				=
				WRCR^{-1}W^*
				\label{eqn:APS-param}
			\end{align}
			and $\bb=W\bh$, where $C$ is the companion matrix
			corresponding to the characteristic polynomial of $A$,
			$W$ is unitary, and $R$ nonsingular upper triangular
			such that
			\begin{align}
				R\bs
				=
				\bh
				.
				\label{eqn:h-Rs-relation}
			\end{align}
	\end{enumerate}
        
	The vectors $\bs$, $\bh$ satisfy
	\begin{align*}
		\bs
		=
		\begin{bmatrix}
			\xi_1 & \cdots &\xi_n
		\end{bmatrix}^T
		,
		\quad
		\mbox{where} 
		\quad
		1-(\xi_1z+\cdots+\xi_nz^n)
		=
		\prod\limits_{i=1}^n
		\prn{
			1-\cfrac{z}{\lambda_i}
		}
		,
		\mbox{and}
		\\
		\bh
		=
		\begin{bmatrix}
			\eta_1
			&
			\cdots
			&
			\eta_n
		\end{bmatrix}^T
		\quad
		\mbox{where}
		\quad
		\abs[auto]{\eta_j}^2
		=
		f_{j-1}^2-f_j^2
		\quad 
		\mbox{for}
		\quad
		j=1,\cdots, n
		,
		\,
		f_n\equiv0
		.
	\end{align*}
\end{theorem}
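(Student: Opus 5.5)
The plan is to reduce both directions to two facts: the companion relation \eqref{eqn:companion-matrix-relation} combined with the \qr-factorization \eqref{eqn:residual-Krylov-qr}, and the Greenbaum--Pták--Strakoš expression of residuals in the basis $W$.

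For (1)$\Rightarrow$(2), I start from a pair $(A,\bb)$ for which \gmres does not terminate early. Then $K$ is nonsingular, and since $A$ is nonsingular (its eigenvalues $\lambda_i$ are nonzero), $AK$ is nonsingular as well.

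\begin{itemize}
\item Take the \qr-factorization $AK=WR$ from \eqref{eqn:residual-Krylov-qr}. Then $W$ is unitary, $R$ is nonsingular upper triangular, and the columns of $W$ successively span $A\cK_j(A,\bb)$.
\item From $AK=KC$ we get $A(AK)=(AK)C$, so $A=(AK)C(AK)^{-1}=WRCR^{-1}W^\ast$. This is \eqref{eqn:APS-param}.
\item Define $\bh\coloneq W^\ast\bb$. By \eqref{eqn:gmres-resid-orth-constraint}, $\br_j=\sum_{i>j}\eta_i\bw_i$, so $\norm{\br_j}^2=\sum_{i>j}\abs{\eta_i}^2$. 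Differencing consecutive terms gives $\abs{\eta_j}^2=f_{j-1}^2-f_j^2$.
\item For \eqref{eqn:h-Rs-relation}, Cayley--Hamilton gives $p_A(A)=0$. Dividing by $p_A(0)=\prod(-\lambda_i)\neq0$, the normalized polynomial $\prod(1-z/\lambda_i)=1-\sum\xi_kz^k$ annihilates $A$. Hence $\bb=\sum_{k=1}^n\xi_kA^k\bb=AK\bs=WR\bs$.
\item Applying $W^\ast$ yields $R\bs=\bh$.
\end{itemize}

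For (2)$\Rightarrow$(1), I take $A=WRCR^{-1}W^\ast$ and $\bb=W\bh$ with $R\bs=\bh$.

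\begin{itemize}
\item $A$ is similar to $C$, so its spectrum is $\{\lambda_i\}$. This uses the fact that $\bs$ is defined from the $\lambda_i$ and $C$ is the companion matrix of the same polynomial, with $c_i$ and $\xi_k$ related by normalization.
\item The key step is to show $AK=WR$, where $K$ is the Krylov matrix of $(A,\bb)$. Equivalently, writing $M\coloneq WR$, I need $A^j\bb=M\be_j$ for $j=1,\dots,n$.
\item From $AM=MC$ and $C\be_j=\be_{j+1}$ for $j<n$, induction reduces this to the base case $A\bb=M\be_1$.
\item The base case uses $\bb=M\bs$: then $A\bb=MC\bs$. The companion structure gives $C\bs=\be_1$ exactly when $\bs$ encodes the normalized characteristic polynomial. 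Checking this is a direct computation: the first row of $C\bs$ gives $c_0\xi_n=1$, and the remaining rows give $\xi_{k}+c_k\xi_n=0$. Both match $p_A(z)/p_A(0)$.
\item Once $AK=WR$ holds with $R$ triangular and nonsingular, $W$ is the residual Krylov basis. The residual formula then gives $\norm{\br_j}^2=\sum_{i>j}\abs{\eta_i}^2=f_j^2$ by telescoping, with $f_n=0$.
\end{itemize}

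I expect the main obstacle to be the index and sign bookkeeping linking $C$ (with entries $c_i$) to $\bs$, namely verifying $C\bs=\be_1$, together with the base case of the induction. The rest is linear algebra. I would carry out this verification first, by writing $p_A(z)=z^n-\sum c_iz^i$ and $p_A(z)/p_A(0)=1-\sum\xi_kz^k$, which gives $\xi_k=-c_k/c_0$ for $1\le k\le n-1$ and $\xi_n=1/c_0$, with the convention $c_n=-1$.
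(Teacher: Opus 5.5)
Your proof is correct. It is essentially the standard Arioli--Pt\'ak--Strako\v{s} argument: the paper only cites this theorem, but its background section lays out exactly the ingredients you use. These are $AK=KC$ upgraded to $A(AK)=(AK)C$, the QR factorization $AK=WR$, the normalized Cayley--Hamilton identity $\bb=AK\bs$ (equivalently $C\bs=\be_1$), and the expansion $\br_j=\sum_{i>j}\eta_i\bw_i$.

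Two points deserve an explicit line. In (1)$\Rightarrow$(2), you should derive the nonsingularity of $K$ from $f_{n-1}>0$ and the nonsingularity of $A$, since a grade $d<n$ would give $\br_d=\bnull$. In (2)$\Rightarrow$(1), your reading of $C$ as the companion matrix of $\prod_i(z-\lambda_i)$ is genuinely needed: under the statement's literal wording (``characteristic polynomial of $A$''), condition (2) does not tie the spectrum to the $\lambda_i$ at all, and for $n=1$ any $A=(a)$ qualifies.
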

This forms the basis for the Arioli-Pták-Strakoš (APS) parameterization
(described in \cite{ArioliPtakStrakos:1998:1} and further elaborated upon in
\cite{Meurant:2012:1}) on which we build our theory, \cf
\Cref{thm:tridiag-toeplitz-APS}. It is important to note that from its
definition, $\bs$ is completely and uniquely determined by the eigenvalues, and
$R$ maps it to a vector $\bh$ encoding convergence.  What the authors of
\cite{ArioliPtakStrakos:1998:1} demonstrated is that the theory of
\cite{GreenbaumPtakStrakos:1996:1} allows for the construction of pairs
$(A,\bb)$ with arbitrary eigenvalues such that the associated $\bs$ (\ie, any
set of eigenvalues) is mapped by $R$ to any $\bh$ (\ie convergence curve). The
APS parameterization \cite{ArioliPtakStrakos:1998:1} describes, in particular,
how all matrices with a prescribed spectrum (via $\bs$ ) can possibly exhibit
(for some $\bb$) a certain convergence behavior (via $\bh$) can be
parameterized by all upper-triangular matrices $R$ such that $R\bs=\bh$.  Given
$\bs$ and $\bh$, the full parameterization of all $(A,\bb)$ for which $A$ has
the prescribed spectrum and \gmres exhibits the prescribed convergence is
parameterized by the choice of $W$ and admissible $R$.  Conversely, for a
specified $A$ with spectrum described by $\bs$, the breadth of admissible $R$
and $\bh$ (in essence telling us what mappings between eigenvalues and \gmres
convergence curves is possible) is a roundabout measure of non-normality. We show 
in \cf \Cref{section:gmres-toeplitz-parameterization} that structural constraints 
imposed on $A$ (in this case it being Toeplitz) constrains which unitary $W$ 
are admissible.
\begin{remark}\label{remark:joint-A-b-construction}
	It is imperative to remember that this construction of $(A,\bb)$ is
	joint.  The choice of $\bb$ is connected to the choices of $W$ and $R$
	and vice-versa.  In particular, the construction requires that we find
	an $R$ mapping $\bs$ to $\bh$ that specifies the wanted convergence.
	Furthermore, it is observed in, \eg \cite{ArioliPtakStrakos:1998:1},
	that $R$ contains the specified \gmres convergence behavior in its
	entries, again highlighting that $A$ and $\bb$ are constructed jointly.
\end{remark}

We observe that since we assume no early termination of \gmres that $K$ is
nonsingular. Since $\det(AK)=\det(A)\det(K)\neq 0$, the $QR-$factorization of
$AK$ is unique up to unitary scaling of the columns of $R$, as discussed in
\Cref{remark:unitary-scaling-W-overcount}. From $\bb=W\bh$ in the second
assertion, we have $\bh=W^*\bb$.  We note that there are a total of $2n^2+2n$
real \dof when independently specifying $A$ and $\bb$, but the theory in
\cite{ArioliPtakStrakos:1998:1,GreenbaumPtakStrakos:1996:1,Meurant:2012:1}
demonstrates that to get the specified \gmres residual convergence curve with
the specified eigenvalues, the matrix and right-hand side must be specified together, 
since $R$ and $\bh$ are tied together by the need for $R$ to map $\bs\mapsto\bh$.

\begin{remark}\label{remark:alternate-h-giving-same-convergence}
	Note that since $\bh$ determines the convergence curve via the absolute 
	value of its entries, for any other $\btilb$ for which \gmres produces the 
	same convergence curve must via the APS-parameterization be associated to 
	some
	\begin{math}
		\btilh 
		= 
		\begin{bmatrix}
			s_1 \eta_1 & s_2 \eta_2 & \cdots & s_n\eta_n
		\end{bmatrix}
	\end{math}
	wherein $s_j=e^{i\pi\theta_j}$
    	are roots of unity (\ie complex signs). However,
	it is not simply a matter of constructing $\btilh$ and expanding in the $W$
	basis.  As pointed out in \Cref{remark:joint-A-b-construction}, $W$ and $R$ are
	bound to $\bb$.  Therefore, for a set of roots of unity $\braces{s_i}_{i=1}^n$,
	any $\btilb$ associated to an $\btilh$ (and an
	associated unitary $\tilW(\btilb)$) exhibiting the same \gmres convergence
	pattern as for $\bb$ with the same $A$ is the solution of the constrained non-linear
	equation 
	\begin{align*} 
		\btilb 
		= 
		\sum_{i=1}^n 
		s_i 
		\abs{\eta_{i}}
		\btilw_i(\btilb) 
		\  
		\mbox{s. t.}
		\  
		\tilW\tilR 
		= 
		\begin{bmatrix}
			A\btilb & A^2\btilb & \cdots & A^n\btilb
		\end{bmatrix}
		\ 
		\mbox{is a \qr factorization.}
	\end{align*}
	This is non-trivial to solve, but we allude to an alternative approach in \cf \Cref{section:numerical-demonstrations}.
\end{remark}


We show in \Cref{section:gmres-toeplitz-parameterization} the construction of
tridiagonal Toeplitz matrices with prescribed \gmres convergence according to
\Cref{thm:APS-parameterization} is highly constrained due to the matrix
structure and restricted \dof available.  The results concerning the APS parameterization tell us that convergence is highly constrained by the eigenvalues because $h=Rs$.  
\begin{definition}
	\label{definition:admissible-W}
	From \eqref{eqn:residual-Krylov-qr}, we see that the unitary $W$ has orthonormal columns
	which successively span the same spaces as the columns of $AK$.  For a particular 
	matrix structure (\eg, Toeplitz), we call a unitary matrix that can successively span 
	the same space as $AK$ for some $A$ with that structure and some $\bb$ as being
	\underline{admissible} with respect to that structure.
\end{definition}
We observe that this construction can be understood as specifying a matrix via its \emph{Frobenius normal form}, 
\cf \Cref{section:frobenius-normal-form}.

\subsection{Theory of \gmres convergence for Toeplitz systems}
\label{subsection:gmres-toeplitz-conv-theory}
A general, non-symmetric $n\times n$ Toeplitz matrix is defined as having constant entries on each diagonal; \eg 
\begin{align*}
	T_n 
	= 
	\begin{bmatrix}
		a_0 & a_{-1}   & a_{-2} & \cdots & \cdots & a_{-(n-1)} \\
		a_1 & a_0      & a_{-1} & \ddots &        & \vdots \\
		a_2 & a_1      & \ddots & \ddots & \ddots & \vdots \\ 
 		\vdots & \ddots & \ddots & \ddots & a_{-1} & a_{-2} \\
 		\vdots &        & \ddots & a_1    & a_0    & a_{-1} \\
		a_{n-1} & \cdots & \cdots & a_2    & a_1    & a_0	
	\end{bmatrix}
	,
\end{align*}
meaning it is defined by its $2n-1$ unique entries.  This means one has this
many \dof when specifying an arbitrary Toeplitz matrix. These matrices are
highly structured, and there is a deep, interesting theory concerning their
spectra and how it develops as $n$ gets larger.  It is beyond the scope of this
work, but this leads to the robust theory of Toeplitz symbols and how they can
be used to estimate spectral properties of $T_n$
and for more general
\emph{generalized locally Toeplitz} systems; see, \eg,
\cite{localtoepseq,localtoepseq2} for a thorough treatment of this beautiful
theory.

We observe that one consequence of this Toeplitz structure is that the matrix is symmetrizable 
via left- or right-multiplication by the flip matrix (also known as the anti-identity matrix)
\begin{align*}
	H 
	= 
	\begin{bmatrix}
		0&0&\cdots&0&1 
		\\ 
		0&0&\cdots&1&0 
		\\ 
		\vdots&0&1&0&0 
		\\ 
		0&\text{\reflectbox{$\ddots$}}&\vdots&\vdots&\vdots 
		\\ 
		1&0&\cdots&0&0 
	\end{bmatrix}
	.
\end{align*}
The resulting matrix $HA$ (or $AH$) is symmetric and provably indefinite, with
eigenvalues of $HA$ (or $AH$) being $\pm$ the singular values of $A$; see, \eg,
\cite{PestanaWathen:2015:1}.  This leads to the innovation that non-symmetric
Toeplitz systems can be solved via flip preconditioning using \minres
\cite{BarbarinoEkstroemGaroniMeadonSerraCapizzanoVassalos:2023:1,Pestana:2019:1,PestanaWathen:2015:1}.  
This symmetrization property has also been extended in a constructive sense to block- and 
multi-level Toeplitz matrices.  More abstractly, Wathen has recently explored  
the fact that every non-symmetric matrix is symmetric with respect to a collection of bilinear forms,
and the nature of those bilinear forms for a given matrix 
has many interesting consequences for the structure of the spectrum \cite{Wathen:2025:1}.

The resulting preconditioned \minres residual convergence behavior for the matrix $HA$ (or $AH$) is in 
large part dictated by the eigenvalues of the system matrix (\ie $\pm$ the singular values of $A$).  In the 
context of the theory presented in \Cref{subsection:gmres-parameterization}, one can hypothesize that the 
limited \dof determining a Toeplitz matrix also constrains in some way the range of 
attainable behaviors \gmres applied to a Toeplitz system.  

\subsubsection*{Existing theory}
We discuss existing theory of \gmres convergence for Toeplitz systems, focusing
on aspects that motivate this work.  The study of Toeplitz matrices has long
been of general mathematical interest, and \gmres convergence for
Toeplitz systems (particularly for large, sparse Toeplitz systems and low-rank
perturbations thereof) is of interest since many systems arising from the
discretization of \pdes have this structure.  

In this work, we restrict ourselves to the most constrained case: a
non-symmetric tridiagonal Toeplitz matrix, which only has three \dof, with the
matrix being determined by a triplet $(a,b,c)$; \ie, 
\begin{align}
	A
	=
	\begin{bmatrix}
		a & c & 0 & \cdots & 0
		\\
		b & a & c & \cdots & 0
		\\
		\vdots & \ddots & \ddots & \ddots & \vdots
		\\
		0 & 0 & b & a & c 
		\\
		0 & 0 & 0 & b & a
	\end{bmatrix}
	\eqcolon 
	\tritoep{a}{b}{c}
	,
	\label{eqn:tridiag-Toepl-gen}
\end{align}
	wherein we denote a tridiagonal matrix by its three entries
	$\tritoep{a}{b}{c}$, with the ordering: diagonal, subdiagonal,
	superdiagonal.
Such a matrix admits the splitting into the $3$-term sum 
\begin{align}
	A 
	= 
	a I 
	+ 
	c S 
	+ 
	b S^T
	\label{eqn:trdiag-Toepl-sum}
\end{align}
where $S\in\R^{n\times n}$ is the upward shift matrix with zero entries except
on the first super-diagonal, which has constant values of $1$.  Following
\cite{LiesenStrakos:2004:1}, we exploit \eqref{eqn:trdiag-Toepl-sum} in our
analysis. In the simplest case, tridiagonal Toeplitz systems arise from the
discretization of certain one-dimensional \pdes with appropriate boundary
conditions. These systems are often overly-simplified, but studying \gmres
behavior in this setting yields insights for more realistic and complicated
structured problems. 

It is well established (see, \eg, \cite[Pages 86 and 113]{Smith:1985:1}) that
for the case $bc\neq 0$ the eigenvalues and eigenvectors of the tridiagonal
Toeplitz matrix admit an expression in terms of the triplet $(a,b,c)$, namely
the eigenpair $(\lambda_i, \bz_i)$ can be written as 
\begin{align}
	\lambda_i 
	= 
	a + 2\sqrt{bc}\cos\prn{\frac{i\pi}{n+1}}
	\in\C
	\quad 
	\mbox{and}
	\quad
	\bz_i 
	= 
	\prn{
		(b/c)^{j/2}\sin\prn{\frac{ij\pi}{n+1}}
	}_j
	\in\C^{n}
	.
	\label{eqn:Toeptliz-eig}
\end{align}  
We observe that two of three \dof are committed to determining the eigenvalues.
The diagonal value $a$ is a point around which the eigenvalues are centered,
and the product $bc$ determines the nature and tightness of any clustering. The
structure of the eigenvalues in \eqref{eqn:Toeptliz-eig} also indicates that
$a$ may not have much relevance in the analysis presented in, \cf,
\Cref{section:gmres-toeplitz-parameterization}. We note that it has been well
established in, \eg \cite{FrommerGlaessner:1998:1,Simoncini:2003:1}, that
Krylov subspaces are invariant with respect to a constant shift of the
diagonal; \ie in this setting 
\begin{align}
	\cK(A, \bb) 
	= 
	\cK(cS + bS^T, \bb)
	.
\end{align}
The analysis in \cf \Cref{section:gmres-toeplitz-parameterization} establishes
exactly what role the diagonal element $a$ plays in \gmres behavior for
tridiagonal Toeplitz matrices. 

The eigenvectors are determined by a third \dof, the ratio $b/c$. Thus, we have
limited ability to specify spectral structure if we want to obtain a
tridiagonal Toeplitz matrix. If we specify eigenvalues conforming to
\eqref{eqn:Toeptliz-eig}, this leaves only one \dof to specify much else when
building pairs $(A,\bb)$ exhibiting a given \gmres residual convergence
behavior.  It should be noted that even in this constrained scenario, it has
been shown that one can construct tridiagonal Toeplitz cases for which the
spectrum is misleading, when predicting \gmres convergence. Indeed, we show in
\cf \Cref{section:gmres-toeplitz-parameterization} that the ratio $b/c$ is
linked to the characterization of admissible APS-parameterization, \ie $W$
satisfying \Cref{definition:admissible-W}.

\begin{assumption}
	We observe that \eqref{eqn:Toeptliz-eig} holding for $bc\neq 0$
	excludes the case of $A$ being a scaled Jordan block or the transpose
	thereof. We proceed under the assumption that $bc\neq 0$ and address
	$bc=0$ in \cf \Cref{subsection:dealing-with-jordan-blocks}.
	\label{assumption:bc-neq-0}
\end{assumption}

Liesen and Strakoš studied the tridiagonal case in
\cite{LiesenStrakos:2004:1}, building up from theory on bidiagonal and Jordan
block structure to the more general tridiagonal Toeplitz case.  They observe
that spectral analysis can be hampered by ill-conditioned eigenvector, a common
hindrance in the \gmres analysis setting.  The same authors also give a concise 
description of the sort of discretizations leading to the studied Toeplitz structure 
in \cite{LiesenStrakos:2005:1}.

This case has also been studied in the \phd thesis of Zhang
\cite{Zhang:2007:1}, and Li also investigated convergence of \cg and \gmres for
these systems \cite{Li:2007:1}. Li and Zhang subsequently published a sequence
of two papers building up additional theory of \gmres convergence for
tridiagonal Toeplitz systems \cite{LiZhang:2008:1,LiZhang:2009:1}.  In the
first paper, they produce bounds related to the three parameters defining the
the matrix using Chebyshev polynomials, and in the second paper, they
demonstrate that the residual convergence behavior of \gmres applied to a given
tridiagonal Toeplitz system accumulates near the residual curve
produced when \gmres is applied for the same matrix but with right-hand side
being either the first and last standard basis vectors $\be_1$ and $\be_n$. 

Ipsen et al. also explored \gmres behavior for these starting vectors in the
degenerate case of a Jordan block \cite{IpsenMeyer:1998:1}.
\begin{remark}
	\label{remark:li-zhang-limits-dof}
	The results from both works \cite{LiZhang:2008:1,LiZhang:2009:1} can be
	understood as being consequences of the limited \dof available in the tridiagonal Toeplitz case.
	We build on that notion in our work.
\end{remark}
Indeed, straightforward computations have shown (\eg, in
\cite{IpsenMeyer:1998:1}) that the Arnoldi process generates the columns of the
identity, in increasing and decreasing order, respectively, $A=\tritoep{a}{b}{c}$ 
with these starting vectors $\be_1$ and $\be_n$.  When this is carried to the final
step, we find that $A$ is similar, respectively, to 
\begin{align*}
	H_{\be_1} 
	=
	\begin{bmatrix}
		a & c & & & 
		\\
		|b| & a & c & & 
		\\
		& \ddots & \ddots & \ddots & 
		\\
		& & |b| & a & c 
		\\
		& & & |b| & a
	\end{bmatrix}
	\quad 
	\mbox{and}
	\quad 
	H _{\be_n}
	=
	\begin{bmatrix}
		a & b & & & 
		\\
		|c| & a & b & & 
		\\
		& \ddots & \ddots & \ddots & 
		\\
		& & |c| & a & b 
		\\
		& & & |c| & a
	\end{bmatrix}
	,
\end{align*}
thereby demonstrating a trivial dependence of the Hessenberg matrices on $(a,b,c)$
for these particular right-hand sides

\section{Parameterization of \gmres behavior for Toeplitz systems}
\label{section:gmres-toeplitz-parameterization}

Our approach for studying admissible convergence behavior for \gmres applied to
tridiagonal Toeplitz problems is to study how the structure of these matrices
and limited \dof constrains attainable APS parameterizations of the form
\eqref{eqn:APS-param}, using \eg the relationship described in
\Cref{remark:AK-WR-column-relationship}.  

We consider the general question: 
\emph{
	for an arbitrary unitary (or orthogonal in the real case) matrix $W$, does
	there exist $A=\tritoep{a}{b}{c}$ and upper-triangular
	matrix $R$ (with positive diagonal) such that, for conforming companion
	matrix $C$, $A$ can be written in the form \eqref{eqn:APS-param}?
}  
Given the limited degrees of freedom, the answer is most certainly \textbf{no}, and we
develop theory describing how the structure of $W$ (and indeed $R$) must be constrained. 

Considering the negative answer to the general question, we make a second, more targeted inquiry: 
\emph{
	if $W$ is a unitary (orthogonal) matrix arising from the APS
	parameterization \eqref{eqn:APS-param} generated by a known pair
	$(A,\bb)$, $A=\tritoep{a}{b}{c}$, does there exist another pair $(\wtA,
	\btilb)$ with $\wtA=\tritoep{\tila}{\tilb}{\tilc}$ that generates the
	same $W$?  Can such a pair be found generating the same \gmres residual
	convergence behavior?
}

We lay out some general results that are useful for answering these questions,
focusing first on characterizing admissible unitary matrices $W$ and upper
triangular matrices $R$ satisfying \eqref{eqn:APS-param} and then building up
to specifying convergence behavior. Using the APS-parameterization
\eqref{eqn:APS-param}, we illustrate the constrained nature of \gmres
convergence by considering different cases in which one may try to obtain a
pair $(A,\bb)$ exhibiting prescribed \gmres residual convergence behavior for
which $A$ is tridiagonal and Toeplitz. Namely, we follow
\cite{ArioliPtakStrakos:1998:1} and rewrite \eqref{eqn:APS-param} as 
\begin{align}
	\widehat{A}
	\coloneq
	W^\ast A W 
	= 
	R C R^{-1}
	,
	\label{eqn:Hessenbergization-residual-Krylov}
\end{align}
which leads to the relation 
\begin{align}
	\widehat{A}R 
	= 
	RC
	.
	\label{eqn:B-R-Krylov-relation}
\end{align}
Since $R$ is upper triangular, it follows (adopting \matlab notation) that 
\begin{align}
	R(:,i) 
	= 
	\sum_{j=1}^i 
	r_{ji}\be_j
	, 
	\label{eqn:triangular-structure-R}
\end{align}
where $\be_j$ denotes the $j$th standard basis vector, which returns the $j$th
column of a matrix when multiplied on the left. We note that
\eqref{eqn:B-R-Krylov-relation} is a Krylov-type relation (following similar
logic to a derivation use in \cite{Meurant:2019:1}), since 
\begin{align}
	\widehat{A} R(:,i) 
	=& 
	R(:,i+1) 
	\quad 
	\mbox{for}
	\quad 
	1\leq i \leq n-1
	\nonumber
	\\
	\widehat{A} R(:,n) 
	=&
	\sum_{j=0}^{n-1} 
	c_j R(:,j+1)
	,
	\label{eqn:B-action-on-R}
\end{align}
due to the companion structure of $C$. 
\begin{remark}
	\label{remark:Ahat-always-upper-Hessenberg}
	Note that for $n\geq 3$, \eqref{eqn:Hessenbergization-residual-Krylov} implies that $\widehat{A}$ is upper Hessenberg.
	In \cite{ArioliPtakStrakos:1998:1}, the authors transform away $W$ in order to understand the common Hessenberg structure 
	determined by $R$ and $C$ for the APS-parameterization.
\end{remark}

From \eqref{eqn:trdiag-Toepl-sum} along with the fact that $W$ is 
unitary, we observe that $\widehat{A}$ has the structure 
\begin{align}
	\widehat{A} 
	= 
	a\cdot I 
	+ 
	c
	\cdot
	W^\ast S W 
	+ 
	b 
	\cdot
	W^\ast S^T W
	\label{eqn:B-Toepl-sum}
	.
\end{align}
 
\begin{lemma}
	\label{lemma:shifted-Krylov-R}
	Let us denote $F_{b,c}\coloneq bS^T + cS$ and $\FbcHat \coloneq W^\ast
	F_{b,c} W$. It follows that for $j=1,2,\ldots n,$ the first $j$ columns
	of $R$ span $\cK_j\prn{\FbcHat, \be_1}$.
\end{lemma}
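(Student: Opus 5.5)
The plan is to read off the column span of $R$ directly from the Krylov-type relation \eqref{eqn:B-action-on-R}, and then remove the diagonal shift $a$ using the splitting \eqref{eqn:B-Toepl-sum}.

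\emph{Step 1: the first column of $R$.} By \eqref{eqn:triangular-structure-R}, $R(:,1)=r_{11}\be_1$. Since $R$ is nonsingular upper triangular, $r_{11}\neq 0$. Hence $\Span\{R(:,1)\}=\Span\{\be_1\}$.

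\emph{Step 2: the columns of $R$ form a Krylov sequence for $\widehat{A}$.} The first line of \eqref{eqn:B-action-on-R} gives $R(:,i+1)=\widehat{A}R(:,i)$ for $1\le i\le n-1$. By induction, $R(:,i)=r_{11}\widehat{A}^{\,i-1}\be_1$. Therefore the span of the first $j$ columns of $R$ equals $\cK_j(\widehat{A},\be_1)$. The nonsingularity of $R$ guarantees these $j$ columns are linearly independent, so this Krylov space has full dimension $j$.

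\emph{Step 3: remove the shift.} By \eqref{eqn:B-Toepl-sum} and $W^\ast W=I$, we have $\widehat{A}=aI+\FbcHat$. I will show by induction on $k$ that
\begin{align*}
\Span\{\be_1,\widehat{A}\be_1,\ldots,\widehat{A}^{k}\be_1\}=\Span\{\be_1,\FbcHat\be_1,\ldots,\FbcHat^{\,k}\be_1\}.
\end{align*}
The binomial expansion gives
\begin{align*}
\widehat{A}^k\be_1=\sum_{i=0}^{k}\binom{k}{i}a^{k-i}\FbcHat^{\,i}\be_1 .
\end{align*}
The coefficient of $\FbcHat^{\,k}\be_1$ in this sum is $1$. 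So each side is contained in the other: one inclusion is immediate from the expansion, and the reverse follows by solving the unit-triangular system (equivalently, apply the same argument with $\FbcHat=\widehat{A}-aI$). This is the standard shift invariance $\cK_j(A,\bb)=\cK_j(A-aI,\bb)$ noted earlier in the paper. Combining with Step 2 gives the claim for every $j=1,\ldots,n$.

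The only point needing care is Step 2. There I must make sure the identification with a Krylov space of dimension exactly $j$ relies on the invertibility of $R$, that is, on the assumption of no early termination. Once that is noted, the rest is routine.
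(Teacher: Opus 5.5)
Your proof is correct and follows the same route as the paper. The companion structure of $C$ gives $\widehat{A}R(:,i)=R(:,i+1)$, so the columns of $R$ form a Krylov sequence for $\widehat{A}$ started from $r_{11}\be_1$; the splitting $\widehat{A}=aI+\FbcHat$ together with shift invariance of Krylov subspaces then finishes the argument. You make explicit what the paper's two-line proof leaves implicit: the induction $R(:,i)=r_{11}\widehat{A}^{\,i-1}\be_1$, the need for $r_{11}\neq 0$, and the binomial-expansion proof of shift invariance.
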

\begin{proof}
	This follows from the Krylov-type relationship that $\widehat{A}$ maps the
	$R(:,i)$ to $R(:,i+1)$ for $i=1,2,\ldots, n-1$.  Since $\widehat{A} =
	\FbcHat + aI$, the identity shift invariance of Krylov
	subspaces \cite{FrommerGlaessner:1998:1,Simoncini:2003:1} yields the
	result.
\end{proof}

We re-express the Krylov relation \eqref{eqn:B-action-on-R}
\begin{align}
	\prn{\FbcHat + a I}
	R(:,i) 
	= 
	R(:,i+1)
	\quad 
	\mbox{for}
	\quad 
	i=1,2,\ldots, n-1
	.
	\label{eqn:general-B-W-R-constraints}
\end{align}
We see from \eqref{eqn:B-Toepl-sum} that $a$ does not interact with the residual Krylov basis. 
Using
\labelcref{eqn:B-action-on-R,eqn:B-R-Krylov-relation,eqn:B-Toepl-sum,eqn:general-B-W-R-constraints},
we make general conclusions about how the tridiagonal Toeplitz structure
\eqref{eqn:tridiag-Toepl-gen} interacts with the APS parameterization
\eqref{eqn:APS-param}. From this, we study how these general conclusions
manifest when applied for different dimensions $n$.

The Hessenberg structure discussed in \Cref{remark:Ahat-always-upper-Hessenberg} leads to the following.
\begin{corollary}
	\label{corollary:bc-complex-line-ratio}
	For a unitary matrix $W$ to be admissible for forming an APS factorization \eqref{eqn:APS-param} for $\tritoep{a}{b}{c}$,
	for each column $i=1,2,\ldots, n-1$, it must satisfy the homogeneous equations
    \begin{align} 
		\frac{b}{c} 
		= 
		- 
		\frac{
			\bw_j^\ast S \bw_i
		}{
			\bw_j^\ast S^T \bw_i
		}
		\quad 
		\mbox{for}
		\quad 
		i+2\leq j\leq n
		\label{eqn:bc-first-ratio-constraints}
		.
	\end{align}
\end{corollary}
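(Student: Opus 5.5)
The approach is to combine the upper Hessenberg structure of $\widehat{A}$ from \Cref{remark:Ahat-always-upper-Hessenberg} with the Toeplitz splitting \eqref{eqn:B-Toepl-sum}, reading the vanishing of the entries below the first subdiagonal as linear constraints on $(b,c)$.

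First I will verify the Hessenberg claim directly. From \eqref{eqn:B-R-Krylov-relation} we have $\widehat{A} = RCR^{-1}$. Here $R$ and $R^{-1}$ are upper triangular and $C$ is upper Hessenberg. Multiplying an upper Hessenberg matrix on either side by upper triangular matrices preserves the Hessenberg pattern. Hence $\widehat{A}$ is upper Hessenberg, so $\widehat{A}_{ji}=\be_j^\ast\widehat{A}\be_i = 0$ whenever $j\geq i+2$. These entries exist only for $i\leq n-2$; for $i=n-1$ the range of $j$ in \eqref{eqn:bc-first-ratio-constraints} is empty, so that case is vacuous.

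Next I will substitute \eqref{eqn:B-Toepl-sum}. Since $\bw_j^\ast \bw_i=0$ for $j\neq i$ by unitarity of $W$, the $aI$ term drops out of every off-diagonal entry. This is consistent with \Cref{lemma:shifted-Krylov-R}, which says the structure depends only on $\FbcHat$. For $j\ge i+2$ we are therefore left with
\begin{align*}
	0 = \widehat{A}_{ji} = c\,\bw_j^\ast S\bw_i + b\,\bw_j^\ast S^T\bw_i .
\end{align*}
This is a homogeneous linear equation in $(b,c)$ for each admissible index pair. Under \Cref{assumption:bc-neq-0} we have $c\neq 0$, so dividing by $c\,\bw_j^\ast S^T\bw_i$ gives exactly \eqref{eqn:bc-first-ratio-constraints}.

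The one step needing care is that this division requires $\bw_j^\ast S^T\bw_i\neq 0$. If it vanishes, then, since $c\neq 0$, the homogeneous equation forces $\bw_j^\ast S\bw_i=0$ as well. I will therefore state the constraint in its homogeneous form, $c\,\bw_j^\ast S\bw_i + b\,\bw_j^\ast S^T\bw_i=0$. The ratio form \eqref{eqn:bc-first-ratio-constraints} is then read as the interpretation of this equation whenever the denominator is nonzero; when the denominator vanishes, the numerator must vanish too, and the constraint places no condition on $b/c$. With this reading, admissibility of $W$ (\Cref{definition:admissible-W}) requires every ratio that is defined to equal the same value $b/c$.
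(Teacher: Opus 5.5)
Your proposal is correct and follows the same route as the paper. In both, the upper Hessenberg structure of $\widehat{A}=RCR^{-1}$ forces the entries in positions $j\geq i+2$ to vanish, and these are the same as the corresponding entries of $W^\ast F_{b,c}W$, since the shift $aI$ does not touch off-diagonal entries; each vanishing entry is a homogeneous linear equation $c\,\bw_j^\ast S\bw_i+b\,\bw_j^\ast S^T\bw_i=0$, which is then solved for $b/c$. Your explicit check that the Hessenberg structure is preserved under multiplication by triangular factors, and your handling of a vanishing denominator $\bw_j^\ast S^T\bw_i$ (where, since $c\neq 0$, the constraint degenerates to $0=0$), supply details that the paper's brief proof leaves implicit.
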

\begin{proof}
	We observe that the upper Hessenberg structure implies that 
	\begin{align*}
		\FbcHat(i+2:n,i)
		= 
		\bnull
		.
	\end{align*}
	The result is obtained directly by solving each homogeneous equation associated to a zero entry of $\FbcHat$ for $b/c$.
\end{proof}
For all $n > 2$, it follows from \eqref{eqn:bc-first-ratio-constraints} that a
unitary $W$ arising from an APS parameterization of $A$ defines a 
complex line, passing through the origin, on which
$(-\bw_j^\ast S \bw_i,\bw_j^\ast S^T \bw_i)$ must lie. 
Further homogeneous equations yield other ratios of the form \eqref{eqn:bc-first-ratio-constraints} equal to $b/c$, 
which means they become constraints on $W$ itself.

\subsection{Does there exist tridiagonal Toeplitz $A$ for a given $W$?}
We consider the case of having an arbitrary unitary (or real \emph{orthogonal})
matrix $W$. The upper triangular matrix $R$ and $(a,b,c)$ are treated as
unknowns, except for $r_{11}$. 
\begin{remark}
	\label{remark:r11-always-free}
	Since \gmres behavior is invariant \wrt the scale of the right-hand
	side, we are always free to scale $\norm[auto]{\bb}$, and therefore
	$\norm[auto]{A\bb}$.  Thus, $r_{11}$ always remains free.
\end{remark}
 Can we reconstruct a tridiagonal Toeplitz matrix $A$ of the form
 \eqref{eqn:APS-param}? We build up the theory by treating the distinct cases
 $n=2,3,4$ and $n>4$.

\subsubsection*{The case $n=2$}
The $2\times 2$ case serves as the base case, but there is no interesting Krylov structure
to exploit.  There is only one possible Toeplitz matrix structure in this case.
There are no homogeneous equations imposing constraints of the form \eqref{eqn:bc-first-ratio-constraints}. 
The relevant equations are
\begin{align}
	r_{11}a 
	+
	r_{11}\FbcHat(1,1)
	=
	r_{12}
	\iff&
	a
	+
	b
	\prn{ 
		\bw_1^\ast S^T \bw_1 
	}  
	+ 
	c 
	\prn{
		\bw_1^\ast S \bw_1
	}
	=
	\frac{r_{12}}{r_{11}}
	\label{eqn:bc-fix-a-r12}
	\\ 
	r_{11}\FbcHat(2,1)
	=
	r_{22}
	\iff&
	b
	\prn{ 
		\bw_2^\ast S^T \bw_1 
	}
	+
	c 
	\prn{
		\bw_2^\ast S \bw_1
	}
	=
	\frac{r_{22}}{r_{11}}
	\label{eqn:b-c-complex-line}
	.
\end{align}
Similarly, the equation \eqref{eqn:bc-fix-a-r12} tells us that for a given
$(b,c)$ we are free to choose either $a$, which fixes $r_{12}$, or $r_{12}$,
which fixes $a$.  For $n>1$, \eqref{eqn:b-c-complex-line} fixes $r_{22}$.

Thus, we have 
\begin{align*}
	A 
	= 
	\begin{bmatrix}
		a & c 
		\\ 
		b & a
	\end{bmatrix}
	,
	\quad
	R 
	=
	\begin{bmatrix}
		r_{11} & r_{12}
		\\
		       & r_{22}
	\end{bmatrix}
	,
	\quad 
	\mbox{and} 
	\   
	\begin{bmatrix}
		A\bb 
		& 
		A^2 \bb	
	\end{bmatrix}
	= 
	WR 
	. 
\end{align*}
The only equations that must be satisfied are \labelcref{eqn:b-c-complex-line,eqn:bc-fix-a-r12}.  From this we can prove the following. 
\begin{lemma}
	\label{lemma:2x2-any-unitary-APS}
	Let $W\in\C^{2\times 2}$ be unitary.  Then there exist infinitely many
	Toeplitz matrices $A$ and upper triangular matrices $R$ with positive
	diagonal entries such that $A$ can be written as an APS
	parameterization product of the form \eqref{eqn:APS-param}.
\end{lemma}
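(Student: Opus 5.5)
The plan is to construct $(a,b,c)$ and $R$ directly from the two relations \labelcref{eqn:bc-fix-a-r12,eqn:b-c-complex-line}. These are the only constraints coming from \eqref{eqn:B-action-on-R} for the first column when $n=2$. The last column of $C$ then imposes no further condition, because $C$ is by definition the companion matrix of $p_A$ and the relation $\widehat{A}R=RC$ holds automatically once $R$ is defined as the triangular factor of $AK$.

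So the approach is constructive rather than a case analysis. First I fix $W$ with columns $\bw_1,\bw_2$, and set $\alpha\coloneq \bw_2^\ast S^T\bw_1$ and $\beta\coloneq \bw_2^\ast S\bw_1$. Writing $\bw_1=(u_1,u_2)^T$ and $\bw_2=(v_1,v_2)^T$, we get $\alpha=\overline{v_2}u_1$ and $\beta=\overline{v_1}u_2$.

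Next I choose $r_{11}>0$ freely, as permitted by \Cref{remark:r11-always-free}. I then want $(b,c)$ with $b\alpha+c\beta=r_{22}/r_{11}$ for some $r_{22}>0$. If $(\alpha,\beta)\neq(0,0)$, the linear functional $(b,c)\mapsto b\alpha+c\beta$ is surjective onto $\C$. The preimage of any positive real number is therefore a complex affine line, which already gives infinitely many pairs $(b,c)$.

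The point needing care is exactly this non-degeneracy: $\alpha=\beta=0$ must be excluded. Since $W$ is unitary, $|u_1|^2+|u_2|^2=1$ and $|v_1|=|u_2|$, $|v_2|=|u_1|$, so $|\alpha|=|u_1|^2$ and $|\beta|=|u_2|^2$. These cannot both vanish, so the equation is always solvable. If one also insists on $bc\neq 0$ to stay within \Cref{assumption:bc-neq-0}, the line $b\alpha+c\beta=\rho$ with $\rho>0$ meets the axes in at most two points, so infinitely many admissible choices remain.

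Given $(b,c)$, equation \eqref{eqn:bc-fix-a-r12} is then satisfied for every $a\in\C$ by defining $r_{12}$ from it. Alternatively, one may fix $r_{12}$ and solve for $a$. This is a further one-parameter family.

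The last step is to verify that the resulting $A=\tritoep{a}{b}{c}$ actually has APS factor $W$. Set $\bb\coloneq r_{11}A^{-1}\bw_1$, choosing $a$ so that $A$ is nonsingular; this excludes only finitely many values of $a$. Then $A\bb=r_{11}\bw_1$, and by construction $A^2\bb=r_{11}A\bw_1=r_{12}\bw_1+r_{22}\bw_2$, which is just the two equations read columnwise. Hence $[A\bb\ A^2\bb]=WR$ with $R$ upper triangular with positive diagonal. By \Cref{thm:APS-parameterization} this is \eqref{eqn:APS-param}, and the infinitude of choices of $(a,b,c)$ gives the claim.
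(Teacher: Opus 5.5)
Your proposal is correct and uses the same construction as the paper's proof: $r_{11},r_{22}>0$ are free, $(b,c)$ lies on the affine line \eqref{eqn:b-c-complex-line}, $a$ is free, and $r_{12}$ is read off from \eqref{eqn:bc-fix-a-r12}. Your additional checks supply steps the paper's proof only asserts: the line is nondegenerate because $|\alpha|=|u_1|^2$ and $|\beta|=|u_2|^2$ cannot both vanish, and $AK=WR$ really holds for $\bb=r_{11}A^{-1}\bw_1$, from which $A(WR)=(WR)C$ follows directly via \eqref{eqn:companion-matrix-relation} without needing \Cref{thm:APS-parameterization}.
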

\begin{proof}
	Let $r_{11}$ and $r_{22}$ be arbitrary positive, real numbers.  Let
	$(b,c)$ be any pair of non-zero complex numbers satisfying
	\eqref{eqn:b-c-complex-line}. Let $a\in\C$ be chosen arbitrarily.  From
	\eqref{eqn:bc-fix-a-r12}, it follows that $r_{12}$ is a fixed complex
	number.  Since $(a,b,c)$ have been determined, the eigenvalues are
	determined.  Thus, $C \in\C^{2\times 2}$ is also determined.  It
	follows that $A =WRCR^{-1}W^\ast$ satisfies
	\begin{align*}
		A 
		=
		\begin{bmatrix}
			a & c 
			\\ 
			b & a
		\end{bmatrix}
		.
	\end{align*}
	Furthermore, since the diagonals of $R$ are chosen arbitrarily,
	$(b,c)$ must only be chosen to satisfy
	\eqref{eqn:b-c-complex-line}, and $a$ can be chosen freely, it follows
	there are infinitely many $2\times 2$ Toeplitz matrices with the wanted form
	\eqref{eqn:APS-param} using the given $W$.
\end{proof}
\begin{example}[Fully general $2\times 2$ parameterization]
	Since every unitary matrix can be associated to every Toeplitz matrix, we offer 
	an explicit general parameterization.  For an angle $\theta$ 
	build the real-orthogonal Givens rotation 
	\begin{align*}
		W 
		= 
		\begin{bmatrix}
			\cos\theta & -\sin\theta 
			\\
			\sin\theta & \cos\theta
		\end{bmatrix}
		.
	\end{align*}
	We choose $r_{11}=1$ \Wlog  It follows from \eqref{eqn:B-action-on-R} for $i=1$ that for  
	any $(a,b,c)\in\C^3$, an $R$ yielding \eqref{eqn:APS-param} is 
	\begin{align*}
		R 
		=
		\begin{bmatrix}
			1 & a + (b+d)\sin(2\theta)/2
			\\ 
			0 & b\cos^2 \theta - d\sin^2\theta
		\end{bmatrix}
		.
	\end{align*}
	The only condition that cannot be violated in this construction is that 
	\begin{align*}
		r_{22} 
		= 
		b\cos^2 \theta 
		- 
		d\sin^2\theta \neq 0
	\end{align*}
	indicating a breakdown necessarily caused by $\bw_1$ being an eigenvector of $A$. 
\end{example}

\subsubsection*{The case $n=3$}
Moving to the $3\times 3$ Toeplitz matrix case introduces the first non-trivial
constraint involving $W$ (see \Cref{remark:Ahat-always-upper-Hessenberg}), as there is one equation of the form
\eqref{eqn:bc-first-ratio-constraints} introducing the rigid invariant involving the
ratio $b/c$. The tridiagonal Toeplitz matrix structure is distinct in this
case; \ie there is another possible Toeplitz structure.  Thus, we have 
\begin{align*}
	A 
	= 
	\begin{bmatrix}
		a & c &
		\\ 
		b & a & c 
		\\ 
		  & b & a
	\end{bmatrix}
	,
	\quad
	R 
	=
	\begin{bmatrix}
		r_{11} & r_{12} & r_{13}
		\\
		       & r_{22} & r_{23}
		\\     &        & r_{33}
	\end{bmatrix}
	,
	\quad 
	\mbox{and} 
	\    
	\begin{bmatrix}
		A\bb 
		& 
		A^2 \bb	
		& 
		A^3 \bb
	\end{bmatrix}
	= 
	WR 
	. 
\end{align*}
\begin{lemma}
	\label{lemma:3x3-any-unitary-APS} 
	All unitary matrices $W\in\C^{3\times 3}$ are admissible with respect
	to tridiagonal structure.  For any choice of $W$, there exist
	infinitely many tridiagonal Toeplitz matrices admitting the
	factorization \eqref{eqn:APS-param} for this $W$, parameterized by $a$,
	$b$ (or $c$) and $r_{11}$.  For each choice of parameters, there exist
	unique $c$ (or $b$) and $R$ (once $r_{11}$ is chosen freely)
	determining the factorization.
\end{lemma}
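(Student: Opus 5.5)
Following the $n=2$ case, I would fix an arbitrary unitary $W\in\C^{3\times 3}$ and treat $(a,b,c)$ and the entries of $R$ as unknowns, with $r_{11}>0$ free by \Cref{remark:r11-always-free}. The approach is to write out the Krylov relations \eqref{eqn:general-B-W-R-constraints} for $i=1,2$ column by column. The last relation in \eqref{eqn:B-action-on-R}, involving $C$, is then automatically consistent once $R$ is built, because $C$ is just the companion matrix of whatever $A$ results. Concretely, I would reconstruct $R$ from $A$ via $AK=WR$ with $\bb := r_{11}A^{-1}\bw_1$, so that the first column of $R$ is $r_{11}\be_1$. \Cref{thm:APS-parameterization} then guarantees \eqref{eqn:APS-param}, provided $K$ is nonsingular (no breakdown).

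The key step is to count the constraints. For $i=1$, the vector $\widehat{A}R(:,1)=r_{11}\widehat{A}\be_1$ must equal $R(:,2)$, which is supported on its first two entries. This gives exactly one homogeneous constraint, $\FbcHat(3,1)=0$, which is the single instance of \Cref{corollary:bc-complex-line-ratio}:
\begin{align*}
	b\,\bw_3^\ast S^T\bw_1 + c\,\bw_3^\ast S\bw_1 = 0 .
\end{align*}
The remaining entries define $r_{12}$ (which depends on $a$, as in \eqref{eqn:bc-fix-a-r12}) and $r_{22}$. For $i=2$, the vector $R(:,3)=\widehat{A}R(:,2)$ is unconstrained, since $R$ is full in its last column for $n=3$; hence $r_{13},r_{23},r_{33}$ are simply \emph{defined} by it. The whole problem therefore reduces to one linear homogeneous equation in $(b,c)$. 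Given $b\neq 0$, this equation determines $c$ uniquely when $\bw_3^\ast S\bw_1\neq 0$, and symmetrically $b$ is determined from a chosen $c$ when $\bw_3^\ast S^T\bw_1\neq0$. Together with free $a$ and $r_{11}$, this yields the claimed family, which is infinite in $b$ by homogeneity.

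The main obstacle is the set of degenerate situations. First, both coefficients $\bw_3^\ast S\bw_1$ and $\bw_3^\ast S^T\bw_1$ might vanish. In that case any $(b,c)$ works, so admissibility still holds, but uniqueness fails; I would note this as the only caveat. Second, the solution might force $bc=0$, violating \Cref{assumption:bc-neq-0}. This happens exactly when one coefficient is zero and the other is not; I would handle it either by appeal to \Cref{subsection:dealing-with-jordan-blocks} or by noting that it is the "(or $c$)" alternative in the statement. Third, the construction could break down, with $r_{22}=0$ or $r_{33}=0$. Since $r_{22}=r_{11}\FbcHat(2,1)$ and $r_{33}=r_{22}\FbcHat(3,2)$ are linear in $b$ once $c$ is expressed as a multiple of $b$, they vanish only for $b=0$ or on a proper subvariety of $W$'s. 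I would argue that generically they are nonzero and that the diagonal can be made positive by the unitary column scaling of \Cref{remark:unitary-scaling-W-overcount}. Finally, $A$ must be nonsingular, which excludes finitely many values of $a$ given $(b,c)$. This still leaves infinitely many choices, completing the argument.
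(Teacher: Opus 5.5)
Your route is the paper's own. For $n=3$ the only homogeneous condition is $\FbcHat(3,1)=0$, which pins down $b/c$, and every remaining entry of $R$ is read off from $\widehat{A}R(:,i)=R(:,i+1)$. You are more careful than the paper, whose proof divides by $\bw_3^\ast S^T\bw_1$ and never checks that $r_{22},r_{33}\neq 0$.

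The gap is in how you dispose of the degenerate cases. They are not removable caveats: on them the claim ``all unitary $W$ are admissible'' is false, so no argument can close it. Take $\bw_1=\be_1$ and any $\bw_2\notin\mathrm{span}\{\be_2\}$. Since $S\be_1=\bnull$ and $\bw_3^\ast S^T\be_1=\bw_3^\ast\be_2\neq 0$, the constraint forces $b=0$. Then $\be_1$ is an eigenvector of $aI+cS$, so $r_{22}=r_{11}\FbcHat(2,1)=c\,r_{11}\bw_2^\ast S\be_1=0$ for every $a$ and $c$. More directly, $A\bb\parallel\be_1$ gives $A^2\bb\in\mathrm{span}\{\be_1,\be_2\}\neq\mathrm{span}\{\bw_1,\bw_2\}$ for \emph{every} tridiagonal $A$. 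So neither the appeal to \Cref{subsection:dealing-with-jordan-blocks} nor the ``(or $c$)'' reading rescues this $W$; it is simply not admissible. Your ``generically nonzero'' step therefore proves less than the statement, and the conclusion ``completing the argument'' overreaches.

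What your argument does establish is the generic version, and you can make the exceptional set explicit. Put $\alpha=\bw_3^\ast S^T\bw_1$ and $\beta=\bw_3^\ast S\bw_1$.
\begin{itemize}
\item If $\alpha=\beta=0$, a generic $(b,c)$ works, as you say. Because $S$ and $S^T$ have no common nontrivial invariant subspace, $\FbcHat(2,1)$ and $\FbcHat(3,2)$ are nonzero linear forms in $(b,c)$.
\item Otherwise $(b,c)=t(\beta,-\alpha)$, and $r_{22}$ and $r_{33}$ equal $t$ and $t^2$ times constants depending only on $W$ and $r_{11}$. In particular $r_{33}$ is quadratic in $b$, not linear. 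Breakdown is therefore all-or-nothing in $t$: it occurs exactly when $\bw_1$ is an eigenvector of $\beta S^T-\alpha S$, or $\mathrm{span}\{\bw_1,\bw_2\}$ is invariant under it.
\end{itemize}
State the lemma with that exclusion rather than for all $W$. If you insist on $bc\neq 0$, you must also exclude $\alpha\beta=0$.

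Finally, drop the positivity step. Rescaling the columns of $W$ changes $W$; only its successive spans survive, which is all \Cref{definition:admissible-W} needs. For fixed $W$, the single phase of $t$ cannot in general make both $r_{22}$ and $r_{33}$ positive. The lemma does not ask for a positive diagonal anyway.
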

\begin{proof}
	For $j=3$, \eqref{eqn:bc-first-ratio-constraints} implies that 
	\begin{align}
		\frac{b}{c} 
		= 
		-
		\frac{
			\bw_3^\ast S \bw_1 
		}{
			\bw_3^\ast S^T \bw_1
		}
		,
		\label{eqn:bc-single-ratio-n3-case}
	\end{align}
	and this is simply a constraint on the ratio $b/c$. We choose $a$
	freely.  Since from \eqref{eqn:bc-single-ratio-n3-case}, the ratio
	$b/c$ is fixed.  We choose, \Wlog, $b$ freely, which determines $c$. It
	follows that \eqref{eqn:b-c-complex-line} determines
	$r_{22}=r_{11}\FbcHat(2,1)$. Similarly, it follows that
	\eqref{eqn:bc-fix-a-r12} determines $r_{12}=r_{11}\FbcHat(1,1)+a$. The
	mapping $\prn{\FbcHat+aI}R(:,2)=R(:,3)$ yields three equations
	determining 
	\begin{align*}
		r_{13}
		=& 
		\FbcHat(1,1)r_{12} 
		+ 
		\FbcHat(1,2)r_{22}	
		+ 
		a r_{12} 
		\\ 
		r_{23}
		=& 
		\FbcHat(2,1)r_{12} 
		+ 
		\FbcHat(2,2)r_{22}	
		+ 
		a r_{22}
		\\ 
		r_{33}
		=& 
		\FbcHat(3,1)r_{12} 
		+ 
		\FbcHat(3,2)r_{22}	
	\end{align*}
	This fully determines $R$ with no constraint on $W$.
\end{proof}
\begin{example}
	A real $3\times 3$ orthogonal matrix is determined by three parameters, but we use 
	an example involving two parameters for simplicity.  Let $\theta=\pi/3$ and $\phi~=~\pi/4$. 
	We generate an example orthogonal matrix as the product of two associated Givens rotations, 
	\begin{align*}
		W 
		=&
		\begin{bmatrix}
			\frac{1}{2} & -\frac{\sqrt{3}}{2} &
			\\ 
			\frac{\sqrt{3}}{2} & \frac{1}{2} & 
			\\ 
				   && 1
		\end{bmatrix}
		\begin{bmatrix}
			1 && 
			\\ 
			  & \frac{\sqrt{2}}{2} & -\frac{\sqrt{2}}{2} 
			  \\ 
			  & \frac{\sqrt{2}}{2} & \frac{\sqrt{2}}{2}
		\end{bmatrix}
		=
		\begin{bmatrix}
			\frac{1}{2} & -\frac{\sqrt{3}}{2\sqrt{2}} & \frac{\sqrt{3}}{2\sqrt{2}} 
			\\
 			\frac{\sqrt{3}}{2} & \frac{1}{2 \sqrt{2}} & -\frac{1}{2 \sqrt{2}} 
			\\
 			0 & \frac{1}{\sqrt{2}} & \frac{1}{\sqrt{2}}		
		\end{bmatrix}
		.
	\end{align*}
	This allows us to compute 
	\begin{align*}
		W^TS^TW 
		= 
		\begin{bmatrix}
 			\frac{\sqrt{3}}{4} & -\frac{3}{4 \sqrt{2}} & \frac{3}{4 \sqrt{2}} 
			\\
 			\frac{\prn{\sqrt{2}+2 \sqrt{6}}}{8} & \frac{\prn{2-\sqrt{3}}}{8}  & \frac{\prn{\sqrt{3}-2}}{8} 
			\\
 			\frac{2 \sqrt{3}-1}{4 \sqrt{2}} & \frac{\prn{2+\sqrt{3}}}{8}  & \frac{\prn{-2-\sqrt{3}}}{8}
		\end{bmatrix}
		\quad 
		\mbox{and} 
		\quad
		W^TSW 
		= 
		\begin{bmatrix}
			\frac{\sqrt{3}}{4} & \frac{\prn{\sqrt{2}+2 \sqrt{6}}}{8}  & \frac{2 \sqrt{3}-1}{4 \sqrt{2}} 
			\\
 			-\frac{3}{4 \sqrt{2}} & \frac{\prn{2-\sqrt{3}}}{8} & \frac{\prn{2+\sqrt{3}}}{8} 
			\\
 			\frac{3}{4 \sqrt{2}} & \frac{\prn{\sqrt{3}-2}}{8} &\frac{\prn{-2-\sqrt{3}}}{8} 
		\end{bmatrix}
	\end{align*}
	\Wlog we fix $a=2$ and $r_{11}=1$.  The ratio $b/c$ has a rigid invariance due to \eqref{eqn:bc-single-ratio-n3-case},
	\begin{align*}
		\frac{b}{c} 
		=
		-
		\frac{
			\bw_3^T S \bw_1
		}{
			\bw_3^T S^T \bw_1
		} 
		= 
		\frac{3}{1-2\sqrt{3}}
		.
	\end{align*}
	This defines a line passing through the origin in the $b$-$c$ plane on
	which 
    	$(-\bw_3^T S \bw_1,\bw_3^T S^T \bw_1)$ must live.  A choice of $b$ fixes $c$, which in turn
	fixes the remaining entries of $R$.  In
	\Cref{figure:n3-specific-3examples}, we show the line on which $(b,c)$
	must live for this choice of $W$.  For three different points on the
	line (for different choices of $b$), we show the associated tridiagonal
	Toeplitz matrices, all of which admit a common APS factorization
	\eqref{eqn:APS-param} using $W$. 
	\begin{figure}[h]
		\begin{center}
			\includegraphics[scale=0.3]{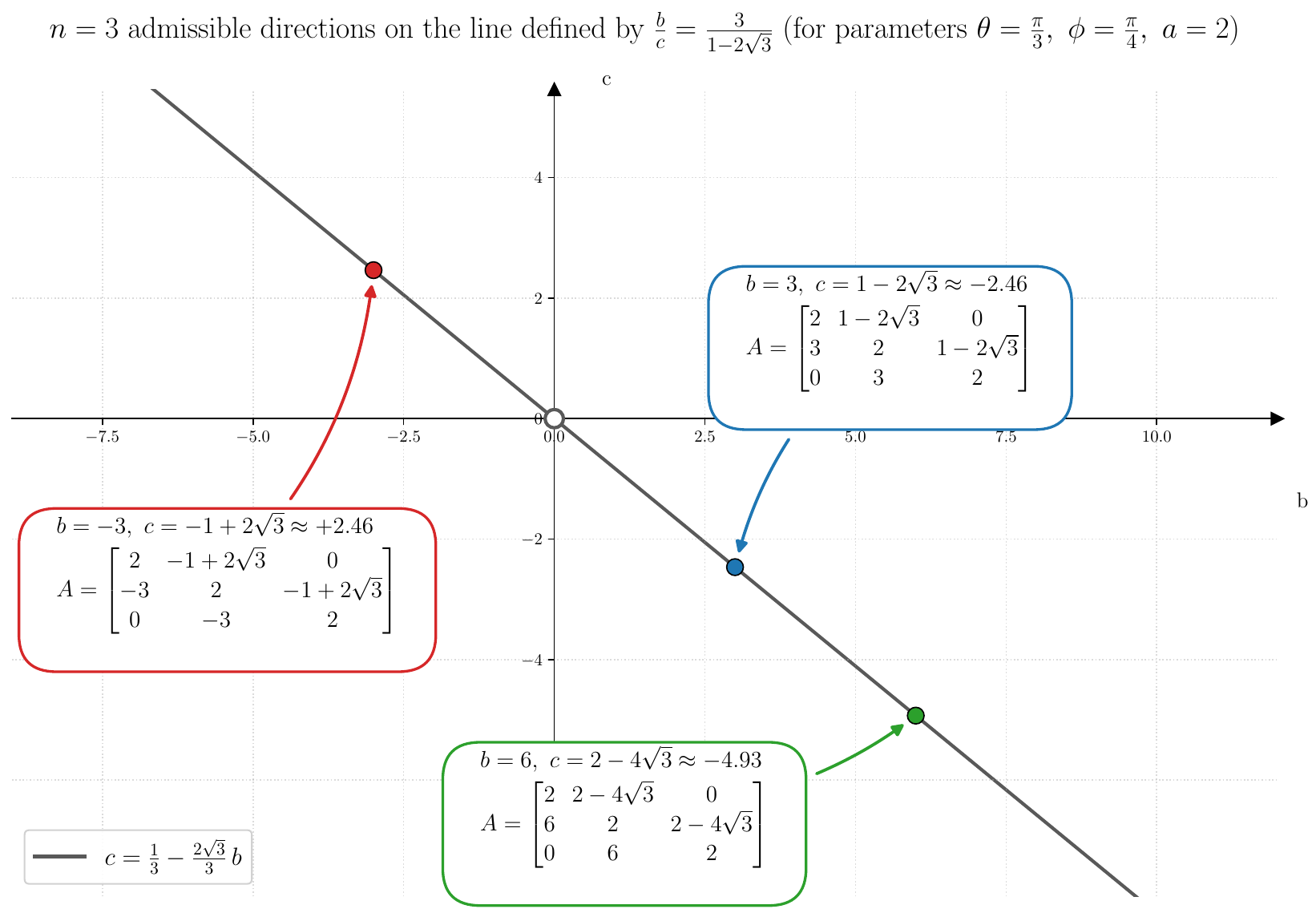}
		\end{center}
		\caption{}\label{figure:n3-specific-3examples}
	\end{figure}
	
\end{example}

\subsubsection*{The case $n=4$}
Moving to the $4\times 4$ Toeplitz matrix case introduces additional
non-trivial constraints that are increasingly complicated. The number of
constraints also increases more rapidly with each increase in dimension. Thus,
we have 
\begin{align*}
	A 
	= 
	\begin{bmatrix}
		a & c &   &
		\\ 
		b & a & c &
		\\ 
		  & b & a & c 
		\\ 
		  &   & b & a
	\end{bmatrix}
	,
	\quad
	R 
	=
	\begin{bmatrix}
		r_{11} & r_{12} & r_{13} & r_{14}
		\\
		       & r_{22} & r_{23} & r_{24}
		\\     
		       &        & r_{33} & r_{34} 
		\\
		       &        &	 & r_{44}
	\end{bmatrix}
	,
	\quad 
	\mbox{and} 
	\begin{bmatrix}
		A\bb 
		& 
		A^2 \bb	
		& 
		A^3 \bb
		& 
		A^4 \bb
	\end{bmatrix}
	= 
	WR 
	. 
\end{align*}
\begin{lemma}
	For a unitary matrix $W\in\C^{4\times 4}$ to be admissible \wrt
	tridiagonal Toeplitz structure (with an $R$ constrained appropriately),
	it is necessary for its first, third, and fourth columns to satisfy 
	\begin{align}
		-\frac{b}{c} 
		=
		\frac{
			\bw_3^\ast S \bw_1 
		}{
			\bw_3^\ast S^T \bw_1
		}
		= 
		\frac{
			\bw_4^\ast S \bw_1 
		}{
			\bw_4^\ast S^T \bw_1
		}
		=
		-\frac{
			\bw_4^\ast S \bw_2
		}{
			\bw_4 S^T \bw_2
		}
		\label{eqn:n4-new-ratio}
		.
	\end{align}
	For any such admissible $W$, there exist infinitely many Toeplitz
	matrices admitting the factorization \eqref{eqn:APS-param} for that
	$W$, parameterized by $a$, $b$ (or $c$) and $r_{11}$. For each choice
	of parameters, there exist unique $c$ (or $b$) and $R$ (with the
	specified diagonal entry) determining the factorization. An admissible
	$W$ must also have a relationship \eqref{eqn:n4-new-ratio} with the
	entries of $R$.
\end{lemma}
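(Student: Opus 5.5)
The plan is to mirror the proofs of \Cref{lemma:2x2-any-unitary-APS} and \Cref{lemma:3x3-any-unitary-APS}. I would first extract every homogeneous constraint that the Hessenberg structure of $\widehat{A}=W^\ast A W$ imposes (\Cref{remark:Ahat-always-upper-Hessenberg}), and then build $R$ column by column from the Krylov-type relation \eqref{eqn:general-B-W-R-constraints}.

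\textbf{Necessity.} For $n=4$ the zero pattern of an upper Hessenberg matrix requires
\begin{align*}
\FbcHat(3,1)=\FbcHat(4,1)=\FbcHat(4,2)=0 .
\end{align*}
By \Cref{corollary:bc-complex-line-ratio}, with $(i,j)\in\{(1,3),(1,4),(2,4)\}$, each of these equations can be solved for $b/c$. Each therefore expresses $b/c$ as a ratio of the form $\bw_j^\ast S\bw_i / \bw_j^\ast S^T\bw_i$. Equating the three expressions gives the chain \eqref{eqn:n4-new-ratio}. I would write out each entry $\FbcHat(j,i)=b\,\bw_j^\ast S^T\bw_i+c\,\bw_j^\ast S\bw_i$ explicitly, so that the signs in \eqref{eqn:n4-new-ratio} are checked directly against the corollary rather than copied from it. This computation uses only \eqref{eqn:B-Toepl-sum} and $bc\neq0$ (\Cref{assumption:bc-neq-0}). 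I would also record that the denominators $\bw_j^\ast S^T\bw_i$ must be nonzero. If one vanishes, the corresponding numerator must vanish as well and the equation imposes no condition.

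\textbf{Construction.} Given an admissible $W$, I would fix $r_{11}>0$ (free by \Cref{remark:r11-always-free}), choose $a$ freely, and choose $b$ freely. The common ratio then fixes $c$. As in the case $n=3$, \eqref{eqn:bc-fix-a-r12} gives $r_{12}$ and \eqref{eqn:b-c-complex-line} gives $r_{22}=r_{11}\FbcHat(2,1)$. Applying $(\FbcHat+aI)$ to $R(:,2)$ gives $R(:,3)$. Its fourth entry equals $\FbcHat(4,1)r_{12}+\FbcHat(4,2)r_{22}$, which vanishes exactly because of the constraints above, so $R(:,3)$ is consistent with $R$ being upper triangular. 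This vanishing is the ``relationship with the entries of $R$'' referred to in the statement. The other entries give $r_{13},r_{23}$ and $r_{33}=\FbcHat(3,2)r_{22}$, using $\FbcHat(3,1)=0$. One more application of $(\FbcHat+aI)$ gives $R(:,4)$, and in particular $r_{44}=\FbcHat(4,3)r_{33}$. The last column of $C$ is not a constraint, since it is determined by the eigenvalues of $\tritoep{a}{b}{c}$ through \eqref{eqn:B-action-on-R}. Uniqueness of $c$ and $R$ once $(a,b,r_{11})$ is chosen follows because every step above is forced. Since $a$ and $b$ range over a continuum, this produces infinitely many matrices.

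\textbf{Main obstacle.} I expect the hardest part to be nondegeneracy. One must ensure $r_{22},r_{33},r_{44}\neq0$, that is, that the subdiagonal entries $\FbcHat(2,1),\FbcHat(3,2),\FbcHat(4,3)$ are nonzero, so that $R$ is nonsingular and there is no early termination. One must also ensure that the diagonal of $R$ can be made positive. Because these entries are linear in $(b,c)$ along the fixed line determined by $b/c$, I would argue that each is either identically zero on that line or nonzero away from the origin. In the first case $W$ is degenerate and is excluded, for instance because $\bw_1$ spans an invariant subspace. In the second case, rescaling the columns of $W$ by unimodular factors (\Cref{remark:unitary-scaling-W-overcount}) makes the diagonal of $R$ positive.
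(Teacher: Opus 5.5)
Your plan follows the paper's proof. The three Hessenberg zeros $\FbcHat(3,1)=\FbcHat(4,1)=\FbcHat(4,2)=0$ are each solved for $b/c$, and the recursion \eqref{eqn:general-B-W-R-constraints} then fixes $R$ column by column once $a$, $b$, $r_{11}$ are chosen. Your extra checks make explicit what the paper leaves implicit: the fourth entry $\FbcHat(4,1)r_{12}+\FbcHat(4,2)r_{22}$ of $R(:,3)$ vanishes, $r_{44}=\FbcHat(4,3)r_{33}$, and the subdiagonal entries stay nonzero along the line by linearity. Do report the outcome of your sign check, though. Since $\FbcHat(j,i)=b\,\bw_j^\ast S^T\bw_i+c\,\bw_j^\ast S\bw_i$, all three equations give $-b/c=\bw_j^\ast S\bw_i/\bw_j^\ast S^T\bw_i$ with the same sign. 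So the leading minus on the last ratio in \eqref{eqn:n4-new-ratio}, and the missing $\ast$, are typos; read literally, the displayed chain would force $b/c=-b/c$.

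The genuine gap is the step ``the common ratio then fixes $c$'', on which your uniqueness claim rests. It presupposes that at least one of the pairs $(\bw_j^\ast S\bw_i,\bw_j^\ast S^T\bw_i)$, $(i,j)\in\{(1,3),(1,4),(2,4)\}$, is nonzero. You note that one equation can be vacuous, but all three can be vacuous at once. For $\bb=r_{11}A^{-1}\be_1$ one gets $AK=I\cdot R$ with $r_{kk}=r_{11}b^{k-1}$. So $W=I$ is admissible for every nonsingular real $\tritoep{a}{b}{c}$ with $b>0$ and any $c\neq 0$; the anti-identity, via $\bb\propto A^{-1}\be_4$, behaves the same way. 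For such $W$ the choice of $(a,b,r_{11})$ does not determine $c$, so the statement's uniqueness claim is false as written. You must either add the hypothesis that one of the three pairs is nonzero, or allow a two-parameter family in $(b,c)$. With $bc\neq 0$, that hypothesis forces both entries of the pair to be nonzero, so the ratio is well defined and rigid. The paper's proof, which appeals to the ``rigid invariance'' from the $n=3$ lemma, has exactly the same hole. Also restrict $a$ to avoid the finitely many values that make $A$ singular; this is needed so that $\bb=r_{11}A^{-1}\bw_1$ exists and the eigenvalues are nonzero as in \Cref{thm:APS-parameterization}. With these two additions your argument is complete.
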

\begin{proof}
	As $n$ increases, we only add new constraint equations.  Thus we 
	build on the proof of \Cref{lemma:3x3-any-unitary-APS}.  There are 
	three homogeneous equations 
	\begin{align*}
		\FbcHat(3:4,1)
		=& 
		\bnull 
		\\
		\FbcHat(4,2)
		=& 
		0
		;
	\end{align*}
	these all lead to new homogeneous equations involving $W$ that all can
	be solved for $b/c$, yielding \eqref{eqn:n4-new-ratio}.  Since
	\eqref{eqn:bc-single-ratio-n3-case} is a rigid invariance on $b/c$,
	further homogeneous equations must constrain $W$. Furthermore, the
	mapping 
	\begin{align*}
		\prn{\FbcHat + aI}
		R(:,3)
		= 
		R(:,4)
	\end{align*} 
	yields for equations fixing $r_{14}$, $r_{24}$, $r_{34}$, and $r_{44}$.
	This fixes $R$, and we have proven the rigid constraints on $b/c$ induced 
	by the homogeneous equations necessarily constrain $W$.
\end{proof}
\begin{example}
	A real $4\times 4$ orthogonal matrix is determined by six parameters,
	but we use an example involving three parameters for simplicity and
	because it is a minimal demonstration of $W$ itself becoming
	constrained.  We generate $W$ as the product of three Givens rotations,
	associated to $\theta=\pi/3$, $\phi=-\pi/4$, and $\omega$ left unspecified. 
	\begin{align*}
		W 
		=&
		\begin{bmatrix}
			\frac{1}{2} & -\frac{\sqrt{3}}{2} & 0 & 0 \\
			 \frac{\sqrt{3}}{2} & \frac{1}{2} & 0 & 0 \\
			 0 & 0 & 1 & 0 \\
			 0 & 0 & 0 & 1
		\end{bmatrix}
		\begin{bmatrix}
			 1 & 0 & 0 & 0 \\
			 0 & \frac{1}{\sqrt{2}} & \frac{1}{\sqrt{2}} & 0 \\
			 0 & -\frac{1}{\sqrt{2}} & \frac{1}{\sqrt{2}} & 0 \\
			 0 & 0 & 0 & 1
		\end{bmatrix}
		\begin{bmatrix}
			 1 & 0 & 0 & 0 \\
			 0 & 1 & 0 & 0 \\
			 0 & 0 & \cos\omega & -\sin\omega \\
			 0 & 0 & \sin\omega & \cos\omega 
		\end{bmatrix}
		\\ 
		=& 
		\begin{bmatrix}
			\frac{1}{2} & -\frac{\sqrt{\frac{3}{2}}}{2} & -\frac{1}{2}\sqrt{\frac{3}{2}} \cos\omega & \frac{1}{2}\sqrt{\frac{3}{2}} \sin\omega 
			\\
			\frac{\sqrt{3}}{2} & \frac{1}{2 \sqrt{2}} & \frac{\cos\omega}{2\sqrt{2}} & -\frac{\sin\omega}{2 \sqrt{2}} 
			\\
			0 & -\frac{1}{\sqrt{2}} & \frac{\cos\omega}{\sqrt{2}} & -\frac{\sin\omega}{\sqrt{2}} 
			\\
			0 & 0 & \sin\omega & \cos\omega.
		\end{bmatrix}	
	\end{align*}
	This allows us to compute the sections of $W^TS^TW$ and $W^TSW$ involved in the constraint of $W$,
	\begin{align*}
		W(:,3:4)^TS^TW(:,1:2) 
		=&
		\begin{bmatrix}
			\frac{\prn{1+2 \sqrt{3}} c_{\omega}}{4 \sqrt{2}} 
			&
			-\frac{1}{8} \prn{\sqrt{3}-2}c_{\omega}-\frac{s_{\omega}}{\sqrt{2}} 
			\\
			-\frac{\prn{1+2 \sqrt{3}} s_{\omega}}{4 \sqrt{2}} 
			& 
			\frac{1}{8} \prn{\prn{\sqrt{3}-2} s_{\omega}-4\sqrt{2} c_{\omega}} 
		\end{bmatrix}
		\\
		W(:,3:4)^TSW(:,1:2) 
		=&
		\begin{bmatrix}
			-\frac{3 c_{\omega}}{4 \sqrt{2}} 
			&
			 -\frac{1}{8}\prn{2+\sqrt{3}} c_{\omega} 
			\\
			\frac{3 s_{\omega}}{4 \sqrt{2}} 
			&
			 \frac{1}{8}\prn{2+\sqrt{3}} s_{\omega} 
		\end{bmatrix}
		,
	\end{align*}
	wherein we have used the shorthand $s_{\omega}\coloneq\sin\omega$ and $c_{\omega}\coloneq\cos\omega$. 

	For this particular structure of $W$, when studying the first columns
	of these two matrices in the context of \eqref{eqn:n4-new-ratio}, we
	note that $\omega$ is not constrained by the associated homogeneous
	equations since nonzero $s_{\omega}$ and $c_{\omega}$ cancel; and, when
	one of them is zero, the corresponding equation is trivial. These
	equations fix the ratio $b/c=3/\prn{1+2\sqrt{3}}$ via the choices of
	$\theta$ and $\phi$. The real constraint on $\omega$ comes from
	$\FbcHat(4,2)=0$, \ie, 
	\begin{align*}
		0 
		=&
		b 
		\prn{
			\frac{1}{8} \prn{\prn{\sqrt{3}-2} s_{\omega}-4\sqrt{2} c_{\omega}}
		}
		+ 
		c 
		\prn{
			\frac{1}{8}\prn{2+\sqrt{3}} s_{\omega}
		}
		\\ 
		=& 
		\frac{
			 s_\omega\prn{4 \sqrt{3}+1}-6 \sqrt{2} c_\omega
		}{
			4+8\sqrt{3}
		}
		,
	\end{align*}
	which is obtained by substituting the known value of $b/c$ into the
	equation.  We obtain from this $s_\omega/c_\omega = 6 \sqrt{2}/\prn{4
	\sqrt{3}+1}$ which implies 
	\begin{align*}
		\omega
		=
		\arctan\prn{6 \sqrt{2}/\prn{4 \sqrt{3}+1}}
		\approx 
		0.819325
		. 
	\end{align*}
	Thus, the constraints on $W$ fix $\omega$ according to
	\eqref{eqn:n4-new-ratio}. In \Cref{figure:n4-specific-3examples}, we
	show some realizations for different values of $(b,c)$ from the rigid
	constraint line.
    
	\begin{figure}[h]
		\begin{center}
			\includegraphics[width=0.5\textwidth]{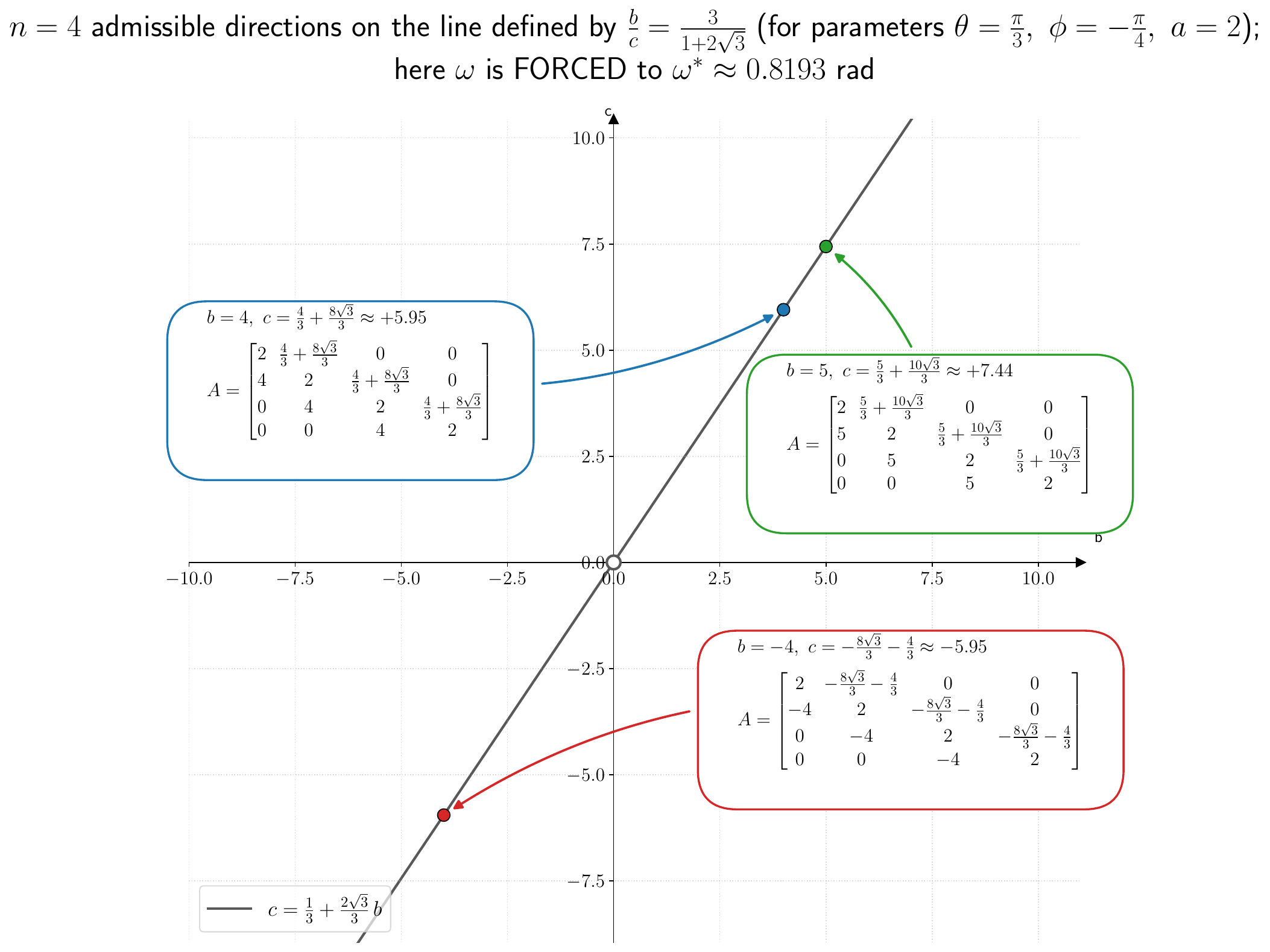}
			\quad
			\includegraphics[width=0.4\textwidth]{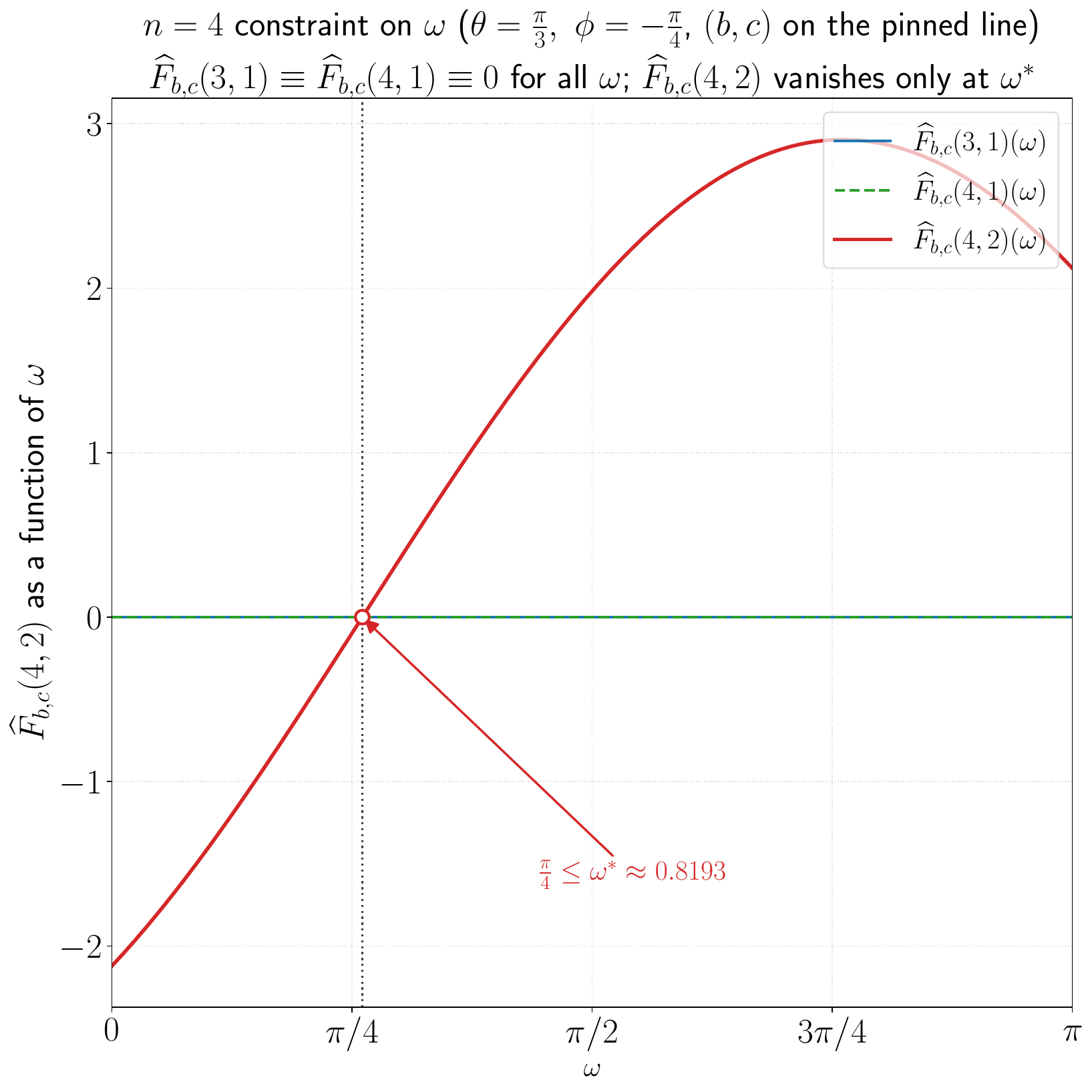}
		\end{center}
		\caption{}\label{figure:n4-specific-3examples}
	\end{figure}
\end{example}

\subsubsection*{The case $n>4$}
For $n=4$, we encounter the first case in which $W$ itself is constrained. From
\eqref{eqn:bc-first-ratio-constraints}, we see that the the number of
constraints on the columns of $W$ for it to be admissible \wrt tridiagonal
Toeplitz structure increases with dimension.  These constraints all relate to
the invariant $b/c$ determined by $W$, and they specify how columns $3, 4,
\ldots, n$ relate to the first column of $W$.    To discuss the general case,
we use a slightly different approach that scales more cleanly as $n$ gets
larger.

The homogeneous constraints arise due to the upper Hessenberg structure of
$\FbcHat$. To better understand how they constrain the structure of admissible
$W$, we rewrite the equations \eqref{eqn:general-B-W-R-constraints}.  Consider
for fixed $i$ what happens to $\FbcHat(:,i)$ as we increase $n$.  The entries
do not change as we increase $n\rightarrow n+1$; we simply add more homogeneous
equations due to the upper Hessenberg structure of $\FbcHat$.  Since
$i=1,2,\ldots n-1$, the number of homogeneous equations constraining $W$
increases quadratically with $n$, with there being $(n-1)(n-2)-1 = n^2-3n+1$
homogeneous equations, corresponding to the number of zeroes in the Hessenberg
structure of $\FbcHat$, except for $r_{11}$, which is unconstrained. We
elaborate further on the structure of $W$ for a specified $b/c$ with a lemma.
\begin{lemma}
	Let $b/c$ be a fixed ratio and $\bw_1\in\C^{n}$ be an arbitrary unit
	vector. Then there exists exactly one unitary matrix $W$ that is
	admissible \wrt to a tridiagonal Toeplitz matrix $\tritoep{a}{b}{c}$
	and also has $\bw_1$ as its first column.
\end{lemma}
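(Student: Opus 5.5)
The plan is to show that the admissibility conditions force the columns of $W$ to be the Arnoldi vectors of $F_{b,c}$ started from $\bw_1$. Since $F_{b,c}$ depends on $(b,c)$ only through the ratio $b/c$, up to a scalar, this pins down $W$. Uniqueness is understood modulo the unimodular column scalings $\bw_k\mapsto e^{i\theta_k}\bw_k$ of \Cref{remark:unitary-scaling-W-overcount}, or equivalently after normalizing the diagonal of $R$ to be positive. Throughout, I work under the standing assumption that \gmres does not terminate early. This means I assume $\bw_1$ generates a Krylov space of full dimension $n$ under $F_{b,c}$.

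\textbf{Step 1: the nested spans of $W$ are Krylov spaces of $\bw_1$.} Suppose $W$ is admissible for some $A=\tritoep{a}{b}{c}$ and some $\bb$, with $AK=WR$. By \Cref{remark:AK-WR-column-relationship}, $A\bb=r_{11}\bw_1$. Hence for each $j$,
\begin{align*}
	\Span\braces{\bw_1,\ldots,\bw_j}
	=
	A\cK_j(A,\bb)
	=
	\Span\braces{A\bb,\ldots,A^j\bb}
	=
	\cK_j(A,A\bb)
	=
	\cK_j(A,\bw_1).
\end{align*}
By the shift invariance of Krylov subspaces, which was already used in \Cref{lemma:shifted-Krylov-R}, $\cK_j(A,\bw_1)=\cK_j(F_{b,c},\bw_1)$. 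Moreover $F_{b,c}=c\,(S+(b/c)S^T)$, and Krylov spaces are invariant under nonzero scaling of the operator. So these spaces depend only on $b/c$ and $\bw_1$.

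\textbf{Step 2: uniqueness.} A unitary matrix whose leading $j$ columns span a prescribed nested chain of subspaces, for every $j$, is determined column by column up to a unimodular factor. Indeed, $\bw_j$ must be a unit vector spanning the orthogonal complement of $\cK_{j-1}$ inside $\cK_j$, and that complement is one-dimensional by the full-dimension assumption. Concretely, $W$ is the output of the Arnoldi (Gram--Schmidt) process applied to $F_{b,c}$ with starting vector $\bw_1$. This yields at most one admissible $W$ for the given ratio and first column.

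\textbf{Step 3: existence.} Take this Arnoldi matrix $W$. Choose any $a$ for which $A=\tritoep{a}{b}{c}$ is nonsingular; by \eqref{eqn:Toeptliz-eig}, this holds for all but at most $n$ values of $a$. Then set $\bb=r_{11}A^{-1}\bw_1$ with $r_{11}>0$ arbitrary, which is allowed by \Cref{remark:r11-always-free}. By Step 1, the columns of $AK$ span the same nested subspaces as the columns of $W$. Therefore $AK=WR$ with $R$ upper triangular and nonsingular, where nonsingularity again uses the full-dimension assumption. Rescaling the columns of $W$ by phases makes the diagonal of $R$ positive, so $W$ is admissible in the sense of \Cref{definition:admissible-W}.

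\textbf{Main obstacle.} The main difficulty is the non-degeneracy hypothesis needed to make ``exactly one'' meaningful. If $\bw_1$ lies in a proper invariant subspace of $F_{b,c}$, for example if it is a combination of fewer than $n$ eigenvectors $\bz_i$, then the Arnoldi process breaks down. In that case the claim fails, and it also conflicts with the no-early-termination assumption. I would state this caveat explicitly. I would also check that the homogeneous constraints \eqref{eqn:bc-first-ratio-constraints} are then automatically satisfied: since $W^\ast F_{b,c}W$ is exactly the Arnoldi Hessenberg matrix, they hold by construction. This confirms consistency with the counting of constraints carried out above.
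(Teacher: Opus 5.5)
Your proof is correct under the hypotheses you state, but it takes a different route from the paper. The paper's proof is a pure degree-of-freedom count. A unitary $W$ has $n^2-n$ real degrees of freedom up to column phases. The zero pattern below the subdiagonal of $W^\ast F_{b,c}W$ is counted as $n^2-3n+1$ homogeneous equations, and the leftover $2n-1$ degrees of freedom are identified with the free choice of $\mathbf{w}_1$.

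You instead show directly that $A\mathbf{b}=r_{11}\mathbf{w}_1$, together with shift and scaling invariance, forces $\mathrm{span}\{\mathbf{w}_1,\ldots,\mathbf{w}_j\}=\mathcal{K}_j(F_{b,c},\mathbf{w}_1)$. Hence $W$ must be, up to phases, the Arnoldi basis of $S+(b/c)S^T$ started at $\mathbf{w}_1$, and you then exhibit a pair $(A,\mathbf{b})$ realizing it.

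The counting argument is quick and fits the global bookkeeping of Section~\ref{subsection:gmres-parameterization}. However, counting equations in a nonlinear system establishes neither solvability nor uniqueness. Your route buys several things the count does not:
\begin{itemize}
\item it actually proves both existence and uniqueness, and gives $W$ constructively;
\item it shows that $a$ and the scale of $(b,c)$ play no role, consistent with Theorem~\ref{thm:tridiag-toeplitz-APS};
\item it exposes hypotheses the statement omits: uniqueness holds only modulo $\mathbf{w}_k\mapsto e^{i\theta_k}\mathbf{w}_k$, and $\mathbf{w}_1$ must have full grade for $F_{b,c}$ (equivalently, a nonzero component along every eigenvector $\mathbf{z}_i$), so ``arbitrary unit vector'' is too strong;
\item it clarifies the count itself: the $(n-1)(n-2)/2$ complex zero entries give $(n-1)(n-2)$ real equations, leaving $2n-2$ degrees of freedom, which is exactly $\mathbf{w}_1$ modulo its phase.
\end{itemize}

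One small correction: normalizing the diagonal of $R$ to be positive is not equivalent to working modulo phases. That normalization fixes $W$ for each individual $(a,b,c,\mathbf{b})$. But replacing $(b,c)$ by $t(b,c)$ with $t\notin(0,\infty)$ multiplies the normalized $\mathbf{w}_k$ by $(t/|t|)^{k-1}$. Across the family with the given ratio, you should therefore keep the ``modulo phases'' formulation.
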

\begin{proof}
	With $n^2-n$ real \dof determining a general unitary $W$, we see that
	there are $n^2-n - (n^2 - 3n + 1) = 2n - 1$ real \dof determining
	admissible unitary $W$; \ie for a given ratio $b/c$, we are free to
	choose any unit vector $\bw_1$ to be the first column of $W$. Its
	structure is otherwise determined.
\end{proof}

We also note that Hessenberg structure of $\FbcHat$ implies structure of the 
residual Arnoldi vectors.
\begin{lemma}
	The probability that a random unitary matrix $W\in\C^{n\times n}$ is 
	admissible \wrt tridiagonal Toeplitz structure with a given ratio $b/c$
	is 
	\begin{align*}
		\frac{2n-1}{n^2-n} 
		= 
		o\prn{\frac{1}{n}}
	\end{align*}
	implying that the probability approaches zero as $n\rightarrow\infty$.
\end{lemma}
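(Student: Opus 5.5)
The approach is to turn the ratio appearing in the statement into a dimension count. We then follow the heuristic the paper uses for its previous lemma: the ``probability'' is the ratio of the real \dof of the admissible family to the real \dof of the full unitary family, taken up to unitary column scaling.

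The steps, in order, are as follows.

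\emph{Step 1 (denominator).} By \Cref{remark:unitary-scaling-W-overcount}, the set of unitary $W\in\C^{n\times n}$ modulo the column phases $\bw_k\mapsto e^{i\theta_k}\bw_k$ is described by $n^2-n$ real \dof. This is the ambient family.

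\emph{Step 2 (numerator).} Fix the ratio $b/c$. By \Cref{corollary:bc-complex-line-ratio} and the Hessenberg structure of $\FbcHat$ (\Cref{remark:Ahat-always-upper-Hessenberg}), admissibility imposes the $n^2-3n+1$ homogeneous equations counted before the previous lemma. That lemma then shows the admissible $W$ are parameterized by the free first column $\bw_1$, giving $2n-1$ real \dof. I will take this count as given.

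\emph{Step 3 (ratio and asymptotics).} Interpret the probability as the ratio of these dimension counts, $(2n-1)/(n^2-n)$. Since $(2n-1)/(n^2-n)\sim 2/n$, it tends to zero as $n\to\infty$.

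\emph{Main obstacle.} The main obstacle is justifying ``probability'' as a ratio of dimensions. Under the Haar measure on the unitary group, a subset cut out by $n^2-3n+1>0$ independent real-analytic equations is a proper submanifold of positive codimension. Such a set has measure zero once $n\geq 3$, so the literal Haar probability is $0$, not $(2n-1)/(n^2-n)$. I would therefore state explicitly that the quantity is a \dof fraction, in keeping with the counting philosophy of \Cref{subsection:gmres-parameterization}, and add a remark that the stronger measure-zero statement holds for $n\geq 4$, where $W$ first becomes constrained. That remark would need a check that the homogeneous equations are generically independent; I would try to obtain this from the $n=4$ example, where the constraint $\FbcHat(4,2)=0$ fixes $\omega$.

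\emph{Asymptotic notation.} The displayed rate also needs care. Since $(2n-1)/(n^2-n)\sim 2/n$, the ratio is $O(1/n)$ but not $o(1/n)$. I would write the bound as $O(1/n)$, or equivalently as $o(1)$, which already gives the claimed limit of zero.
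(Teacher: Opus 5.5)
This is essentially the paper's argument: the lemma is stated without proof, as the ratio of the preceding admissible count to the $n^2-n$ real degrees of freedom of a unitary matrix modulo column phases. Both of your corrections are right. The ratio behaves like $2/n$, so it is $O(1/n)$ and $o(1)$ but not $o(1/n)$. In fact, quotienting the phase of $\bw_1$ as the denominator does, the Hessenberg zeros give $(n-1)(n-2)$ real equations and leave $2n-2$ rather than $2n-1$ admissible degrees of freedom, so the ratio is exactly $2/n$; the stated value even exceeds $1$ at $n=2$. And the literal Haar probability is zero.

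For that measure-zero remark you need no generic-independence check, and a single $n=4$ example could not supply one for all $n$ anyway. It is enough that one real-analytic equation satisfied by every admissible $W$ is not identically zero on the connected unitary group. For fixed $b/c$ and $n\geq 3$, take $\bw_3^\ast(bS^T+cS)\bw_1=0$: at a permutation matrix with $\bw_1=\be_1$, $\bw_3=\be_2$ the left side equals $b\neq 0$. For the union over all ratios when $n\geq 4$, the $2\times 2$ determinant obtained by eliminating $b/c$ from the two constraints on $\bw_1$ works the same way.
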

Thus as $n$ increases, $W$ is increasingly constrained.
The constraints on $W$ also impose a specific technical requirement on the 
successive span of the columns of $W$.
%
\begin{lemma}
	\label{lemma:shifted-range-containment-W}
	An admissible unitary matrix $W$ must satisfy that 
	\begin{align}
		\range 
		\prn{
			W(:,1:i+1) 
		}
		\cap
		\brackets{
			\range 
			\prn{
				S^T W(:,1:i) 
			}
			+
			\range 
			\prn{
				S W(:,1:i) 
			}		
		}
		\neq 
		\emptyset
		\label{eqn:shifted-range-containment-W}
	\end{align}
\end{lemma}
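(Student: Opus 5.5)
The plan is to read the statement as saying that the intersection contains a nonzero vector, since any two subspaces trivially share $\bnull$. I would produce that vector explicitly from the Hessenberg structure of $\FbcHat$, for each $i=1,\ldots,n-1$.

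First fix an admissible $W$. By \Cref{definition:admissible-W} there is some $A=\tritoep{a}{b}{c}$, with $bc\neq0$ by \Cref{assumption:bc-neq-0}, and a nonsingular upper triangular $R$ for which \eqref{eqn:APS-param} holds. By \Cref{remark:Ahat-always-upper-Hessenberg}, $\widehat{A}=W^\ast AW$ is upper Hessenberg, and therefore so is $\FbcHat=\widehat{A}-aI$. Multiplying by $W$ on the left, this becomes the column relation
\begin{align*}
	F_{b,c}\bw_k
	=
	b S^T\bw_k + c S\bw_k
	=
	\sum_{j=1}^{k+1}\FbcHat(j,k)\,\bw_j
	\in
	\range\prn{W(:,1:k+1)},
	\quad
	k\leq i .
\end{align*}
This is exactly the content of \Cref{corollary:bc-complex-line-ratio}: the entries $\FbcHat(j,k)$ vanish for $j\geq k+2$.

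Next take $k=i$ and set $\bv\coloneq bS^T\bw_i+cS\bw_i$. By construction $\bv\in\range(S^TW(:,1:i))+\range(SW(:,1:i))$, and by the display $\bv\in\range(W(:,1:i+1))$, so $\bv$ lies in the intersection. Linear combinations over $k\leq i$ give more such vectors, so in fact the whole image $F_{b,c}\range(W(:,1:i))$ lies in the intersection.

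The main obstacle is showing that $\bv\neq\bnull$. For this I would use the subdiagonal entry. From \eqref{eqn:general-B-W-R-constraints} and the upper triangularity of $R$, comparing the $(i+1)$st components gives $\FbcHat(i+1,i)\,r_{ii}=r_{i+1,i+1}$. Both diagonal entries are nonzero because $R$ is nonsingular, which holds under the no-early-termination assumption. Hence $\FbcHat(i+1,i)\neq0$, so $\bv$ has a nonzero $\bw_{i+1}$-component and cannot vanish.

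As an independent check, $F_{b,c}$ is itself nonsingular whenever its eigenvalues $2\sqrt{bc}\cos(k\pi/(n+1))$, taken from \eqref{eqn:Toeptliz-eig} with $a=0$, are all nonzero, and in that case $\bv\neq\bnull$ immediately. The subdiagonal argument is better because it avoids this spectral condition and holds under \Cref{assumption:bc-neq-0} alone. The conclusion is that the intersection contains the nonzero vector $\bv$, in fact a subspace of dimension at least one, which is the intended meaning of \eqref{eqn:shifted-range-containment-W}.
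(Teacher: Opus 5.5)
Your proof is correct and follows the paper's argument. The paper uses the same witness $bS^T\bw_i+cS\bw_i$ (its choice $\bu=b\be_i$, $\bv=c\be_i$) and the vanishing of $\FbcHat(i+2:n,i)$ to place this vector in $\range(W(:,1:i+1))$. Your extra step, showing the vector is nonzero because $\FbcHat(i+1,i)\,r_{ii}=r_{i+1,i+1}\neq 0$, goes beyond the paper's proof: the paper never addresses nonzeroness, and without it the condition $\neq\emptyset$ holds trivially for any two subspaces.
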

\begin{proof}
	One observes that from \eqref{eqn:bc-first-ratio-constraints} that
	there exist $\bu,\bv\in\C^{i}$ (\ie, $\bu=b\be_i,\bv=c\be_i$) 
	such that 
	\begin{align*}
		W(:,i+2:n)^\ast 
		\prn{
			S^T W(:,1:i)\bu 
			+ 
			S W(:,1:i)\bv
		}
		=
		\bnull
	\end{align*}
	Since $W$ is unitary, this implies that
	\begin{align*}
		S^T W(:,1:i)\bu + S W(:,1:i)\bv 
		\perp 
		\range\prn{W(:,1:i+2:n)}
		,
	\end{align*}
	meaning it lies in $\range\prn{W(:,1:i+1)}$, thus proving the result.
\end{proof}

\subsection{Can $W$ from a known $(A,\bb)$ be generated by a second pair?}
We have determined that admissible unitary matrices $W$ are highly constrained
in structure. Consider $W$ generated by a known pair $(A,\bb)$ (thereby
satisfying \Cref{eqn:general-B-W-R-constraints}) for the associated $R$
from \eqref{eqn:APS-param}. Does there exist another pair $(\wtA ,\btilb)$ that
also generates the same $W$?

The answer depends on the dimension $n$.  When $2\leq n \leq3$, we have shown
that infinitely many such $\wtA$ can be constructed for any $W$. However, when
$n\geq 4$, we show that the situation is much more constrained but still
possible.

Since $W$ is considered to have been generated from a known pair $(A,\bb)$, we
treat its entries as constants, and the unknowns are the entries of $R$ (except
for $r_{11}$, which remains free). The ratio $b/c$ is rigidly constrained by $W$;
so we can freely choose \Wlog $b$.  The diagonal shift $a$ remains free. 

Of particular importance is the invariance
\eqref{eqn:bc-first-ratio-constraints}, which tells us that for a given $W$, a
second tridiagonal Toeplitz matrix $\wtA$ generating $W$ must satisfy
$(\tilb,\tilc)=t(b,c)$ for some $t\in\C$ (or $t\in\R$ in the real case). This,
along with the fact that for a given $W$, if $r_{11}$, $a$ and \Wlog $b$ are
chosen freely, $c$ is fixed and the generation of the Krylov sequence
\eqref{eqn:B-action-on-R} fixes all entries of $R$, other than $r_{11}$.
\begin{theorem}
	\label{thm:tridiag-toeplitz-APS}
	Let $(A,\bb)$ be a matrix/right-hand side pair with
	$A=\tritoep{a}{b}{c}$ and $bc\neq 0$, such that the APS
	parameterization \eqref{eqn:APS-param} holds with $\bb=\sum_{i=1}^n
	\eta_{i}\bw_i$. 
    Then there exists an affine family of tridiagonal Toeplitz
	matrices 
	\begin{align*}
		\cF_{b/c} 
		\coloneq &
		\braces{t \cdot \tritoep{a}{b}{c} + \alpha I \ |\ \alpha\in\C\ \mbox{and}\ 0\neq t\in\C}
		,
	\end{align*}
	characterized by the ratio $b/c$ and parameterized by $(\alpha,t)$,
	that can generate the residual Arnoldi space associated to the columns
	of $W$. The choices of $\alpha$ and $t$ completely determine a unique
	$R_{t,\alpha}$ associated to that $W$.
\end{theorem}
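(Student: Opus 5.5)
The plan is to show that every member of $\cF_{b/c}$ produces the same nested residual Krylov spaces as $A$, and therefore the same admissible $W$; then to read off the unique $R_{t,\alpha}$ from the Krylov-type recursion \eqref{eqn:general-B-W-R-constraints}; and finally to argue that nothing outside $\cF_{b/c}$ can occur.

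For the first step, fix $\wtA = tA + \alpha I = \tritoep{ta+\alpha}{tb}{tc}$ with $t\neq 0$, and consider the same right-hand side $\bb$ (or any rescaling, cf. \Cref{remark:r11-always-free}). By the shift and scale invariance of Krylov subspaces \cite{FrommerGlaessner:1998:1,Simoncini:2003:1}, $\cK_j(\wtA,\bb)=\cK_j(A,\bb)$ for every $j$. The residual spaces need a little more care, because $\wtA\cK_j(\wtA,\bb)$ is not simply a shift of $A\cK_j(A,\bb)$. I would work instead in the transformed setting. Since $F_{tb,tc}=tF_{b,c}$, we get $W^\ast \wtA W = t\FbcHat + (ta+\alpha)I$. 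This matrix is upper Hessenberg precisely when $\FbcHat$ is, and it has the same subdiagonal pattern up to the factor $t$. So every homogeneous constraint of \Cref{corollary:bc-complex-line-ratio} is invariant under $(b,c)\mapsto t(b,c)$: the ratio $b/c$ is unchanged, and the equations are homogeneous in $(b,c)$.

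Next, construct $R_{t,\alpha}$ by fixing $r_{11}>0$, setting $R(:,1)=r_{11}\be_1$, and defining $R(:,i+1) = \prn{t\FbcHat+(ta+\alpha)I}R(:,i)$. Hessenberg structure makes each column upper triangular with the new diagonal entry $r_{i+1,i+1} = t\,\FbcHat(i+1,i)\,r_{ii}$. This is nonzero provided the subdiagonal of $\FbcHat$ has no zeros, which follows from the nonsingularity of the original $R$ (no early termination, \Cref{lemma:shifted-Krylov-R}). The last column is forced by Cayley--Hamilton for the companion matrix $\wtC$ of $\wtA$. Hence $W^\ast\wtA W R_{t,\alpha}=R_{t,\alpha}\wtC$, giving $\wtA = W R_{t,\alpha}\wtC R_{t,\alpha}^{-1}W^\ast$. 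Setting $\btilb = W\btilh$ with $\btilh\coloneq R_{t,\alpha}\tilde{\bs}$ then yields an APS pair by \Cref{thm:APS-parameterization}. Uniqueness of $R_{t,\alpha}$ is immediate, since the recursion determines every column once $r_{11}$ is fixed; any diagonal-phase ambiguity is removed by the positivity normalisation, as in \Cref{remark:unitary-scaling-W-overcount}.

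For the converse (that the family is exactly $\cF_{b/c}$), suppose $\tritoep{\tila}{\tilb}{\tilc}$ admits the same $W$. For $n\ge 3$, the constraint \eqref{eqn:bc-single-ratio-n3-case} fixes $\tilb/\tilc=b/c$, so $(\tilb,\tilc)=t(b,c)$; then $\tila = ta+\alpha$ with $\alpha\coloneq\tila-ta$ arbitrary. The main obstacle I expect is the degenerate case where the denominator $\bw_3^\ast S^T\bw_1$ (or the numerator) vanishes, so that the ratio is not well defined. I would handle it by using the homogeneous form $\tilb\,\bw_3^\ast S^T\bw_1+\tilc\,\bw_3^\ast S\bw_1=0$ together with the additional equations for $j>3$ to still force proportionality. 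For $n=2$ no ratio constraint exists, consistent with \Cref{lemma:2x2-any-unitary-APS}, so the claim is only that $\cF_{b/c}$ is contained in the admissible set.
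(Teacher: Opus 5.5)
Your existence-and-uniqueness argument is correct and follows the paper's route, written out more fully. The paper also uses that the Hessenberg constraints \eqref{eqn:bc-first-ratio-constraints} are homogeneous in $(b,c)$, so $(\tilb,\tilc)=t(b,c)$ keeps $W^\ast\wtA W=t\FbcHat+(ta+\alpha)I$ upper Hessenberg. It then lets the Krylov recursion \eqref{eqn:B-action-on-R} produce the unique $R_{t,\alpha}$. You additionally supply three steps the paper leaves implicit:
\begin{itemize}
\item the nonvanishing of $t\,\FbcHat(i+1,i)\,r_{ii}$, from nonsingularity of the original $R$;
\item the last column, via Cayley--Hamilton;
\item the explicit new right-hand side $\btilb=WR_{t,\alpha}\tilde\bs=r_{11}\wtA^{-1}\bw_1$, which reduces to $\bb/t$ when $\alpha=0$.
\end{itemize}

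Your final paragraph is where things go wrong. The theorem only asserts that $\cF_{b/c}$ generates $W$, so no converse is needed. The converse you propose is false in exactly the degenerate case you flag, and the equations for $j>3$ cannot rescue it. Take $\bb=A^{-1}\be_1$. Then $AK=[\be_1,A\be_1,\ldots,A^{n-1}\be_1]$ is upper triangular with diagonal $1,b,\ldots,b^{n-1}$, so $W=I$ up to column phases. This is the $\be_1$ phenomenon recalled in the background section. For $W=I$ one has $\bw_j^\ast S\bw_i=\bw_j^\ast S^T\bw_i=0$ for every $j\geq i+2$, so every homogeneous constraint reads $0=0$. Consequently every nonsingular $\tritoep{\tila}{\tilb}{\tilc}$ with $\tilb\tilc\neq 0$ generates the same $W$ (with right-hand side $\wtA^{-1}\be_1$), whatever $\tilb/\tilc$ is. In general, since $\bw_j^\ast S^T\bw_i=\overline{\bw_i^\ast S\bw_j}$, the constraints force $(\tilb,\tilc)\in\C(b,c)$ precisely when $W^\ast S W$ is not tridiagonal. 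The sub-case where exactly one coefficient of a constraint vanishes cannot occur, since it would force $b=0$ or $c=0$. So either drop the converse or add this hypothesis. The sentence preceding the theorem in the paper makes the same overclaim, but the paper's proof does not use it.

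Two smaller caveats, which the paper's statement shares:
\begin{itemize}
\item An APS factorization, and your $\tilde\bs$, requires $\wtA$ nonsingular. So for each $t$ the $n$ values $\alpha=-t\lambda_i$ must be excluded from $\cF_{b/c}$.
\item For $t$ not a positive real, the diagonal of $R_{t,\alpha}$ is $t^{i-1}r_{ii}$ independently of $\alpha$. Positivity can then be restored only by rephasing the columns of $W$, not within $R$. So $W$ is reproduced up to column phases, which is all the theorem's ``residual Arnoldi space'' wording requires.
\end{itemize}
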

\begin{proof}
	Most of these results have already been proven. Given a choice of
	$\alpha$ and $t$, we obtain $(\tilb,\tilc)=t(b,c)$. We rescale the
	diagonal and store any discrepancy in $\alpha$. The Krylov iteration
	\eqref{eqn:B-action-on-R} generates the unique $\tilR$ forming
	\eqref{eqn:APS-param}. In the case that $\alpha\neq 0$, it is clear
	from \eqref{eqn:B-action-on-R} that $\tilR$ has no particular structure
	beyond upper triangular with positive diagonal elements.  If $\alpha =
	0$, the Krylov subspace generated by \eqref{eqn:B-action-on-R} becomes
	$\cK_j(t\FbcHat, \be_1)=\cK_j(\FbcHat, \be_1)$, which multiplies
	increasing powers of $t$ on each column of $R$ to produce $\tilR$, to
	accommodate the matrix scaling $t$.
\end{proof}
\noindent
\subsection{The scaled Jordan block case}
\label{subsection:dealing-with-jordan-blocks}
Although some computations simplify and
the spectral structure collapses, the theory developed in this manuscript mostly
extends without issue to the case of $bc=0$, which can be explored $\Wlog$ for
$b=0$, \ie for scaled Jordan blocks.  In this case,  
\begin{align*}
	A
	=&
	\tritoep{a}{0}{c}
	=
	c\tritoep{a/c}{0}{1}
	= 
	aI + cS.
\end{align*}
is a 
scaled Jordan block.  The eigenvalues collapse onto a single degenerate eigenvalue $a$ with geometric 
multiplicity of one.  

Structurally, we observe that for a Jordan block, the situation becomes simpler
due to \emph{scaling invariance} of Krylov subspaces (in addition to the
aforementioned shift invariance); namely $\cK_j(aI+cS, \bw_1)=\cK_j(S,\bw_1)$.
Thus, all Jordan blocks generate the same $W$, if given the same starting
vector.  Thus, they form a single family sharing the same set of admissible
unitary matrices \wrt tridiagonal Toeplitz structure.  The parameter $a$ is
still free, and it now determines the single eigenvalue.  Framing this in the
same language as for the non-Jordan case, this family is defined by the
constraint $(b,c)$ lies on the line $(0,t)$.  Structurally, the APS
parameterization \eqref{eqn:APS-param} does not change much, since $C$ is
defined via the still-degree-$n$ characteristic polynomial.  Constraints
determining the family of admissible $W$ reduces from
\eqref{eqn:general-B-W-R-constraints} to
\begin{align}
	W(:,i+2:n)^\ast S W(:,1:i)
	R(1:i,i)
	=
	\bnull
	\quad 
	\mbox{for}
	\quad 
	i=1,2,\ldots, n-1
	,
	\label{eqn:general-B-W-R-constraints-jordan-null}
\end{align} 
whereby $c\neq 0$ has dropped out, demonstrating that the constraint on admissible $W$ does not depend at all on 
which Jordan block.  It merely depends on the Jordan structure itself.

\subsection{Attainable convergence}
\label{subsection:admissible-attainable}
We have described how the limited real \dof determining the pair $(A,\bb)$ in
the case that $A=\tritoep{a}{b}{c}$ constrains the APS-parameterization.  We
build on this to complete the theory by describing admissible and attainable
convergence behavior for the family of tridiagonal Toeplitz matrices. 

For our analysis, it is convenient to re-parameterize in terms of new variables
$(a, \tau, \xi)$, where for $A = \tritoep{a}{b}{c}$, we have $\tau^2 = bc$ and
$\xi^2=b/c$ so that we can rewrite $A = \tritoep{a}{\tau\xi}{\tau/\xi}$.  This
type of reparamterization is in line with analysis in works such as
\cite{LiesenStrakos:2004:1} that observe that the convergence of \gmres for
these systems is more naturally analyzed in terms of $a$ (the center of the
eigenvalue cluster), $bc$ (the spread of the eigenvalues) and $b/c$ (the
non-normality).  Parameterizing the latter two via $\tau^2$ and $\xi^2$,
respectively, is for ease of later calculation.  We avoid defining the
parameters via square roots in order to avoid unnecessary consideration of
matters concerning branch cuts.  Thus, the convergence can be expressed as 
a function of $(a, \tau, \xi)$ and of $\bw_1$.

The expressions in \eqref{eqn:gmres-gram-determinant-subspace-angle} give us a direct expression for the 
residual norms as a function of these parameters.  
Note that we \Wlog we consider the relative residual behavior and assume $\norm[auto]{\br_0}=1$.
We prefer to analyze the Gram determinant formulation, 
but we do not foreclosed the possibility of using the subspace angle formulation to carry out a similar analysis.  Let 
$\bF: (a, \tau, \xi, \bw)\rightarrow \R^n$ be a vector-valued function whose components $F_1, F_2, \ldots, F_{n-1}$ 
are defined via 
\begin{align*}
	F_k
	=
	F_k(a, \tau, \xi, \bw_1) 
	\coloneq
	\frac{
		\det\prn{K_{k+1}^\ast K_{k+1}}
	}{
		\det\brackets{\prn{AK_k}^\ast AK_k}
	}
	\quad 
	\mbox{where} 
	\quad 
	A 
	= 
	\tritoep{a}{\tau\xi}{\tau/\xi}
	,
\end{align*}
with $K_j$ denoting the first $j$ Krylov basis vectors \eqref{eqn:krylov-matrix}. It is clear that $\bF$
maps parameters to residual 
curves. Investigating the regularity of $\bF$ gives us insights into what residual curves are attainable. 
\begin{remark}\label{remark:real-analysis-of-F}
	For the analysis of regularity of $\bF$, we reduce down to the case of real $A$ and $\bb$.  This simplifies the calculations 
	and keeps the exposition clear. In this setting, we observe that there are two distinct parameter regimes: $bc > 0$ and 
	$bc < 0$, which are separated by the case $bc=0$ 
    corresponding to Jordan blocks and their transposes.  \Wlog, we can 
	work implicitly in the $bc>0$ regime.  The results hold via continuation to the $bc<0$ regime, and Jordan blocks 
	are the degenerate case not covered by analysis of regularity.
\end{remark}

Following \Cref{remark:real-analysis-of-F}, we analyze the vector-valued
function $\bF:\R^{n+2}\rightarrow \R^{n-1}$. Thus, the Jacobian evaluated at a
point $\bF'(a, \tau, \xi, \bw)\in\R^{(n-1)\times (n+2)}$ is a matrix.  We split
the analysis into two parts: the columns of $\bF'$ associated to the $A$ (\ie,
\wrt $a$, $\tau$, and $\xi$) and the part associated to (the free parameters
determining) $\bw_1$.

\subsubsection*{Differentiating the residual curve \wrt $a$, $\tau$, and $\xi$}
For differentiating each $F_k$, we use the logarithm trick to separate the
quotient into a different and transform the determinant into the trace, 
\begin{align}
	\log F_k 
	=&
	\log \det\prn{K_{k+1}^T K_{k+1}}
	- 
	\log \det\brackets{\prn{AK_k}^T AK_k}
	\nonumber
	\\
	=&
	\trace \prn{K_{k+1}^T K_{k+1}} - \trace \brackets{\prn{AK_k}^T AK_k}
	\nonumber
	\\ 
	\iff 
	\frac{\partial}{\partial\theta} 
	F_k 
	=& 
	F_k 
	\frac{\partial}{\partial\theta} 
	\brackets{
		\trace \prn{K_{k+1}^T K_{k+1}} - \trace \brackets{\prn{AK_k}^T AK_k}
	}
	\label{eqn:log-based-residual-norm-deriv}
\end{align}
where $\theta$ is any variable, and we have used the well-known differentiation
formula for $\log$ of a function. Thus, every entry of the Jacobian is obtained
via a computation of this sort for $\theta = a,\ \tau,\ \xi$.  For notation
simplification during differentiation, we denote 
\begin{align}
	P_k 
	\coloneq 
	K_{k+1}^T K_{k+1} 
	=& 
	\prn{
		\bb^T(A^{j-1})^TA^{\ell-1}\bb
	}_{j,\ell}
	\nonumber
	\\ 
	Q_k 
	\coloneq 
	(AK_k)^T AK_k 
	=& 
	\prn{
		\bb^T(A^j)^TA^\ell\bb
	}_{j,\ell}
	.
	\label{eqn:Pk-and-Qk-definition}
\end{align}
Since differentiation is linear, it passes through the trace and is applied directly to the entries of $P_k$ and $Q_k$. 
\begin{lemma}\label{lemma:Al-gen-deriv}
	For any parameter $\theta$, we have the matrix powers derivative 
	\begin{align}
		\frac{\partial}{\partial\theta} 
		A^j 
		= 
		\sum_{i=0}^{j-1} A^i
		\cdot 
		\frac{\partial}{\partial\theta}A
		\cdot 
		A^{j-1-i}
		.
		\label{eqn:Al-gen-deriv}
	\end{align}
\end{lemma}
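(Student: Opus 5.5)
Since $A$ depends on $\theta$, the natural route is induction on $j$ via the product rule, using only that matrix multiplication is bilinear, so that $\frac{\partial}{\partial\theta}(XY) = \frac{\partial X}{\partial\theta}\,Y + X\,\frac{\partial Y}{\partial\theta}$ for matrix-valued functions $X,Y$ of $\theta$. This holds entrywise because each entry of $XY$ is a finite sum of products of scalar functions. I would assume $A=\tritoep{a}{\tau\xi}{\tau/\xi}$ is differentiable in $\theta\in\{a,\tau,\xi\}$ on the regime $\xi\neq 0$ singled out in \Cref{remark:real-analysis-of-F}; this is clear since every entry is a rational function of the parameters.

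For the base case $j=1$, the right-hand side of \eqref{eqn:Al-gen-deriv} has the single term $i=0$, namely $A^0\cdot\frac{\partial A}{\partial\theta}\cdot A^0=\frac{\partial A}{\partial\theta}$, which is the claim. It is also consistent to note that for $j=0$ both sides vanish, with the empty sum on the right. For the inductive step, I would write $A^{j+1}=A\cdot A^{j}$ and apply the product rule:
\begin{align*}
	\frac{\partial}{\partial\theta}A^{j+1}
	=
	\frac{\partial A}{\partial\theta}\,A^{j}
	+
	A\sum_{i=0}^{j-1}A^{i}\,\frac{\partial A}{\partial\theta}\,A^{j-1-i}
	.
\end{align*}
Reindexing the sum with $i\mapsto i+1$ turns the second term into $\sum_{i=1}^{j}A^{i}\frac{\partial A}{\partial\theta}A^{j-i}$. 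The first term is exactly the missing $i=0$ term $A^0\frac{\partial A}{\partial\theta}A^{j}$, so together they give $\sum_{i=0}^{j}A^{i}\frac{\partial A}{\partial\theta}A^{(j+1)-1-i}$, which closes the induction.

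There is no real obstacle. The one point I would flag is that the order of the factors must be kept, since $A$ and $\frac{\partial A}{\partial\theta}$ need not commute; for example, $\frac{\partial A}{\partial\xi}=\tau S^T-\tau\xi^{-2}S$ does not commute with $A$ in general. For this reason I would not collapse the result to $jA^{j-1}\frac{\partial A}{\partial\theta}$. The only exception is $\theta=a$, where $\frac{\partial A}{\partial a}=I$ and the formula reduces to $jA^{j-1}$, a remark that will be useful when the lemma is substituted into the entries of $P_k$ and $Q_k$ in \eqref{eqn:Pk-and-Qk-definition}.
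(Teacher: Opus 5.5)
Your induction via the product rule is correct and is essentially the paper's proof; the paper peels off the right factor ($A^j = A^{j-1}\cdot A$, base case $j=2$) while you peel off the left one, which is immaterial. One small correction to your closing side remark: $\theta=a$ is not the only commuting case, since $\frac{\partial A}{\partial\tau}=\frac{1}{\tau}\prn{A-aI}$ also commutes with $A$, which is why \Cref{corollary:Al-spec-deriv} collapses the $\tau$-derivative as well.
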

\begin{proof}
	This is a standard result proved by induction.  The base case is given by the usual product rule, 
	\begin{math}
		\frac{\partial}{\partial\theta} 
		A^2 
		= 
		\frac{\partial}{\partial\theta}
		A 
		\cdot 
		A 
		+ 
		A 
		\cdot 
		\frac{\partial}{\partial\theta} 
		A
		,
	\end{math}
	and for the inductive step, we show it for $A^j$ assuming it's true for $A^{j-1}$,
	\begin{align*}
		\frac{\partial}{\partial\theta} 
		A^j 
		=&
		\frac{\partial}{\partial\theta}
		A^{j-1} 
		\cdot 
		A 
		+ 
		A ^{j-1}
		\cdot 
		\frac{\partial}{\partial\theta} 
		A
		\\ 
		=&
		\prn{
			\sum_{i=0}^{j-2} A^i
			\cdot 
			\frac{\partial}{\partial\theta}A
			\cdot 
			A^{j-2-i}
		}
		\cdot 
		A 
		+ 
		A ^{j-1}
		\cdot 
		\frac{\partial}{\partial\theta} 
		A
		,
	\end{align*}
	which when multiplied and combined into one sum completes the proof.
\end{proof}
A key fact that is helpful in computing these derivatives is understanding for
which $\theta$ $A$ commutes with $\frac{\partial}{\partial\theta} A$.
\begin{corollary}\label{corollary:Al-spec-deriv}
	For $\theta= a,\ \tau,\ \xi$, \eqref{eqn:Al-gen-deriv} simplifies to 
	\begin{align}
		\Dparam{a}A^\ell 
		=& 
		\ell A^{\ell-1}
		\label{eqn:Al-deriv-a}
		\\ 
		\frac{\partial}{\partial\tau}A^\ell 
		=& 
		\frac{\ell}{\tau}
		\prn{
			A^\ell 
			- 
			a A^{\ell-1}
		}
		\label{eqn:Al-deriv-tau}
		\\ 
		\frac{\partial}{\partial\xi}A^\ell 
		=& 
		\sum_{i=0}^{\ell-1} A^i
		\cdot
		\prn{
			\tau S^T - \frac{\tau}{\xi^2}S
		}
		\cdot 
		A^{\ell-1-i}
		\label{eqn:Al-deriv-xi}
		.
	\end{align}
\end{corollary}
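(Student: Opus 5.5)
Each identity follows by specializing \Cref{lemma:Al-gen-deriv} once $\partial A/\partial\theta$ is computed from the splitting \eqref{eqn:trdiag-Toepl-sum}. In the new variables,
\begin{align*}
	A = aI + \tau\xi\, S^T + \frac{\tau}{\xi}\, S .
\end{align*}
Differentiating term by term gives
\begin{align*}
	\Dparam{a}A = I,
	\qquad
	\frac{\partial}{\partial\tau}A = \xi S^T + \frac{1}{\xi}S = \frac{1}{\tau}\prn{A - aI},
	\qquad
	\frac{\partial}{\partial\xi}A = \tau S^T - \frac{\tau}{\xi^2}S .
\end{align*}

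For $\theta = a$ and $\theta=\tau$, the derivative $\frac{\partial}{\partial\theta}A$ is a polynomial in $A$, so it commutes with every power $A^i$. Hence each summand in \eqref{eqn:Al-gen-deriv} can be reordered as $\frac{\partial}{\partial\theta}A\cdot A^{\ell-1}$. Since the sum has $\ell$ identical terms, it collapses to $\ell\,\frac{\partial A}{\partial\theta}\,A^{\ell-1}$.

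For $a$ this is $\ell A^{\ell-1}$, which is \eqref{eqn:Al-deriv-a}. For $\tau$ it is
\begin{align*}
	\frac{\ell}{\tau}\prn{A-aI}A^{\ell-1} = \frac{\ell}{\tau}\prn{A^\ell - aA^{\ell-1}},
\end{align*}
which is \eqref{eqn:Al-deriv-tau}; this uses $\tau\neq 0$, which holds under \Cref{assumption:bc-neq-0}. For $\xi$ the derivative $\tau S^T - \tau\xi^{-2}S$ does not in general commute with $A$. Indeed, $[S^T,S]$ is nonzero, being supported only at the corner entries $(1,1)$ and $(n,n)$, so no simplification is available. Substituting directly into \eqref{eqn:Al-gen-deriv} yields \eqref{eqn:Al-deriv-xi} verbatim.

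The only step requiring care is the collapse of the sum, which rests on the commutation observation. It is worth stating explicitly that this commutation fails for $\xi$, since that explains why \eqref{eqn:Al-deriv-xi} remains an unsimplified sum.
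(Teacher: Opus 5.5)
Your proof is correct and follows the paper's argument. You compute $\partial A/\partial\theta$ from the splitting and observe that it equals $I$ and $\tau^{-1}(A-aI)$ for $a$ and $\tau$, so it commutes with $A$ and collapses the sum in \eqref{eqn:Al-gen-deriv}, while for $\xi$ you substitute $\tau S^T-\tau\xi^{-2}S$ directly; your extra observation that $[S^T,S]=e_ne_n^T-e_1e_1^T\neq 0$ correctly explains why no collapse occurs for $\xi$, though the identity itself does not need it.
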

\begin{proof}
	We remind the reader that we represent
	$A=\tritoep{a}{\tau\xi}{\tau/\xi}$. For the derivative \wrt $a$, one
	simply observes that $\Dparam{a}A=I$; and since it
	commutes with $A$, this collapses \eqref{eqn:Al-gen-deriv} yielding the
	result. 

	Similarly, $\Dparam{\tau}A=\xi S^T + \frac{1}{\xi} S$. Observing that 
	$b=\tau\xi\iff\xi=b/\tau$, and $c=\tau/\xi\iff1/\xi = c/\tau$, we can rewrite 
	$\Dparam{\tau}A= \frac{1}{\tau}\prn{A-aI}$ which when inserted into 
	\eqref{eqn:Al-gen-deriv}, collapses it yielding the result. 

	The derivative \wrt $\xi$ does not yield any simplification. This follows from the fact that 
	$\Dparam{\xi}A=\tau S^T - \frac{\tau}{\xi^2}S$, which we simply insert into 
	\eqref{eqn:Al-gen-deriv} since it does not commute with $A$.  This completes the proof.
\end{proof}
The equations (\ref{eqn:Al-deriv-a},\ref{eqn:Al-deriv-tau}) demonstrate that $A$ commutes with its derivatives 
\wrt to $a$ and $\tau$, but \eqref{eqn:Al-deriv-xi} shows it does not commute with its derivative \wrt to $\xi$, and this has 
consequences for how we compute elements of the $\bF'$.  Thus, we treat differentiation \wrt $\xi$ separately, as it requires 
additional development. For differentiation \wrt $a$ and $\tau$, another corollary follows directly from \Cref{corollary:Al-spec-deriv}. 
\begin{lemma}\label{lemma.resid-norm-deriv-a-tau}
	The derivatives of $\norm[auto]{\br_k}$ \wrt to $a$ and $\tau$ admit the expressions 
	\begin{align}
		\Dparam{a}
		\norm[auto]{\br_k}
		=&
		- \norm[auto]{\br_k} 
		\by_k(1)		
		\\ 
		\Dparam{\tau}
		\norm[auto]{\br_k}
		=&
		\norm[auto]{\br_k}
		\frac{a}{\tau} 
		\by_k(1) 
		,
	\end{align}
	where $\by_k$ is the solution of the small \gmres least-squares problem \wrt the Krylov power basis $K_k$
	\begin{align}
		\by_k 
		= 
		\argmin_{\by\in\R^k}\norm[auto]{A K_k\by - \bb}
		.
		\label{eqn:gmres-small-ls}
	\end{align}
\end{lemma}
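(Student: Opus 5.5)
We would bypass the Gram-determinant formula in \eqref{eqn:log-based-residual-norm-deriv} and differentiate the least-squares characterization of $\br_k$ directly, using an envelope-theorem argument. Following \Cref{remark:real-analysis-of-F}, we work with real $A$ and $\bb$. We also take $\bb$ fixed, independent of $\theta\in\braces{a,\tau}$. Write $\phi(\by,\theta)\coloneq\norm[auto]{\bb-AK_k\by}^2$, so that $\norm[auto]{\br_k}^2=\phi(\by_k(\theta),\theta)$, with $\by_k$ as in \eqref{eqn:gmres-small-ls}.

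Under the standing assumption of no early termination, $AK_k$ has full column rank. Hence $\by_k=\prn{(AK_k)^TAK_k}^{-1}(AK_k)^T\bb$ depends smoothly on $\theta$. Since $\nabla_{\by}\phi(\by_k,\theta)=\bnull$ by the normal equations, the chain rule gives
\begin{align*}
	\Dparam{\theta}\norm[auto]{\br_k}^2
	=
	\partial_\theta\phi(\by_k,\theta)
	=
	-2\,\br_k^T\prn{\Dparam{\theta}(AK_k)}\by_k .
\end{align*}
Because the $j$th column of $AK_k$ is $A^j\bb$, we have $\prn{\Dparam{\theta}(AK_k)}\by_k=\sum_{j=1}^k \by_k(j)\,\Dparam{\theta}A^j\,\bb$. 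Each derivative $\Dparam{\theta}A^j$ is supplied by \Cref{corollary:Al-spec-deriv}.

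For $\theta=a$, \eqref{eqn:Al-deriv-a} turns the sum into $\sum_j j\,\by_k(j)A^{j-1}\bb$. By \eqref{eqn:gmres-resid-orth-constraint}, $\br_k\perp A\bb,\ldots,A^k\bb$, so every term with $j\geq2$ vanishes against $\br_k$. Only $\by_k(1)\,\br_k^T\bb$ survives. Writing $\bb=\br_k+AK_k\by_k$ and using orthogonality again gives $\br_k^T\bb=\norm[auto]{\br_k}^2$. Therefore $\Dparam{a}\norm[auto]{\br_k}^2=-2\by_k(1)\norm[auto]{\br_k}^2$. Dividing by $2\norm[auto]{\br_k}$ (nonzero by assumption) gives the first formula.

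For $\theta=\tau$, \eqref{eqn:Al-deriv-tau} gives $\Dparam{\tau}A^j\bb=\frac{j}{\tau}\prn{A^j\bb-aA^{j-1}\bb}$. The pieces $A^j\bb$ for $j\geq1$ and $A^{j-1}\bb$ for $j\geq2$ all lie in $A\cK_k(A,\bb)$, so they are orthogonal to $\br_k$. The only surviving contribution is $-\frac{a}{\tau}\by_k(1)\bb$. The same identity $\br_k^T\bb=\norm[auto]{\br_k}^2$ then yields $\Dparam{\tau}\norm[auto]{\br_k}^2=2\frac{a}{\tau}\by_k(1)\norm[auto]{\br_k}^2$, which gives the second formula.

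The main point requiring care is justifying the envelope step: the dependence of $\by_k$ on $\theta$ must drop out. This follows from the normal equations together with the smoothness of $\by_k$, which full column rank guarantees. The remaining calculations are routine once orthogonality is used to kill all but the $j=1$ term.
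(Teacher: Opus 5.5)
Your proof is correct, and it takes a genuinely different route from the paper's.

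\textbf{The paper's route.} It starts from the Gram-determinant identity $\|\br_k\|^2=\det P_k/\det Q_k$ and differentiates the logarithm via $\partial\log\det P=\operatorname{tr}(P^{-1}\partial P)$. It then writes $\partial_a P_k$ and $\partial_a Q_k$ through entrywise recurrences encoded by an index-scaled shift matrix. The $P_k$ contribution has zero trace. This reflects the fact that shifting $A$ changes $K_{k+1}$ only by a unit upper-triangular factor, so $\det P_k$ does not depend on $a$. The only surviving term is the rank-two correction $\be_1\bp_k^T+\bp_k\be_1^T$ in $\partial_a Q_k$, which gives $\operatorname{tr}(Q_k^{-1}(\be_1\bp_k^T+\bp_k\be_1^T))=2\be_1^TQ_k^{-1}\bp_k=2\by_k(1)$. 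The $\tau$ case is handled the same way, with a diagonal index matrix whose contributions from $P_k$ and $Q_k$ cancel.

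\textbf{Your route.} You replace all of this with two ingredients: stationarity of the least-squares objective at $\by_k$ (the envelope step), and the orthogonality $\br_k\perp A\cK_k(A,\bb)$. Together these kill every derivative term except the one landing on $\bb$ itself, and $\br_k^T\bb=\|\br_k\|^2$ finishes the computation.

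\textbf{What each buys.} Your version is shorter and needs neither determinants, Jacobi's formula, nor the index-shift bookkeeping. It also makes transparent why only $\by_k(1)$, the coefficient of $A\bb$, appears. The same two lines, with $\partial_\xi A^j=\xi^{-1}[N,A^j]$, would also yield the $\xi$-derivative without the pseudoinverse ansatz used in the appendix. The paper's route, in exchange, stays within the Gram-determinant definition of $F_k$ used throughout that section. It also shows explicitly which Gram matrix carries the dependence on each parameter.

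Your justification of the envelope step is adequate. Full grade of $\bb$ gives full column rank of $AK_k$ for $k\le n-1$, hence smooth dependence of $\by_k$ on the parameters and $\br_k\neq 0$.
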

For exposition clarity, we put the proof in \Cref{section:resid-norm-deriv-a-tau-PROOF}.

\begin{corollary}
	The gradients $\frac{\partial\bF}{\partial a}$ and $\frac{\partial\bF}{\partial \tau}$ are 
	collinear.
\end{corollary}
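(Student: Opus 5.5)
The plan is to read the corollary off \Cref{lemma.resid-norm-deriv-a-tau}, applied component by component. The $k$th component of $\frac{\partial\bF}{\partial a}$ is the derivative of the $k$th residual quantity with respect to $a$; likewise for $\tau$. I will state everything in terms of $\norm[auto]{\br_k}$. If $F_k$ is read as the squared residual norm, as in \eqref{eqn:gmres-gram-determinant-subspace-angle}, then the chain rule gives $\Dparam{\theta}F_k = 2\norm[auto]{\br_k}\Dparam{\theta}\norm[auto]{\br_k}$. This multiplies both partials by the same factor $2\norm[auto]{\br_k}$, so it cannot affect collinearity.

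By \Cref{lemma.resid-norm-deriv-a-tau}, for every $k=1,\ldots,n-1$ we have
\begin{align*}
	\Dparam{\tau}\norm[auto]{\br_k}
	=
	\norm[auto]{\br_k}\frac{a}{\tau}\by_k(1)
	=
	-\frac{a}{\tau}\Dparam{a}\norm[auto]{\br_k}.
\end{align*}
The essential observation is that the proportionality constant $-a/\tau$ does not depend on $k$. The factor $\norm[auto]{\br_k}\by_k(1)$ does depend on $k$, but it appears identically in both partials. Stacking over $k$ then gives
\begin{align*}
	\frac{\partial\bF}{\partial\tau} = -\frac{a}{\tau}\frac{\partial\bF}{\partial a},
\end{align*}
which is collinearity.

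The degenerate cases need a remark. Under \Cref{assumption:bc-neq-0} we have $\tau\neq0$, so the ratio is well defined. If $a=0$, the relation says $\frac{\partial\bF}{\partial\tau}=\bnull$, and the zero vector is trivially collinear with any vector. One could also phrase the result as $a\,\frac{\partial\bF}{\partial a}+\tau\frac{\partial\bF}{\partial\tau}=\bnull$, which avoids dividing by $\tau$ altogether.

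As a sanity check I will also give an independent route, via homogeneity. Scaling $(a,\tau)\mapsto(sa,s\tau)$ with $\xi$ fixed replaces $A$ by $sA$. Krylov spaces are scale invariant, so the residuals, and hence $\bF$, are unchanged. Differentiating at $s=1$ gives exactly the Euler relation $a\partial_a\bF+\tau\partial_\tau\bF=\bnull$. This confirms the sign and constant without relying on the details of the appendix proof.

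I expect the only real obstacle to be bookkeeping: checking that $F_k$, whether taken as the norm or its square, enters both partials through the same $k$-dependent factor. The scaling argument settles this cleanly regardless of which convention is used.
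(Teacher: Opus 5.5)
Your argument is correct and matches the paper's: the corollary is stated immediately after \Cref{lemma.resid-norm-deriv-a-tau} and follows from it exactly as you say, because $\partial\|\br_k\|/\partial\tau=-(a/\tau)\,\partial\|\br_k\|/\partial a$ with a ratio independent of $k$, and passing from $\|\br_k\|$ to $F_k=\|\br_k\|^2$ only introduces the common factor $2\|\br_k\|$. Your scaling argument via $A\mapsto sA$ is a worthwhile independent check: it yields $a\,\partial\bF/\partial a+\tau\,\partial\bF/\partial\tau=\bnull$ without the appendix computation (whose intermediate $\tau$-derivative formulas are loose, e.g.\ a missing factor $1/\tau$), and it holds whether $\bb$ or $\bw_1$ is held fixed.
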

One can interpret this as the parameters $a$ and $\tau$ concerning the spectrum both having similar responses to 
small perturbations, as it concerns changes in \gmres convergence behavior.  

Since the derivative \wrt $\xi$ does not commute with $A$, we adjust our approach for the normality parameter. 
Important for encoding this lack of commuting is the commutator, whose definition we recall to the reader.  
\begin{definition}
	Let $X$ and $Y$ be two square matrices of dimension greater than one.  We denote as 
	\emph{the commutator} the residual of commutativity 
	\begin{align*}
		[X,Y] 
		=
		XY-YX 
		,
	\end{align*}
	which is zero in the case that $X$ and $Y$ commute multiplicatively.
\end{definition}
The commutator is useful in a variety of fields, and is an important tool in theoretical physics; 
see, \eg \cite{MagnusNeudecker:1979:1}.  Among its many interesting properties, it has a product rule 
reminiscent of the differentiation product rule; \ie 
\begin{align}
	[N,XY] 
	= 
	[N,X]Y 
	+ 
	X[N,Y]
	\label{eqn:commutator-product-rule}
	,
\end{align}
which we have shown in \eqref{eqn:commutator-product-rule} in the right-hand argument, \Wlog
From this, an identity involving integer powers can be derived (see, \eg, \cite{MagnusNeudecker:1979:1})
\begin{align}
	[N,A^\ell] 
	= 
	\sum_{i=0}^{\ell-1} 
	A^i 
	[N,A]
	A^{\ell-i-1}
	\label{eqn:commutator-product-rule-integer-powers}
	.
\end{align}
Let $A = Z\Lambda Z^{-1}$ be the eigendecomposition, with $\Lambda$ and $Z$ having entries as described 
in \eqref{eqn:Toeptliz-eig}. Applying the product rule,  we differentiate the eigendecomposition 
\begin{align*}
	\Dparam{\xi} 
	A 
	= 
	\prn{\Dparam{\xi}Z}\Lambda Z^{-1}
	+
	Z\prn{\Dparam{\xi}\Lambda} Z^{-1}
	+
	Z\Lambda\prn{\Dparam{\xi}Z^{-1}}
	= 
	\prn{\Dparam{\xi}Z}\Lambda Z^{-1} 
	+
	Z\Lambda\prn{\Dparam{\xi}Z^{-1}}
	,
\end{align*}
since $\Lambda$ is purely a function of $a$ and $\tau$ in this parameterization.
From the formula for entries of $Z$ from \eqref{eqn:Toeptliz-eig}, once sees that 
it is simply constant multiples of powers of $\xi$.  At this matrix level, 
this means that 
\begin{align*}
	\Dparam{\xi} Z 
	= 
	\frac{1}{\xi} N Z 
	,
\end{align*}
where $N\coloneq N_n$ is the dimension $n$ diagonal indexing matrix.  Differentiating the 
equation $ZZ^{-1}=I$ and solving yields 
\begin{align*}
	\Dparam{\xi} Z^{-1} 
	=
	-Z^{-1}
	\prn{\frac{1}{\xi} N Z} 
	Z^{-1} 
	= 
	-\frac{1}{\xi} 
	Z^{-1} N 
	,
\end{align*}
which when substituted back into the expression for $\Dparam{\xi}A$ yields 
\begin{align}
	\Dparam{\xi} 
	A 
	= 
	\frac{1}{\xi}
	\prn{
		NA - AN
	} 
	= 
	\frac{1}{\xi} 
	[N,A] 
	. 
\end{align}
\begin{lemma}
	It follows that 
	\begin{align*}
		\Dparam{\xi} A^\ell 
		= 
		\frac{1}{\xi} 
		[N, A^\ell]
		,
	\end{align*}
	for integers $\ell \geq 0$.
\end{lemma}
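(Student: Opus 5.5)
Combine the general power-derivative formula of \Cref{lemma:Al-gen-deriv} with the identity $\Dparam{\xi}A = \frac{1}{\xi}[N,A]$ just derived, and match the result against the commutator power identity \eqref{eqn:commutator-product-rule-integer-powers}. Setting $\theta=\xi$ in \eqref{eqn:Al-gen-deriv} gives
\begin{align*}
	\Dparam{\xi}A^\ell
	=
	\sum_{i=0}^{\ell-1} A^i \prn{\Dparam{\xi}A} A^{\ell-1-i}
	=
	\frac{1}{\xi}\sum_{i=0}^{\ell-1} A^i [N,A] A^{\ell-1-i}
	,
\end{align*}
and the right-hand sum is exactly $[N,A^\ell]$ by \eqref{eqn:commutator-product-rule-integer-powers}. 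The edge cases are immediate. For $\ell=0$, $A^0=I$ is constant, so both sides vanish because $[N,I]=0$. For $\ell=1$, the claim is the derived identity itself.

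For completeness I would also verify \eqref{eqn:commutator-product-rule-integer-powers} by induction on $\ell$. Apply the product rule \eqref{eqn:commutator-product-rule} to $[N,A^{\ell}]=[N,A^{\ell-1}A]=[N,A^{\ell-1}]A + A^{\ell-1}[N,A]$ and insert the inductive hypothesis. This mirrors the induction in \Cref{lemma:Al-gen-deriv}. Alternatively, one can bypass the sum entirely and differentiate $A^\ell = Z\Lambda^\ell Z^{-1}$ directly, using $\Dparam{\xi}\Lambda=0$, $\Dparam{\xi}Z=\frac1\xi NZ$ and $\Dparam{\xi}Z^{-1}=-\frac1\xi Z^{-1}N$. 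This yields $\frac1\xi(NA^\ell - A^\ell N)$ at once, and it serves as a cross-check.

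The main obstacle is not the algebra but the validity of the inputs. The identity $\Dparam{\xi}A=\frac1\xi[N,A]$ relies on the eigendecomposition \eqref{eqn:Toeptliz-eig}, so it requires $bc\neq 0$ (\Cref{assumption:bc-neq-0}) and $Z$ invertible and differentiable in $\xi$. The scalar normalization of the columns of $Z$ can be dropped, since that freedom cancels in $Z\Lambda Z^{-1}$.

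I would close this gap by checking $\frac1\xi[N,A]$ directly in the $(a,\tau,\xi)$ parameterization, which needs no eigendecomposition at all. First, $[N,I]=0$. Second, $NS^T - S^TN = S^T$, since the subdiagonal entries of $S^T$ sit at $(j+1,j)$ and pick up $(j+1)-j=1$. Third, $NS - SN = -S$. Hence
\begin{align*}
	\frac1\xi[N,A] = \frac1\xi\prn{\tau\xi S^T - \tfrac{\tau}{\xi}S} = \tau S^T - \tfrac{\tau}{\xi^2}S ,
\end{align*}
which agrees with \Cref{corollary:Al-spec-deriv}. So the lemma holds for all $\xi\neq 0$ and every integer $\ell\ge 0$.
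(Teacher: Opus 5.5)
Your proof is correct and is the paper's argument: substitute $\Dparam{\xi}A=\frac{1}{\xi}[N,A]$ into \Cref{lemma:Al-gen-deriv} and recognize the resulting sum as \eqref{eqn:commutator-product-rule-integer-powers}. Your additions are also correct and make the proof self-contained. These are the induction for the commutator identity and the direct check $[N,S^T]=S^T$, $[N,S]=-S$, which gives $\Dparam{\xi}A=\frac{1}{\xi}[N,A]$ without the eigendecomposition and hence for every $\xi\neq 0$.
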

\begin{proof}
	From \Cref{lemma:Al-gen-deriv}, it follows that $\Dparam{\xi}A^\ell = \sum_{i=0}^{\ell-1} A^i\prn{\frac{1}{\xi}[N,A]}A^{\ell-i-1}$,
	and the proof is completed observing that this exactly the identity concerning commutators of integer powers \eqref{eqn:commutator-product-rule-integer-powers}
\end{proof}
This gives us the tools we need to differentiate \wrt $\xi$. 
\begin{lemma}\label{lemma:resid-norm-deriv-xi}
	The derivative of the residual norm \wrt the normality parameter $\xi$ can be expressed as 
	\begin{align}
		\Dparam{\xi} 
		\norm[auto]{\br_k} 
		= 
		\frac{1}{\xi} 
		\cdot 
		\frac{
			\br_k^T [N, \Psi_k(A)]b
		}{
			\norm[auto]{\br_k}^2
		}
		,
	\end{align}
	where $\Psi_k\prn{z}$ is the $k$-th \gmres residual polynomial.
\end{lemma}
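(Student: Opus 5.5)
The plan is to avoid the Gram-determinant formulation and use the polynomial characterization of \gmres. For fixed $(a,\tau)$ and $\bb$, the residual is
\begin{align*}
	\br_k = \Psi_k(A)\bb ,
	\qquad
	\norm[auto]{\br_k}^2 = \min_{\substack{\psi\in\mathcal{P}_k\\ \psi(0)=1}} \norm[auto]{\psi(A)\bb}^2 .
\end{align*}
We write $\psi(z)=1-\sum_{j=1}^k y_j z^j$, so that $\psi(A)\bb = \bb - AK_k\by$. Then $\norm[auto]{\br_k}^2=\min_{\by}\phi(\by,\xi)$ with $\phi(\by,\xi)\coloneq\norm[auto]{\bb-AK_k\by}^2$, and the minimizer is the vector $\by_k$ of \eqref{eqn:gmres-small-ls}. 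Here $A$ depends on $\xi$ through $A=\tritoep{a}{\tau\xi}{\tau/\xi}$.

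First I would use an envelope-theorem argument. Because we assume no early termination, $AK_k$ has full column rank, so $\by_k(\xi)$ is unique and depends smoothly on $\xi$ through the normal equations. Since $\nabla_{\by}\phi(\by_k,\xi)=\bnull$, the chain rule gives
\begin{align*}
	\frac{d}{d\xi}\norm[auto]{\br_k}^2=\frac{\partial\phi}{\partial\xi}(\by_k,\xi).
\end{align*}
In other words, we may differentiate $\norm[auto]{\Psi_k(A)\bb}^2$ with the coefficients of $\Psi_k$ frozen. This is the same mechanism that I expect underlies the proof of \Cref{lemma.resid-norm-deriv-a-tau}.

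Second, with the coefficients frozen, $\Dparam{\xi}\Psi_k(A)=-\sum_j y_j\Dparam{\xi}A^j$. By the preceding lemma, $\Dparam{\xi}A^j=\frac1\xi[N,A^j]$, and the commutator is linear in its second argument while $[N,I]=0$. Hence
\begin{align*}
	\Dparam{\xi}\Psi_k(A)=\frac1\xi[N,\Psi_k(A)] ,
\end{align*}
and therefore
\begin{align*}
	\frac{d}{d\xi}\norm[auto]{\br_k}^2=\frac{2}{\xi}\,\br_k^T[N,\Psi_k(A)]\bb .
\end{align*}
In the real setting of \Cref{remark:real-analysis-of-F} this uses that $\br_k^T X\bb$ equals $\bb^TX^T\br_k$.

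Finally I would convert to the stated quantity, and this normalization step is where I expect the main care to be needed. Dividing by $2\norm[auto]{\br_k}$ gives $\frac{1}{\xi}\br_k^T[N,\Psi_k(A)]\bb/\norm[auto]{\br_k}$. The displayed formula, with $\norm[auto]{\br_k}^2$ in the denominator, is instead $\Dparam{\xi}\log\norm[auto]{\br_k}$, which is the log-derivative convention of \eqref{eqn:log-based-residual-norm-deriv} used to build the Jacobian of $\bF$. I would state explicitly which of these two is meant and check it against the normalization $\norm[auto]{\br_0}=1$ before finalizing the statement. The remaining routine check is the smoothness of $\by_k$ in $\xi$ (on the regime $bc>0$), which justifies the envelope step.
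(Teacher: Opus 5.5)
Your argument is correct, and it reaches the result by a genuinely different and much shorter route than the paper.

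\textbf{How the two routes differ.} The paper works from the Gram-determinant expression for $F_k=\norm[auto]{\br_k}^2$. It proceeds in four steps:
\begin{enumerate}
\item it differentiates $\log\det P_k-\log\det Q_k$;
\item it writes $\xi\Dparam{\xi}P_k$ and $\xi\Dparam{\xi}Q_k$ in terms of $K_{k+1}$ and the Krylov matrix $\wtK$ of $N\bb$;
\item it turns one pair of traces into the rank-one projector $\Pi_{k+1}-\Phi_k=\br_k\br_k^T/\norm[auto]{\br_k}^2$, giving $\br_k^TN\br_k$, and the other, via an explicit block pseudoinverse of $K_{k+1}=[\,\bb,\ AK_k\,]$, into $\br_k^T\Psi_k(A)N\bb$;
\item only at the end does it obtain the commutator, through $\br_k=\Psi_k(A)\bb$.
\end{enumerate}
Your envelope-theorem step replaces all of this. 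Stationarity of $\phi$ in $\by$ at $\by_k$, with $\by_k$ smooth because $AK_k$ has full column rank, lets you differentiate $\norm[auto]{\Psi_k(A)\bb}^2$ with the coefficients frozen. The commutator then falls out of $\Dparam{\xi}A^j=\frac{1}{\xi}[N,A^j]$ by linearity.

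\textbf{What each approach buys.} Your mechanism also reproduces the $a$ and $\tau$ formulas of \Cref{lemma.resid-norm-deriv-a-tau} in one line each. It only needs $\br_k\perp A^j\bb$ for $1\le j\le k$ and $\br_k^T\bb=\norm[auto]{\br_k}^2$. The paper's route gives a direct tie to the determinant definition of $\bF$. It also produces the separated components \eqref{eqn:trace-component-1} and \eqref{eqn:trace-component-2} that are cited later. However, you recover these simply by splitting $[N,\Psi_k(A)]=N\Psi_k(A)-\Psi_k(A)N$.

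\textbf{On the normalization, commit rather than hedge.} A check against $\norm[auto]{\br_0}=1$ only fixes the $k=0$ norm and cannot reconcile anything. Your computation gives $\Dparam{\xi}\norm[auto]{\br_k}=\frac{1}{\xi}\,\br_k^T[N,\Psi_k(A)]\bb/\norm[auto]{\br_k}$. This is also what the paper's own route produces. Equation \eqref{eqn:log-based-residual-norm-deriv-xi} carries the prefactor $\norm[auto]{\br_k}/\xi$. Once the signs of the two $\wtK$ trace terms there are corrected, the bracket equals $\br_k^T[N,\Psi_k(A)]\bb/\norm[auto]{\br_k}^2$; as printed, those signs would produce $N\Psi_k(A)+\Psi_k(A)N$ instead of the commutator. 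So the displayed formula, with $\norm[auto]{\br_k}^2$ in the denominator, is $\Dparam{\xi}\log\norm[auto]{\br_k}$. It is a dropped factor of $\norm[auto]{\br_k}$, not a deliberate log-derivative convention: the companion $a$, $\tau$ formulas are stated for $\norm[auto]{\br_k}$ itself and carry that factor explicitly.
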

For exposition clarity, we put the proof in \Cref{section:resid-norm-deriv-xi-PROOF}.

\subsubsection*{Differentiating the residual curve \wrt $\bw_1$}
In comparison to differentiating \wrt the parameters determining $A$,
differentiating \wrt $\bw_1$ is relatively straightforward. We note that
equivalently, we can differentiate \wrt $\bb$ with the assumption that
$\norm[auto]{\bb}$ is a free parameter $\implies$ $n-1$ \dof), and this produces  a cleaner
expression. Differentiating \wrt $\bb$ and $\bw_1$ differs only by a constant
scaling.  So \Wlog, we work with $\bb$.  In either case these parameters enter
only through a bi-linear dependence.  Consider that 
\begin{align*}
	\Dparam{\bb}\norm[auto]{\br_k}^2 
	=& 
	\Dparam{\bb}\prn{\Psi_k\prn{A}\bb}^T\prn{\Psi_k\prn{A}\bb}
	\\
	\iff
	2\norm[auto]{\br_k}\Dparam{\bb}\norm[auto]{\br_k}
	=&
	\prn{\Dparam{\bb}\Psi_k\prn{A}\bb}^T\br_k 
	+ 
	\br_k^T\prn{\Dparam{\bb}\Psi_k\prn{A}\bb}
	\\
	\iff
	\Dparam{\bb}\norm[auto]{\br_k}
	=&
	\frac{1}{2}\cdot 2\br_k^T\Psi_k\prn{A} 
	= 
	\br_k^T\Psi_k\prn{A}
\end{align*}

\subsection{Regularity and the convergence envelope}\label{subsecton:regularity-conv-env}
Having the Jacobian $\bF'$ completely in hand opens up the possibility for many further explorations 
that are beyond the scope of our work; see, \cf \Cref{section:conclusions} for ruminations on some paths forward.  

We instead focus on closing the theory developed in \Cref{section:gmres-toeplitz-parameterization}, for a given, fixed 
$A=\tritoep{a}{\tau\xi}{\tau/\xi}$, \emph{can any admissible convergence curve in convergence envelope for $A$ be attained}?
\begin{remark}\label{remark:APS-non-fixed-A}
	Note that this is a slightly different question than the one treated in
	\cite{GreenbaumPtakStrakos:1996:1} and its extension to the
	APS-parameterization in \cite{ArioliPtakStrakos:1998:1}. That theory
	and by extension the theory we developed describes the simultaneous
	construction of pairs $(A,\bb)$ yielding a specific convergence curve.
	In the case of $A=\tritoep{a}{\tau\xi}{\tau/\xi}$, this corresponds to  
	an appropriate level of non-normality being dialed to enable the construction 
	of some right-hand side inducing that convergence pattern. The present follow-up
	question is asking what is possible if $A$ is fixed.
\end{remark}
Consider that it is well established that $a$ and $\tau$ determine the center
and cluster tightness of the eigenvalues, while $\xi$ controls the
non-normality.  Together, they determine the field of values $W(A)$.  Elman in
his thesis \cite{Elman:1982:1} showed the now well-known result that \gmres
will not exhibit any stagnating iterations for any right-hand side if $0\notin
W(A)$.  Thus, the upper boundary may be total stagnation if $0\in W(A)$, but it will be 
strictly decreasing with an upper bound described by Elman \cite{Elman:1982:1} otherwise. 

Let $\S^{n-1}$ denote the unit sphere in $\R^n$. For the best case lower-bound
convergence, we must consider the pathological case of early convergence.  We
can always construct $\bw_1\in\S^{n-1}$ with representation in the eigenbasis
having only $m\ll n$ non-zero components, meaning \gmres will converge to the
exact solution in at most $m$ iterations. Such right-hand sides live by
construction in at most dimension $n-1$ subspaces spanned by subsets of
eigenvectors; \ie they are a set of measure zero.  We exclude these right-hand
sides and consider only right-hand sides $\bw_1$ for which
$\cK_n(A,\bw_1)=A\cK(A,\bb)$ has \emph{full grade}; \ie it has dimension $n$.
The hyperplanes defined by proper subsets of eigenvectors divide the unit
sphere into components containing candidate full-grade $\bw_1$. We narrow the
question; within the convergence envelope of worst- and best-case full-run
\gmres convergence, are all admissible convergence curves attainable?

\begin{lemma}
	Let $\bF'_{\bw}(\bw_1)\in\R^{(n-1)\times(n-1)}$ denote the square section
	of the Jacobian $\bF'$ associated to convergence specification via
	choice of $\bw_1$.  For a given $\bw_1$, $F'_{\bw}(\bw_1)$ is singular
	if the resulting \gmres iteration exhibits one of the
	following: 
	\begin{itemize}
		\item early convergence to the exact solution;
		\item best or worst case convergence being attained at every iteration.
		\item one or more stagnating iterations;
	\end{itemize}
\end{lemma}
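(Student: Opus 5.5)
We plan to treat $\bF_{\bw}:\S^{n-1}\to\R^{n-1}$, $\bw_1\mapsto(\norm[auto]{\br_1},\ldots,\norm[auto]{\br_{n-1}})$, as a smooth map on the $(n-1)$-dimensional sphere (working in $\bb$ with $\norm[auto]{\bb}=1$, which differs from $\bw_1$ only by scaling). We then show that in each listed case the image of a neighbourhood of $\bw_1$ cannot fill an open set of $\R^{n-1}$, or that some $F_k$ has a vanishing tangential gradient. Either conclusion forces $\bF'_{\bw}(\bw_1)$ to be rank deficient. The basic tool is the gradient already computed, $\Dparam{\bb}\norm[auto]{\br_k}=\br_k^T\Psi_k(A)$ (up to the normalisation factor $1/\norm[auto]{\br_k}$). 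Its component tangent to the sphere is $\br_k^T\Psi_k(A)\prn{I-\bb\bb^T}$. These tangential gradients are the rows of $\bF'_{\bw}$.

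\emph{Stagnation.} Suppose $\norm[auto]{\br_k}=\norm[auto]{\br_{k-1}}$. Then $\br_k=\br_{k-1}$, and by \eqref{eqn:gmres-resid-orth-constraint} the new direction $A^k\bb$ contributes nothing to the residual. The natural argument is to note that $\bw_1$ then lies on the boundary of the constraint set $\{F_k\le F_{k-1}\}$. Monotonicity holds for every $\bb$, so $F_{k-1}-F_k\ge 0$ on all of $\S^{n-1}$ and attains its minimum $0$ at $\bw_1$. Hence $\nabla(F_{k-1}-F_k)$, restricted to the tangent space, vanishes, and rows $k-1$ and $k$ of $\bF'_{\bw}$ coincide (with the obvious convention $F_0\equiv1$ when $k=1$, which makes row $1$ zero). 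This gives singularity.

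\emph{Best/worst case at every iteration.} If $\bw_1$ attains the best (respectively worst) case value of $F_k$ over full-grade right-hand sides for each $k$, then $\bw_1$ is an interior extremum of each smooth $F_k$ on the open component of the sphere cut out by the eigenvector hyperplanes. Therefore every tangential gradient vanishes and $\bF'_{\bw}(\bw_1)=0$. It actually suffices that a single $k$ be extremal, since one zero row already gives singularity. We should check that the extremum is interior, i.e.\ that it is not approached only on the excluded hyperplanes. If it were, we would fall into the early-convergence case instead.

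\emph{Early convergence.} If \gmres terminates at step $m<n$, then $\br_m=\bnull$ and $F_j\equiv0$ for $j\ge m$ at $\bw_1$. Since $F_j\ge0$ everywhere, these are global minima, so their gradients vanish. Equivalently, the formula $\br_j^T\Psi_j(A)$ gives zero rows directly. This yields at least $n-m$ zero rows, hence singularity.

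\emph{Main obstacle.} The hard part is justifying smoothness of $F_k$ at these degenerate points. In the stagnation and early-termination cases the Gram determinants in \eqref{eqn:gmres-gram-determinant-subspace-angle} may degenerate, because $AK_k$ loses rank. The extremum argument needs $F_k$ differentiable at $\bw_1$, or else it needs us to interpret ``singular'' as the one-sided limit of the Jacobian. We would first handle this through the squared norms: $\norm[auto]{\br_k}^2$ is a ratio of polynomials in $\bb$ with nonvanishing denominator whenever $AK_k$ has full column rank, and that holds for $k\le m$. We would therefore run the argument with $\norm[auto]{\br_k}^2$, whose Jacobian differs from that of $\bF_{\bw}$ by the factor $\mathrm{diag}(2\norm[auto]{\br_k})$. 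For the indices $k>m$ this factor vanishes, so singularity transfers directly.
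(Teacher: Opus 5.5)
Your proposal is correct. For early convergence and for best/worst-case convergence you argue as the paper does: a zero residual, or an extremal value of some $F_k$ attained at $\bw_1$, makes the corresponding gradient vanish and so gives a zero row of $\bF'_{\bw}(\bw_1)$.

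The genuine difference is the stagnation case.
\begin{itemize}
\item The paper reads off equality of two adjacent rows from the explicit $\bb$-gradient $\br_k^T\Psi_k(A)$. It uses $\br_k=\br_{k-1}$ and, implicitly, $\Psi_k=\Psi_{k-1}$, which holds because the residual polynomial is unique for full-grade $\bb$.
\item You instead note that $F_{k-1}-F_k\geq 0$ on the whole sphere, with equality at $\bw_1$, so this difference has a critical point there.
\end{itemize}

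What your route buys:
\begin{itemize}
\item It needs only monotonicity and differentiability at $\bw_1$, not the gradient formula. This matters because the paper's derivation of that formula silently holds $\Psi_k$ fixed while differentiating in $\bb$, and it drops the factor $1/\|\br_k\|$, as you noticed.
\item It covers $k=1$ uniformly, giving a zero row.
\item It exposes the common mechanism behind all three cases. In each, $\bF(\bw_1)$ sits on the boundary of a set containing the image of a neighbourhood of $\bw_1$, and that is incompatible with an invertible derivative.
\end{itemize}
The paper's route, in return, names the coinciding rows explicitly, using a formula it relies on elsewhere.

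Your passage to squared norms for early termination agrees with the paper's literal definition of $F_k$ as a Gram-determinant ratio. It is also what actually justifies the paper's own early-termination step ($\br_{n-1}=\bnull$ giving a vanishing row), because $\|\br_k\|$ itself is generally not differentiable where it vanishes.

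One correction to your last paragraph: stagnation at a full-grade $\bw_1$ does not make $AK_k$ rank deficient. For nonsingular $A$, every Gram matrix stays nonsingular, so smoothness is an issue only for early termination. There, for $k>m$, you get differentiability with zero gradient from
\begin{math}
0\leq\|\br_k\|^2\leq\|\br_m\|^2=O(\|\bb-\bw_1\|^2),
\end{math}
since $\|\br_m\|^2$ is smooth near $\bw_1$ and minimized at $\bw_1$.
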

\begin{proof}
	These are proven directly.  

	\emph{For early termination}: Similarly, we consider \Wlog the case that \gmres
	terminates one iteration early, at iteration $n-2$.  Since
	$\br_{n-1}=\bnull$, \eqref{eqn:trace-component-2} implies that last
	column of $F'_{\bw}(\bw_1)$ is a zero column, meaning the Jacobian is
	singular. 

	\emph{For best/worst-case convergence}: \Wlog we discuss in terms of worst-case 
	convergence.  If the theoretical worst-case convergence for a given $A$ were attainable 
	for some $\bw_1^\star$, it would be a critical point for each $k$ of $\Dparam{\bw_1}\norm[auto]{\br_k}$. 
	This produces a zero row in $\bF'_\bw$, meaning it is singular.  Note: the best/worst case 
	convergence patterns may not be attainable for a given $A$.  This simply observes that if 
	it is attainable, the Jacobian $J_\bw$ at that point is singular.

	\emph{For stagnation}: We observe that stagnation can occur in the case
	that $0\in W(A)$. \Wlog we consider a single stagnating iteration.  In
	that case, one observes from \eqref{eqn:trace-component-2} that two
	adjacent columns of $F'_{\bw}(\bw_1)$ are equal.  Thus the Jacobian is
	singular. Observe that total stagnation (\ie at every iteration until
	the last) is actually a special, attainable case of worst-case
	convergence.
%
\end{proof}
\begin{remark}\label{remark:possible-saddle-points-etc}
	We note that these are sufficient conditions for a singular $\bF'_\bw$,
	but they are not necessary. It is possible that there could be saddle
	points, cusps, or other such phenomena.  The full nature of this
	landscape remains an open question, discussed further in
	\Cref{section:conclusions}.
\end{remark}
The theory we have built up does not allow us to fully answer the question, but we can answer it locally. 
\begin{theorem}
	Let $A=\tritoep{a}{\tau\xi}{\tau/\xi}$ be fixed, and 
	let $\bw_1$ be full grade such that $\bF'_\bw(\bw_1)$ is non-singular.  Then there exist neighborhoods 
	$U$ of $\bw_1$ in $\S_{n-1}$ and $V$ of the convergence curve $\bF(a, \tau, \xi, \bw_1)$ wherein 
	$\bF(a, \tau, \xi, \cdot):U\rightarrow V$ is invertible and bijective; \ie every convergence curve in $V$ is attainable.
\end{theorem}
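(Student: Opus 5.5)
The plan is to invoke the inverse function theorem for the map $\bw_1\mapsto \bF(a,\tau,\xi,\bw_1)$, with $A$ held fixed. Everything else is bookkeeping that checks its hypotheses are satisfied.

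First, the domain. Following \Cref{remark:real-analysis-of-F} we work in the real setting. Locally we identify $\S^{n-1}$ near $\bw_1$ with an open set of $\R^{n-1}$, using a smooth chart such as the projection onto the tangent hyperplane $\bw_1^\perp$ or stereographic coordinates. The residual curve is invariant under scaling of $\bb$. Hence, by the remark preceding the differentiation with respect to $\bb$, derivatives with respect to $\bb$ restricted to tangent directions agree with derivatives with respect to $\bw_1$ up to a constant factor. This makes $\bF'_{\bw}(\bw_1)$ a well-defined square $(n-1)\times(n-1)$ matrix in these coordinates.

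Second, smoothness. Each $F_k$ is a quotient of Gram determinants, as in \eqref{eqn:gmres-gram-determinant-subspace-angle}. Their entries, collected in $P_k$ and $Q_k$ from \eqref{eqn:Pk-and-Qk-definition}, are quadratic polynomials in $\bb$. The denominator $\det Q_k$ is nonzero because $\bw_1$ is full grade. Indeed, full grade makes $AK_k$ of full column rank for every $k\le n-1$, and this nonvanishing persists on a neighborhood by continuity. So $\bF(a,\tau,\xi,\cdot)$ is real-analytic, in particular $C^1$, near $\bw_1$. Passing from $F_k$ to $\norm[auto]{\br_k}$ via a square root is harmless, since $\norm[auto]{\br_k}>0$ for $k\le n-1$ under full grade, so it preserves smoothness.

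Third, apply the inverse function theorem. Since $\bF'_{\bw}(\bw_1)$ is nonsingular by hypothesis, there are open neighborhoods $U\ni\bw_1$ in $\S^{n-1}$ and $V\ni\bF(a,\tau,\xi,\bw_1)$ in $\R^{n-1}$ such that $\bF(a,\tau,\xi,\cdot)\colon U\to V$ is a $C^1$ diffeomorphism, in particular a bijection. We shrink $U$ so that it lies in a single component of the sphere minus the eigenvector hyperplanes. Then every point of $U$ remains full grade, so every curve in $V$ is produced by a genuine full-run \gmres iteration with right-hand side in $U$, which is the attainability claim.

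The step I expect to require the most care is the first: the chart on the sphere and its compatibility with $\bF'_\bw$. One must check that the gradients $\br_k^T\Psi_k(A)$ restricted to the tangent space $\bw_1^\perp$ are exactly the columns of the square Jacobian named in the statement. There is a subtlety in that the radial direction must be removed; this works because $\bF$ is constant along rays, as noted in \Cref{remark:r11-always-free}. Once that identification is fixed, the rest is a direct application of the inverse function theorem.
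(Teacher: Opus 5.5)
Your proposal is correct and takes the same route as the paper: the paper's entire proof is a direct application of the Inverse Function Theorem to $\bw_1\mapsto\bF(a,\tau,\xi,\bw_1)$ on the unit sphere. Your extra bookkeeping makes explicit hypotheses the paper leaves implicit, namely charts on the sphere, smoothness of the Gram-determinant ratios with nonvanishing denominators under full grade, and openness of the full-grade condition so that $U$ can be shrunk (replacing $V$ by the image of the smaller $U$).
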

\begin{proof}
	This is a direct application of the Inverse Function Theorem. 
\end{proof}

\section{Demonstrations of the theory}
\label{section:numerical-demonstrations}

It is difficult to construct large-scale examples demonstrating these results; so we use examples for dimension $n=4$, since we 
established in \Cref{section:gmres-toeplitz-parameterization} that this is the smallest dimension that is fully general 
in terms of the APS-parameterization structure.

\subsection{
	Convergence profile for 
	\begin{math}
		\bb 
		=
		\frac{1}{2}
		\begin{bmatrix}
			1 & 1 & 1 & 1
		\end{bmatrix}^T
	\end{math}
}

\begin{figure}[h]
	\begin{center}
		\begin{tikzpicture}
			\begin{axis}[width=8cm, height=6cm, ymode=log, xlabel={iteration $k$},
			    ylabel={$\|\br_k\|/\|\br_0\|$}, xtick={0,1,2,3}, xmin=0, xmax=3]
			  \addplot[black, thick, mark=none] coordinates {(0,1.0000) (1,0.2562) (2,0.2494) (3,0.2483)};
			  \addplot[black, only marks, mark=*, mark size=2.2pt] coordinates {(0,1.0000)};
			  \addplot[red, only marks, mark=triangle*, mark size=3pt] coordinates {(1,0.2562) (2,0.2494) (3,0.2483)};
			\end{axis}
		\end{tikzpicture}
	\end{center}
	\caption{
		Relative residual norms for \gmres applied for
		$A=\tritoep{0.8}{1.6}{0.625}\in\R^{4\times 4}$ (\ie $a=0.8$,
		$\tau = 1$, $\xi=1.6$) for right-hand side 
		\begin{math}
			\bb 
			=
			\frac{1}{2}
			\begin{bmatrix}
				1 & 1 & 1 & 1
			\end{bmatrix}^T
			.
		\end{math}
		We seek other right-hand sides $\btilb\in\R^4$ producing exactly this same convergence curve.
		\label{figure:gmres-curve-n4-bones}
	}
\end{figure}

\begin{figure}
	\begin{center}
		\includegraphics[width=0.65\textwidth]{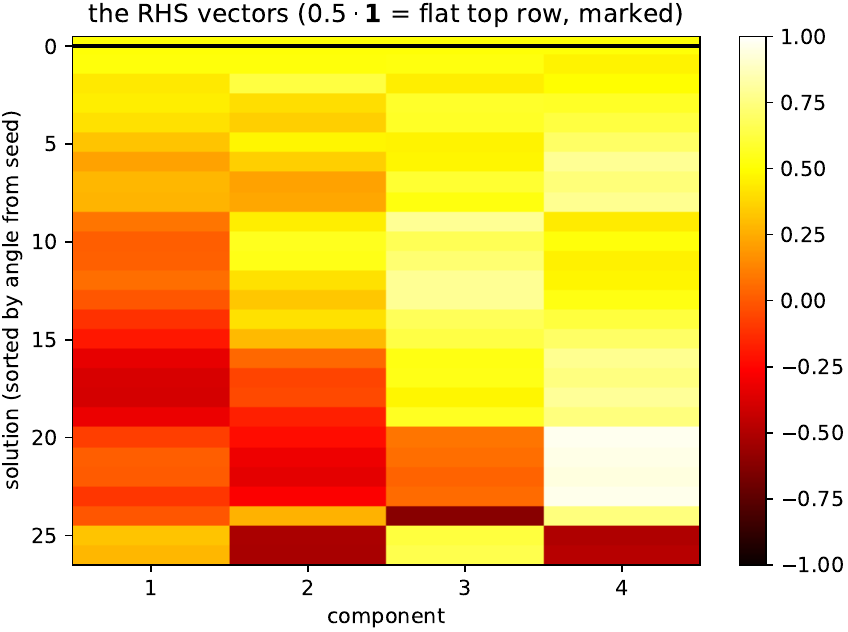}
	\end{center}
	\caption{
		A colormap showing the value of different right-hand sides for
		which \gmres exhibits same convergence as for
		$\bb=\frac{1}{2}\bone$.
		\label{figure:rhss-same-gmres-curve}
	}
\end{figure}

We demonstrate how using the Jacobian $\bF'_\bw$, we use a Newton iteration
to find right-hand sides for which \gmres produces a prescribed convergence
curve. Let $A=\tritoep{0.8}{1.6}{0.625}~\in~\R^4$; \ie 
$a=0.8$, $\tau = 1$, and $\xi=1.6$.  Other right-hand sides are obtained by 
choosing random starting vectors and running a damped Newton iteration using $\bF'_\bw$,
normalizing to remain on the unit sphere after each step, an example of Newton's Method 
on the unit sphere with retraction back to the sphere after every step \cite[Chapter 6]{AbsilMahonySepulchre:2008:1}.
\footnote{
	We use this method out of the box, and it was effective at finding new
	right-hand sides with the prescribed convergence.  It was out of the
	scope of this work to investigate its convergence properties in the 
	context of \gmres convergence specification.
}
We performed this Newton iteration for 400 random starting points. We do not 
claim that this exhausts all possible right-hand sides generating the same curve,  
as our goal is to just illustrate the theory.

To illustrate, we match the convergence of the seed system generating the
wanted \gmres curve,
\begin{align*}
	\bb 
	=& 
	\begin{bmatrix} 
		0.5 
		& 
		0.5 
		& 
		0.5
		& 
		0.5 
	\end{bmatrix}^T
	\quad
	\bw_1 
	= 
	\begin{bmatrix}
		 0.2790 
		 & 
		 0.5922 
		 & 
		 0.5922 
		 & 
		 0.4698 
	\end{bmatrix}^T
	\\
	W 
	= &
	\begin{bmatrix}
		 0.2790 & -0.3551 & -0.2213 & 0.8644 
		 \\ 
		 0.5922 & -0.6648 & 0.0424 & -0.4534 
		 \\ 
		 0.5922 & 0.5630 & -0.5668 & -0.1050 
		 \\ 
		 0.4698 & 0.3391 & 0.7924 & 0.1906 
	\end{bmatrix}
	R 
	= 
	\begin{bmatrix}
		 2.5540 & 6.5562 & 16.5426 & 40.8939 
		 \\ 
		 0 & 0.8834 & 3.0781 & 9.0778 
		 \\ 
		 0 & 0 & 1.1303 & 3.6288 
		 \\ 
		 0 & 0 & 0 & 0.1267 
	\end{bmatrix}
	,
\end{align*}
which produces residual norms (rounded to four decimal places) $\braces{1, 0.2562, 0.2494, 0.2483, 0}$.
Running damped Newton iterations with random starting vectors produced 27 distinct right-hand sides exhibiting 
the same \gmres convergence curve. In \Cref{figure:cosines-prescribed-rhss}, we display for each generated right-hand side the 
cosine of its angle with $\bb$ as well as a nearest-neighbor angle to quantify isolation.

\begin{figure}
	\begin{center}

		\begin{tikzpicture}
			\begin{axis}[width=9cm, height=4.5cm, xlabel={generated RHS index}, ylabel={$\abs[auto]{\cos\prn{\bb,\bb^{(i)}}}$},
		    xtick={0,5,10,15,20,25}, xmin=0, xmax=26]
		  \addplot[black, dashed, mark=*, mark size=1.6pt] 
		 coordinates {
		       (0,1.0000)
			(1,0.9988)
			(2,0.9888)
			(3,0.9875)
			(4,0.9744)
			(5,0.9630)
			(6,0.9067)
			(7,0.9047)
			(8,0.8974)
			(9,0.8698)
			(10,0.8667)
			(11,0.8614)
			(12,0.8564)
			(13,0.8146)
			(14,0.7825)
			(15,0.7065)
			(16,0.5026)
			(17,0.4147)
			(18,0.4043)
			(19,0.4018)
			(20,0.3695)
			(21,0.3553)
			(22,0.3102)
			(23,0.3102)
			(24,0.1813)
			(25,0.0492)
			(26,0.0438)
	       };
		\end{axis}
		\end{tikzpicture}
		
		\begin{tikzpicture}
			\begin{axis}[width=9cm, height=4.5cm, xlabel={generated RHS index}, ylabel={$\abs[auto]{\cos\prn{\bb^{(i)},\bb^{(NN)}}}$},
		    xtick={0,5,10,15,20,25}, xmin=0, xmax=26]
		  \addplot[black, dashed, mark=*, mark size=1.6pt] 
		  coordinates {
		  	(0,0.9988)
			(1,0.9988)
			(2,0.9888)
			(3,0.9967)
			(4,0.9967)
			(5,0.9844)
			(6,0.9895)
			(7,0.9964)
			(8,0.9964)
			(9,0.9987)
			(10,0.9964)
			(11,0.9964)
			(12,0.9987)
			(13,0.9925)
			(14,0.9867)
			(15,0.9867)
			(16,0.9922)
			(17,0.9963)
			(18,0.9963)
			(19,0.9914)
			(20,0.9979)
			(21,0.9988)
			(22,0.9988)
			(23,0.9979)
			(24,0.8956)
			(25,0.9985)
			(26,0.9985)
		};
		\end{axis}
		\end{tikzpicture}
		
	\end{center}
	\caption{
		 In the top plot, we show for each generated right-hand side
		 $\bb^{(i)}$ the absolute cosine of the angle between the vector and $\bb$, 
		noting that the angle bears seemingly little relation to which vectors produce 
		the same convergence curve.  The bottom plot shows the absolute cosine of each 
		$\bb^{(i)}$ with its nearest neighbor.  Though they are isolated, they 
		are not too far from one another on $\S^3$.
		\label{figure:cosines-prescribed-rhss}
	}
\end{figure}

For two of the generated right-hand sides (corresponding to rows 13 and 26 in
\Cref{figure:rhss-same-gmres-curve}), we show the vector as well as the
associated $W$ and $R$ from the APS-parameterization of the  
system generating same convergence curve. 

From \Cref{figure:cosines-prescribed-rhss},
we observe that $\abs[auto]{\cos\prn{\bb,\bb^{(13)}}}=0.8146$, 
and we present the right-hand side, and quantities from the APS-parameterization,
\begin{align*}
	\bb^{(13)}
	=&
	\begin{bmatrix}
		 -0.0079 
		&
		 0.3236 
		&
		 0.7844 
		&
		 0.5291 
	\end{bmatrix}^T
	\quad
	\bw_1^{(13)}
	=
	\begin{bmatrix}
		 0.0830 
		&
		 0.3119 
		&
		 0.6251 
		&
		 0.7107 
	\end{bmatrix}^T
	\\
	W^{(13)}
	=&
	\begin{bmatrix}
		 0.0830 & 0.6576 & -0.5948 & -0.4550 
		\\
		 0.3119 & 0.5642 & 0.7557 & -0.1156 
		\\
		 0.6251 & 0.1665 & -0.2733 & 0.7119 
		\\
		 0.7107 & -0.4707 & -0.0218 & -0.5223 
	\end{bmatrix}
	\quad
	R^{(13)}
	=
	\begin{bmatrix}
		 2.3613 & 5.3834 & 12.5215 & 29.5418 
		\\
		 0 & 0.2591 & 1.0207 & 3.0787 
		\\
		 0 & 0 & 0.1274 & 0.4197 
		\\
		 0 & 0 & 0 & 0.1267 
	\end{bmatrix}
	.
\end{align*}
Similarly,
we observe that $\abs[auto]{\cos\prn{\bb,\bb^{(26)}}}=0.0438$, 
and we present the right-hand side, and quantities from the APS-parameterization,
\begin{align*}
	\bb^{(26)}
	=&
	\begin{bmatrix}
		 0.2752 
		&
		 -0.5273 
		&
		 0.6447 
		&
		 -0.4803 
	\end{bmatrix}^T
	\quad
	\bw_1^{(26)}
	=
	\begin{bmatrix}
		 -0.1092 
		&
		 0.4208 
		&
		 -0.6271 
		&
		 0.6463 
	\end{bmatrix}^T
	\\
	W^{(26)}
	=&
	\begin{bmatrix}
		 -0.1092 & 0.5592 & -0.6609 & 0.4884 
		\\
		 0.4208 & 0.6448 & 0.0045 & -0.6381 
		\\
		 -0.6271 & 0.4969 & 0.5926 & 0.0928 
		\\
		 0.6463 & 0.1569 & 0.4604 & 0.5880 
	\end{bmatrix}
	\quad
	R^{(26)}
	=
	\begin{bmatrix}
		 1.0015 & -0.7924 & 0.6699 & -0.4402 
		\\
		 0 & 0.1597 & 0.2710 & 0.7631 
		\\
		 0 & 0 & 0.1237 & 0.3838 
		\\
		 0 & 0 & 0 & 0.1267 
	\end{bmatrix}
	.
\end{align*}
We display these two right-hand sides as examples because they are, respectively, closer and further from $\bb$ on 
the unit sphere. 

\section{Conclusions and future work}
\label{section:conclusions}
	Our goal in this work was to understand how the general results of
	\cite{GreenbaumPtakStrakos:1996:1} and its manifestation as the
	APS-parameterization \cite{ArioliPtakStrakos:1998:1} could be leveraged
	to gain better understanding of \gmres performance for matrix
	structures appearing in application problems.  We have explored this
	for the non-symmetric tridiagonal Toeplitz matrix, the kind which
	appear in discretizations of one-dimensions \pdes.  We have shown
	precisely how the tridiagonal Toeplitz structure constrains the
	residual Arnoldi vectors and the APS-parameterization.  Furthermore,
	we have gone further by exploring the structure of the Jacobian of $\bF$ that
	maps the matrix and right-hand side to residual curves.  From this, we
	have been able to describe how neighborhoods of residual curves are
	attainable.  A small demonstration shows how this theory can be used to
	find right-hand sides yielding a prescribe \gmres convergence. 

	This demonstration hints at a much richer structure.  We kept the focus
	narrow, but it is clear that there is much further work that can be
	done by studying the underlying manifold structures.  In addition, the approach
	we have developed for the tridiagonal Toeplitz case can serve as a
	prototype for more complicated matrix structures, the kind which arise
	from \eg the discretization of more complicated \pdes.  This also includes 
	folding a deeper consideration of preconditioning into this theory, a 
	direction already pursued in \cite{MatalonSpillane:2025:1}.

	Furthermore, we have imposed no structure on the right-hand side.  The
	disconnection of the full-grade right-hand side space (\ie $\S^{n-1}$)
	leads us to ask what the theory tells us if we restrict our
	consideration to structured right-hand sides that would actually arise
	from discretization. 

	A more theoretical direction would be to more fully develop the tracing
	of level curve of $\bF$, \ie explore using $\bF'$ to construct families
	of pairs $(A,\bb)$ having the same convergence curve. For a specific,
	fixed convergence curve, the family of pairs $\prn{A,\bb}$ for which
	\gmres produce that convergence curve us nothing more than a level set
	of $\bF$.  Starting from one such pair, once can use $\bF'$ to move
	along the level \dquotes{curve} to obtain other tridiagonal
	matrix/right-hand side pairs producing the same convergence.

	\section*{Acknowledgments} The authors wish to thank Jen Pestana, PI
	for the EPSRC side of our joint project, for many insightful comments
	and suggestions. They also wish to thank Gerard Meurant for suggesting
	a deeper look at the APS parameterization based on some of their
	initial work. The first author thanks Jörg Liesen for providing a copy
	of \cite{LiesenMehrmann:2025:1}.  The first author also thanks Mark
	Embree for insightful and critical questions that led us to hone our
	results further. 

\appendix 
\section{The Frobenius normal form}\label{section:frobenius-normal-form}

We observe for the reader that this construction employed in, \eg,
\cite{ArioliPtakStrakos:1998:1,GreenbaumPtakStrakos:1996:1,GreenbaumStrakos:1994:1,
Meurant:2012:1,MeurantDuintjerTebbens:2014:1,Schweitzer:2016:1,MeurantDuintjerTebbens:2020:1},
can be understood as being accomplished by careful specification of a matrix
via its Frobenius (\aka rational) canonical form.  This canonical form
represents $A$ as begin similar to block diagonal matrix
\begin{align*}
	A=S^{-1}\bldiag\braces{C_{p_1},C_{p_2},\ldots,C_{p_k}}S
\end{align*}
wherein each diagonal block $C_{p_i}\in\C^{\deg p_i\times \deg p_i}$ is a
companion matrix.  

The construction is generated by running a Krylov power iteration for a
starting vector until an invariant subspace is reached. Thereafter, a new
starting vector in the orthogonal complement of the generated space is used to
continue the iteration, and this is repeated until $\C^n$ has been spanned. The
canonical form can be assembled by taking $S$ to have as its columns the
generated Krylov power bases, and the block diagonal of companion matrices
arises from \eqref{eqn:companion-matrix-relation} applied for each Krylov
basis. A nice treatment of this canonical form can be found in \cite[Chapter
16.3]{LiesenMehrmann:2025:1}.

One interpretation of our goal in this work is to understand the constraints on
the Frobenius canonical form of a tridiagonal Toeplitz matrix.  However, we do
not approach the analysis with this directly in mind. 

\section{Proof of \Cref{lemma.resid-norm-deriv-a-tau}}\label{section:resid-norm-deriv-a-tau-PROOF}
We present the postponed proof of \Cref{lemma.resid-norm-deriv-a-tau}.
\begin{proof}
	We observe that for $\theta\in\braces{a,\tau, \xi}$, the entries of $\frac{\partial}{\partial\theta}P_k$ 
	and $\frac{\partial}{\partial\theta}Q_k$ all have the form 
	\begin{align}
		\frac{\partial}{\partial\theta} 
		\prn{A^j\bb}^TA^\ell\bb 
		= 
		\prn{\frac{\partial}{\partial\theta}A^j\bb}^T\prn{A^\ell\bb}
		+
		\prn{A^j\bb}^T\prn{\frac{\partial}{\partial\theta}A^\ell\bb}
		,
		\label{eqn:matrix-entry-deriv-gen}
	\end{align}
	since $\bb$ is independent of the parameters determining $A$. For $a$, it follows that  
	\begin{align}
		\Dparam{a} 
		\prn{A^j\bb}^TA^\ell\bb 
		= 
		\prn{j A^{j-1}\bb}^T\prn{A^\ell\bb}
		+
		\prn{A^j\bb}^T\prn{\ell A^{\ell-1}\bb}
		.
		\label{eqn:matrix-entry-deriv-a}
	\end{align}
	From the definition of the entries of $P_k$, we thus have the entry-wise derivative recurrence \wrt $a$
	\begin{align*} 
		\Dparam{a}
		\prn{P_k}_{j\ell} 
		=
		j\prn{P_k}_{j-1,\ell} 
		+
		\ell\prn{P_k}_{j,\ell-1}
		.
	\end{align*}
	We define the \emph{index-scaled upward shift matrix} 
	\begin{align*}
		D_k 
		= 
		\begin{bmatrix}
			0 & 1 &&&& 
			\\ 
			  & 0 & 2 && &
			\\
			  && 0 & \ddots && 
			\\ 
			  &&& \ddots & k-1 &
			\\
			  &&&& 0 & k
		\end{bmatrix}
		,
	\end{align*}
	which allows us to express the derivative recursion at the matrix level as 
	\begin{align}
		\Dparam{a} P_k 
		= 
		D_k P_k + P_k D_k^T 
		. 
		\label{eqn:Pk-deriv-a}
	\end{align}
	For the entries of $Q_k$, the result is similar, but with a catch.  Observe that for the cases of either 
	$j=1$ or $\ell=1$, \eqref{eqn:matrix-entry-deriv-a} contains terms of the form $\bb^TA^\ell\bb$ or $\prn{A^j\bb}^T \bb$; 
	and in both cases the entry recursion produces quantities that are not entries of $Q_k$ (which does not arise for $P_k$).
	Thus, we must insert these quantities back into the derivative recursion as a rank-$2$ update, \ie 
	\begin{align}
		\Dparam{a} Q_k 
		= 
		D_kQ_k + Q_kD_k^T + \be_1\bp_k^T + \bp_k\be_1^T 
		\label{eqn:Qk-deriv-a}
		,
	\end{align}
	where $\bp_k = \prn{AK_k}^T\bb$.
	Due to $\trace\prn{D_k}=0$, and the linearity and cyclic invariance of the $\trace$, it follows that 
	\begin{align*}
		\trace\prn{P_k^{-1}\Dparam{a}P_k} 
		=& 
		\trace\prn{P_k^{-1}D_kP_k + D_k^T}
		= 
		0
		, 
		\quad 
		\mbox{and}
		\\
		\trace\prn{Q_k^{-1}\Dparam{a}Q_k} 
		=& 
		\trace\prn{Q_k^{-1}D_kQ_k + D_k^T + Q_k^{-1}\prn{\be_1\bp_k^T + \bp_k\be_1^T}}
		\\
		= &
		\trace\prn{Q_k^{-1}\prn{\be_1\bp_k^T + \bp_k\be_1^T}} = 2\be_1^T Q_k^{-1}\bp_k
	\end{align*}

	Similarly, for differentiation \wrt $\tau$, it follows from \eqref{eqn:matrix-entry-deriv-gen} and \eqref{eqn:Al-deriv-tau} that 
	\begin{align}
		\frac{\partial}{\partial\tau}
		\prn{A^j\bb}^TA^\ell\bb 
		=&
		\frac{j}{\tau} 
		\prn{
			\prn{A^j 
			- 
			a A^{j-1}}\bb
		}^T 
		A^\ell\bb 
		+ 
		\frac{\ell}{\tau}
		\prn{A^j\bb}^T
		\prn{
			\prn{A^\ell 
			- 
			a A^{\ell-1}}\bb
		}
		\nonumber
		\\
		=&
		\frac{j}{\tau}
		\brackets{
			\prn{A^j\bb}^TA^\ell\bb 
			- 
			a\prn{A^{j-1}\bb}^T A^\ell\bb
		}
		+ 
		\frac{\ell}{\tau}
		\brackets{
			\prn{A^j\bb}^T A^\ell\bb 
			- 
			a \prn{A^j\bb}^T A^{\ell-1}\bb
		}
		\label{eqn:matrix-entry-deriv-tau}
		. 
	\end{align}
	Observe that the second term being subtracted in each bracket mirrors a scaling of the term obtained 
	when differentiating \wrt $a$.  Let us define the \emph{index-scaled diagonal matrix} $N_k = \diag{0,1,\ldots, k}$.  
	It follows from \eqref{eqn:matrix-entry-deriv-tau} that 
	\begin{align}
		\Dparam{\tau} P 
		= &
		N_k P_k + P_k N_k - a \Dparam{a}P_k
		,
		\quad 
		\mbox{and}
		\quad
		\Dparam{\tau} Q_k 
		= 
		N_k Q_k + Q_k N_k - a \Dparam{a}Q_k 
		\label{Pk-Qk-deriv-tau}
		.
	\end{align}
	Computing traces and again exploiting properties of $\trace$ as well as \eqref{eqn:Pk-deriv-a} and \eqref{eqn:Qk-deriv-a} yields 
	\begin{align*}
		\trace\prn{
			P_k^{-1}\prn{N_k P_k+P_k N+k -a\Dparam{a}P_k}
		}
		=& 
		2\trace\prn{N_k} 
		\\
		\trace\prn{
			Q_k^{-1}\prn{N_k Q_k+Q_k N_k -a\Dparam{a}Q_k}
		}
		=&
		2\trace\prn{N_k} 
		- 
		2a\be_1^T Q_k^{-1}\bp_k
		.
	\end{align*}
	The proof is completed by inserting these results into
	\eqref{eqn:log-based-residual-norm-deriv} for $a$ and $\tau$,
	simplifying, and observing that from the definitions of $Q_k$ and
	$\bp_k$, $Q_k^{-1}\bp_k$ is indeed the solution of
	\eqref{eqn:gmres-small-ls}. 
\end{proof}

\section{Proof of \Cref{lemma:resid-norm-deriv-xi}}\label{section:resid-norm-deriv-xi-PROOF}
We present the postponed proof of \Cref{lemma:resid-norm-deriv-xi}.
\begin{proof}
	The structure of this proof is similar to that of \Cref{lemma.resid-norm-deriv-a-tau}.  To differentiate 
	$P_k$ and $Q_k$ \wrt to $\xi$, we study the derivative of their entries; \ie 
	\begin{align}
		\Dparam{\xi}
		\prn{A^j \bb}^T 
		A^\ell \bb 
		=& 
		\prn{\Dparam{\xi}A^j \bb}^T 
		\prn{A^\ell \bb}
		+
		\prn{A^j \bb}^T 
		\prn{\Dparam{\xi}A^\ell \bb}
		\nonumber
		\\
		=&
		\frac{1}{\xi}
		\brackets{
			\prn{\brackets{N,A^j} \bb}^T 
			\prn{A^\ell \bb}
			+
			\prn{A^j \bb}^T 
			\prn{\brackets{N, A^\ell} \bb}
		}
		\nonumber
		\\
		=&
		\frac{1}{\xi}
		\brackets{
			\bb^T\prn{NA^j}^T A^\ell\bb 
			+ 
			\bb^T\prn{A^j}N A^\ell\bb 
			- 
			\bb^T\prn{A^j N}^TA^\ell\bb 
			- 
			\bb^T\prn{A^j}^T A^\ell N \bb
		}
		\label{eqn:matrix-entry-deriv-xi}
		.
	\end{align}
	We denote
	\begin{math}
		\wtK_{i}
		\coloneq
		\begin{bmatrix}
			N\bb 
			& 
			AN\bb 
			& 
			\cdots 
			& 
			A^iN\bb
		\end{bmatrix}
	\end{math}
	as the Krylov basis of the \emph{index scaled} right-hand side, \ie $N\bb$. Then it 
	follows from \eqref{eqn:matrix-entry-deriv-xi} that we can compactly represent the matrix
	derivatives \wrt $\xi$ as 
	\begin{align*}
		\Dparam{\xi} 
		P 
		=&
		2K_{k+1}^T 
		N K_{k+1} 
		- 
		\wtK_{k+1}^T 
		K_{k+1} 
		- 
		K_{k+1}^T 
		\wtK_{k+1}
		\\
		\Dparam{\xi} 
		Q 
		=&
		2\prn{AK_k}^T 
		N A K_k 
		- 
		\prn{A\wtK_k}^T 
		\prn{AK_k} 
		- 
		\prn{AK_k}^T 
		\prn{A\wtK_k}
		.
	\end{align*}
	These quantities need to be inserted into the trace formulas from \eqref{eqn:log-based-residual-norm-deriv}; so we 
	analyze structure of these terms individually.  

	We observe that we can use trace linearity and cyclic invariance along with the pseudoinverse formula 
	\begin{math}
		K_{k+1}^\dagger 
		= 
		\prn{K_{k+1}^TK_{k+1}}^{-1}K_{k+1}^{T}
	\end{math}
	to simplify 
	\begin{align*}
		\trace\prn{P_k^{-1}\Dparam{\xi}P_k} 
		=& 
		\trace\prn{
			\prn{K_{k+1}^T K_{k+1}}^{-1} 
			\prn{
				2K_{k+1}^T N K_{k+1} 
				- 
				\wtK_{k+1}^T K_{k+1} 
				- 
				K_{k+1}^T\wtK_{k+1}
			}
		}
		\\ 
		=& 
		2\trace\prn{\Pi_{k+1} N} 
		- 
		2\trace\prn{\wtK_{k+1}K_{k+1}^\dagger}
		,
	\end{align*}
	where $\Pi_{k+1} \coloneq K_{k+1}\prn{K_{k+1}^T K_{k+1}}^{-1}K_{k+1}^T$ is the
	orthogonal projector onto $\cK_{k+1}(A,\bb)$.
	A similar computation with the other trace term yields 
	\begin{align*}
		\trace\prn{Q_k^{-1}\Dparam{\xi}Q_k} 
		=& 
		2\trace\prn{\Phi_k N}
		- 
		2\trace\prn{A\wtK_k\prn{AK_k}^T}
		,
	\end{align*}
	where $\Phi_k\coloneq AK_k\brackets{\prn{AK_k}^TAK_k}\prn{AK_k}^T$ is the orthogonal 
	projector onto $A\cK_k\prn{A,\bb}$.
	Inserting both trace formulas back into \eqref{eqn:log-based-residual-norm-deriv} for the derivative 
	\wrt $\xi$ yields, 
	\begin{align}
		\Dparam{\xi}
		\norm[auto]{\br_k} 
		= 
		\frac{\norm[auto]{\br_k}}{\xi}
		\brackets{
			\trace\prn{\Pi_{k+1}N}
			- 
			\trace\prn{\Phi_k N}
			+ 
			\trace\prn{\wtK_{k+1}K_{k+1}^\dagger}
			- 
			\trace\prn{A\wtK_k\prn{AK_k}^\dagger}
		}
		\label{eqn:log-based-residual-norm-deriv-xi}
	\end{align}
	We observe that by construction $A\cK_k(A,\bb)\subset \cK_{k+1}(A,\bb)$; and thus $\Pi_{k+1}-\Phi_k$ is a 
	projector onto the one-dimensional subspace of $\cK_{k+1}(A,\bb)$ that is not in $A\cK_k(A,\bb)$.  This is 
	precisely the \gmres residual $\br_k$; \ie $\Pi_{k+1}-\Phi_k = \br_k\br_k^T/\norm[auto]{\br_k}^2$. 
	It follows from this and from cyclic invariance $\trace$ that 
	\begin{align}
		\trace\prn{\Pi_{k+1}N}
		- 
		\trace\prn{\Phi_k N}
		=
		\trace\prn{\prn{\Pi_{k+1}-\Phi_k}N} 
		= 
		\frac{\br_k^T N\br_k}{\norm[auto]{\br_k}^2}
		\label{eqn:trace-component-1}
		.
	\end{align}

	To understand the structure of $\trace\prn{\wtK_{k+1}K_{k+1}^\dagger}$, we build on the work of \cite{Cline:1964:1,Greville:1960:1} 
	to understand the structure of 
	\begin{math}
		K_{k+1}^\dagger
		=
		\begin{bmatrix}
			\bb & AK_k
		\end{bmatrix}
		.
	\end{math}
	We begin with the ansatz 
	\begin{align*}
		K_{k+1}^\dagger 
		= 
		\begin{bmatrix}
			\bs^T 
			\\ 
			\prn{AK_k}^\dagger + \bu\bt^T
		\end{bmatrix}
	\end{align*}	
	and then solve for $\bs$, $\bt$, and $\bu$, using the fact that in this
	case we satisfy the pseudoinverse property $K_{k+1}^\dagger K_{k+1}=
	I$.  Expanding the product allows us to obtain the equations 
	\begin{align*}
		K_{k+1}^\dagger K_{k+1}
		= 
		\begin{bmatrix}
			\bs^T\bb & \bs^TAK_k 
			\\ 
			\brackets{\prn{AK_k}^\dagger + \bu\bt^T}\bb & \brackets{\prn{AK_k}^\dagger + \bu\bt^T}AK_k
		\end{bmatrix}
		= 
		I
		.
	\end{align*}
	From this, it immediately follows that the choice of
	$\bs=\br_k/\norm[auto]{\br_k}^2$ since from remark
	\Cref{remark:AK-WR-column-relationship} and from the definition of the
	projector $\Phi_k$, it follows that $\bb = \Phi_k\bb + \br_k$.  This yields automatically that $\bs^TAK_k=\bnull$. 
	Since $\prn{AK_k}^TAK_k=I$ must also hold, it follows that $\bb=\alpha\br_k$ for some value of $\alpha$, which 
	we actually do not need to solve for to complete the proof.  To obtain $\bu$, we solve 
	\begin{align*}
		\bnull
		=&
		\brackets{\prn{AK_k}^\dagger + \bu\bt^T}\bb 
		\\ 
		=&
		\underbrace{
			\prn{AK_k}^\dagger\Phi_k\bb 
		}_{
			=\prn{AK_k}^\dagger\bb
			\atop 
			{
				\mbox{\footnotesize pseudoinv. \normalsize}
				\atop 
				\mbox{\footnotesize property \normalsize}
			}
		}
		+
		\underbrace{
			\prn{AK_k}^\dagger\br_k
		}_{
			=0 
			\atop 
			{
				\mbox{\footnotesize pseudoinv. \normalsize}
				\atop 
				\mbox{\footnotesize property \normalsize}
			}
		}
		+
		\underbrace{
			\bu\prn{\alpha \br_k}^T\Phi_k\bb 
		}_{
			=0 
			\atop 
			{
				\br_k \perp \range\prn{AK_k}
			}
		}
		+
		\underbrace{
			\bu\prn{\alpha \br_k}^T \br_k 
		}_{
			=\alpha\norm[auto]{\br_k}^2\bu
		}
		,
	\end{align*}
	and from this we conclude that $\alpha\bu = -\prn{AK_k}^\dagger\bb/\norm[auto]{\br_k}^2$.  
	Applying the cyclic invariance and linearity of the trace yields 
	\begin{align*}
		\trace\prn{\wtK_{k+1}K_{k+1}^\dagger} 
		=& 
		\trace\prn{K_{k+1}^\dagger \wtK_{k+1}}
		\\ 
		=& 
		\br_k^T N\bb/\norm[auto]{\br_k}^2 
		+ 
		\trace\prn{\prn{AK_k}^\dagger A\wtK_k} 
		- 
		\br_k^TA\wtK_k\prn{AK_k}^\dagger\bb/\norm[auto]{\br_k}^2
	\end{align*}
	If follows that 
	\begin{align*}
		\trace\prn{\wtK_{k+1}K_{k+1}^\dagger} 
		- 
		\trace\prn{A\wtK_k\prn{AK_k}^\dagger}
		= 
		\br_k^T\prn{N\bb-\prn{A\wtK_k}\prn{AK_k}^\dagger\bb}/\norm[auto]{\br_k}^2
		.
	\end{align*}

	We lastly make the observation that this relates back to a \gmres iteration and that 
	$\bc^{(k)}\coloneq\prn{AK_k}^\dagger\bb$ are by definition the coefficients of the \gmres iterate $\bx_k$ in the 
	Krylov power basis; \ie $\bx_k = K_k\bc^{(k)} = \sum_{i=0}^{k-1}c_i^{(k)}A^i\bb=\Theta_k(A)\bb$.  Thus, we can simplify 
	\begin{align*}
		\prn{A\wtK_k}\prn{AK_k}^\dagger\bb 
		= 
		\prn{A\wtK_k}\bc^{(k)}
		=& 
		\begin{bmatrix}
			AN\bb 
			& 
			A^2 N\bb 
			& 
			\cdots 
			& 
			A^k N\bb
		\end{bmatrix}
		\bc^{(k)}
		\\ 
		=& 
		\sum_{i=1}^k 
		\prn{c_i^{(k)}}A^iN\bb 
		=
		A\Theta_k(A)N\bb
		.
	\end{align*}
	This allows us to express the whole thing using the \gmres residual polynomial, proving the result. We have 
	\begin{align}
		\br_k^T\prn{N\bb-\prn{A\wtK_k}\prn{AK_k}^\dagger N \bb}
		=& 
		\br_k^T\prn{N\bb-A\Theta_k(A)N\bb}
		\nonumber
		\\ 
		=& 
		\br_k^T\prn{I-A\Theta_k(A)}N\bb
		\nonumber
		\\ 
		=& 
		\br_k^T\Psi_k(A)N\bb
		\label{eqn:trace-component-2}
		.
	\end{align}
	Inserting \eqref{eqn:trace-component-1} and \eqref{eqn:trace-component-2} back into \eqref{eqn:log-based-residual-norm-deriv-xi} 
	yields the result, upon simplification to commutator form.
\end{proof}

\appendix

\printbibliography

\end{document}